\crefname{equation}{}{}
\numberwithin{equation}{section}
\newtheorem{theorem}{Theorem}[section]
\newtheorem{proposition}[theorem]{Proposition}
\newtheorem{lemma}[theorem]{Lemma}
\newtheorem{corollary}[theorem]{Corollary}
\newtheorem*{question*}{Question} \Crefname{question}{Question}{Questions}
\theoremstyle{definition}
\newtheorem{definition}[theorem]{Definition}
\newtheorem{remark}[theorem]{Remark}
\newtheorem*{example*}{Example}
\newtheorem{hypothesis}[theorem]{Hypothesis}
\newcommand{\nc}{\newcommand}
\nc{\on}{\operatorname}
\newcommand{\one}{\mathbbm{1}}
\newcommand\numberthis{\addtocounter{equation}{1}\tag{\theequation}}
\nc{\fp}{\mathfrak{p}}
\nc{\fq}{\mathfrak{q}}
\nc{\fa}{\mathfrak{a}}
\nc{\fb}{\mathfrak{b}}
\nc{\fc}{\mathfrak{c}}
\nc{\fd}{\mathfrak{d}}
\nc{\fe}{\mathfrak{e}}
\nc{\ff}{\mathfrak{f}}
\nc{\cA}{\mathcal{A}}
\nc{\cP}{\mathcal{P}}
\nc{\cL}{\mathcal{L}}
\nc{\cR}{\mathcal{R}}
\nc{\cI}{\mathcal{I}}
\nc{\mc}{\mathcal}
\newcommand{\QQ}{\mathbb{Q}}
\newcommand{\RR}{\mathbb{R}}
\newcommand{\NN}{\mathbb{N}}
\newcommand{\ZZ}{\mathbb{Z}}
\newcommand{\Gal}{\operatorname{Gal}}
\newcommand{\eps}{\varepsilon}
\title{Primes with Beatty and Chebotarev conditions}
\author[Ji]{Caleb Ji}
\address{Washington University in St. Louis, St. Louis, MO 63130}
\email{caleb.ji@wustl.edu}
\author[Kazdan]{Joshua Kazdan}
\address{Stanford University, Stanford, CA 94305, USA}
\email{jkazdan@stanford.edu}
\author[McDonald]{Vaughan McDonald}
\address{Harvard University, Cambridge, MA 02138, USA}
\email{vmcdonald@college.harvard.edu}
\date{\today}
\begin{document}

\maketitle

\begin{abstract}
    We study the prime numbers that lie in Beatty sequences of the form $\lfloor \alpha n + \beta \rfloor$ and have prescribed algebraic splitting conditions. We prove that the density of primes in both a fixed Beatty sequence with $\alpha$ of finite type and a Chebotarev class of some Galois extension is precisely the product of the densities $\alpha^{-1}\cdot\frac{|C|}{|G|}$.  Moreover, we show that the primes in the intersection of these sets satisfy a Bombieri--Vinogradov type theorem.  This allows us to prove the existence of bounded gaps for such primes. As a final application, we prove a common generalization of the aforementioned bounded gaps result and the Green--Tao theorem.
\end{abstract}

\section{Introduction and Statement of Results}
Primes in a fixed arithmetic progression $\{a + qn\}_{n = 1}^{\infty}$ constitute a key object of interest in classical analytic number theory. The prime number theorem for arithmetic progressions computes the density of such primes.  We will discuss two natural generalizations of arithmetic progressions, one via an algebraic property and the other via an analytic one, and study primes constrained simultaneously by these conditions.

One way to generalize arithmetic progressions is by replacing the common integer difference $q$ and the constant term $a$ with arbitrary real constants $\alpha$ and $\beta$ respectively. We then consider the sequence $\{\lfloor\alpha n + \beta\rfloor\}$, which is known as a \emph{Beatty sequence}.
In an earlier paper, Banks and Shparlinski \cite{BS09} examine the density of primes that lie in certain Beatty sequences.  Another way to generalize arithmetic progressions is to note that reducing a prime$\pmod q$ is equivalent to taking its Chebotarev class in the cyclotomic field $\QQ(\zeta_q)$. Generalizing to any Galois extension $L/\QQ$, the Chebotarev density theorem computes the proportion of primes $p$ not dividing $\Delta_L$ that are contained in a given Chebotarev class.  

This paper studies the  primes $p$ that satisfy both conditions. Roughly speaking, our first theorem will show that within the primes, the property of lying in a Beatty sequence and the property of having a fixed Artin symbol in a Galois extension $L/\QQ$ are independent.  We first recall some relevant notions.
The natural density of a class of primes $\mathcal{S}\subset \mathbb{P}$ is given by 

\[ 
\delta(\mc{S}) \coloneqq \lim_{X\rightarrow\infty}\dfrac{\# \{ p\le X : p\in \mathcal{S}\}}{\#\{ p\le X : p\in \mathbb{P} \}},
\]
where $\mathbb{P}$ denotes the set of all primes.  Note that this limit does not exist for all sets $\mathcal{S}$. Given a Galois extension $L/\QQ$ of degree $d$ with Galois group $G$ and a prime $p \nmid \Delta_L$, let the \emph{Artin symbol} $\Big[\frac{L/\QQ}{p}\Big]$ be the conjugacy class of $\on{Frob}_{\mathfrak{p}}$ for any prime $\mathfrak{p} \subseteq \mc{O}_L$ lying above $(p)$.  
Let $C\subset G$ be a conjugacy class, and let $\cP_C$ be the set of primes $p\nmid\Delta_L$ such that $\Big[\frac{L/\QQ}{p}\Big]=C$.
The Chebotarev density theorem states that
\[\delta(\mc{P}_C)=\dfrac{|C|}{|G|}.\]


In essence, the probability a randomly chosen prime number has a fixed Artin symbol $C$ is $|C|/|G|$. Throughout this paper, we call set of primes with Artin symbols in a fixed subset of conjugacy classes in a given Galois extension $L/\QQ$ a \emph{Chebotarev set}. For the sake of this paper, however, all Chebotarev sets will contain primes with a fixed Artin symbol.

On the other hand, we also consider Beatty sequences of the form
\[
\mathcal{B}_{\alpha,\beta} =  \{\lfloor \alpha n + \beta \rfloor: n \in \NN\},
\] 
where $\alpha$ is an irrational number and $\beta$ is an arbitrary real number. We will require conditions on the \emph{type} $\tau$ of $\alpha$, which is given by
\[\tau = \sup \{ \rho : \liminf_{\substack{ n\rightarrow \infty \\ n\in \mathbb{N}}} n^\rho \lVert \alpha n \rVert=0\},\]
where $\lVert \cdot \rVert$ denotes the distance to the nearest integer.  Our results concerning primes in Beatty sequences rely on the condition that $\alpha$ is of finite type.  Most real numbers of interest have finite type, including $\pi$, $e$, and every algebraic number.  Let $\cP_{\alpha, \beta}$ be the set of primes appearing in the Beatty sequence $\lfloor \alpha n + \beta\rfloor$.
Banks and Shparlinski \cite{BS09} proved that if $\alpha$ has finite type, then  
\[
\delta(\cP_{\alpha, \beta}) = \dfrac{1}{\alpha},
\]
consistent with the observation that such a Beatty sequence will contain roughly $1/\alpha$ of all the natural numbers. In fact, they also proved that arithmetic progressions and Beatty sequences are independent: the density of primes in a fixed arithmetic progression and Beatty sequence is the product $\frac{1}{\varphi(q)\alpha}$.

We prove a common generalization of the results of Banks and Shparlinski and the Chebotarev density theorem. To be more precise, we wish to determine the density of the set 
\begin{equation}
\label{eq: Pcab}
    \cP_{C,\alpha,\beta} \coloneqq \cP_{\alpha,\beta} \cap \mathcal{P}_C.
\end{equation}

\begin{theorem}
\label{thm:chebeatty}
Let $L/\QQ$ be a finite Galois extension and $C\subset G=\Gal(L/\QQ)$ be a conjugacy class of its Galois group.  Suppose $\alpha$ is positive, irrational, and of finite type and $\beta$ is any real number. Then the density of the set $\cP_{C,\alpha,\beta}$ in the primes is
\begin{equation}
    \delta(\mathcal{P}_{C,\alpha,\beta}) = \delta(\cP_{\alpha, \beta}) \cdot \delta(\mathcal{P}_C) = \frac{1}{\alpha}\cdot \frac{|C|}{|G|}.
\end{equation}
\end{theorem}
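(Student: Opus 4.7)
My plan is to extend the Banks--Shparlinski approach by combining the Beatty characterization via the sawtooth function with the orthogonality of characters of $G$, reducing everything to bounds on exponential sums over primes twisted by Artin characters. The starting point is the standard equivalence
\[
p \in \mc{B}_{\alpha,\beta} \iff \{-\alpha^{-1}(p - \beta + 1)\} \in [0, \alpha^{-1}),
\]
which expresses the Beatty indicator in terms of $\psi(x) = x - \lfloor x \rfloor - \tfrac{1}{2}$. I would apply Vaaler's trigonometric approximation to $\psi$ with a truncation parameter $H$, obtaining an expansion of the Beatty indicator as the constant $\alpha^{-1}$ plus a trigonometric polynomial of length $H$ with coefficients $O(1/|h|)$, plus an error of $L^1$-size $O(1/H)$.

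Substituting into $\pi_{C,\alpha,\beta}(X) = \#\{p \le X : p \in \mc{P}_{C,\alpha,\beta}\}$ yields
\[
\pi_{C,\alpha,\beta}(X) = \alpha^{-1} \pi_C(X) + \sum_{1 \le |h| \le H} c_h \!\!\!\sum_{\substack{p \le X \\ p \in \mc{P}_C}} e(\alpha^{-1} h p) + O\!\paren{\tfrac{\pi(X)}{H}},
\]
where the first term, by the Chebotarev density theorem, is $\tfrac{1}{\alpha}\cdot\tfrac{|C|}{|G|}\pi(X)(1+o(1))$, exactly the claimed density. The remaining task is to show that each inner sum is $o(\pi(X))$ uniformly for $|h| \le H = X^{c}$ with some $c > 0$, so that choosing $H$ appropriately makes both the truncation error and the Fourier error negligible relative to $\pi(X)$.

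To handle the Chebotarev condition analytically, I would decompose it via irreducible characters of $G$:
\[
\one_{\mathrm{Frob}_p \in C} = \frac{|C|}{|G|} \sum_{\chi \in \widehat{G}} \overline{\chi(C)}\, \chi(\mathrm{Frob}_p),
\]
so that the inner sum becomes a linear combination of $S_\chi(h,X) := \sum_{p \le X} \chi(\mathrm{Frob}_p) e(\alpha^{-1} h p)$ over irreducible $\chi$. For $\chi$ trivial, $S_\chi$ is the exponential sum over all primes handled by Banks--Shparlinski using Vinogradov's classical bound and the finite-type hypothesis on $\alpha$. For a nontrivial $\chi$, I would apply Vaughan's identity (or Heath--Brown's identity) to the coefficients of the associated Artin $L$-function $L(s,\chi)$, splitting $S_\chi$ into type I and type II bilinear sums. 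Type I sums reduce to Weyl-type sums $\sum_n e(\alpha^{-1} h n)$ weighted by divisor-like factors, controlled by the finite type of $\alpha$; type II sums are handled by Cauchy--Schwarz combined with Dirichlet's approximation theorem, exactly as in the Vinogradov treatment of $\sum_p e(\lambda p)$, with the character values $\chi(\mathrm{Frob}_n)$ playing a passive role bounded by $\snorm{\chi}_\infty = O(1)$.

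The main obstacle is ensuring that the Vinogradov--Vaughan machinery goes through uniformly in the Artin character $\chi$: one needs the combinatorial identity to apply to the multiplicative coefficients $\chi(\mathrm{Frob}_n)$ (which follows from the Euler product of $L(s,\chi)$), and one needs the type I estimates to interact cleanly with $\chi$, which requires an effective Chebotarev-type input (e.g., a Siegel--Walfisz analogue for Frobenius classes) strong enough to absorb the $\log$ losses introduced by summing $c_h$ over $|h| \le H$. Once those technical points are verified, combining the bounds on $S_\chi(h,X)$ for all $\chi$, summing over $|h| \le H$ against $c_h = O(1/|h|)$, and letting $H$ and $X$ tend to infinity at compatible rates gives $\pi_{C,\alpha,\beta}(X) = \tfrac{1}{\alpha}\cdot\tfrac{|C|}{|G|}\pi(X)(1+o(1))$, which is the theorem.
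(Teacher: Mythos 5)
Your high-level framework matches the paper's: write the Beatty condition via the sawtooth function, smooth it (the paper uses Vinogradov's $\psi_\Delta$ where you use Vaaler, but these are interchangeable), expand in Fourier series, and reduce to bounding exponential sums over Chebotarev primes $\sum_{p \le X} \one_{\cP_C}(p)\log(p) e(\gamma k p)$. Up to that point you are on track.

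The gap is in how you propose to estimate those sums. You decompose $\one_C$ via the orthogonality of irreducible characters of $G$ and then claim to run Vaughan's identity on the coefficients of $L(s,\chi)$ ``with the character values $\chi(\mathrm{Frob}_n)$ playing a passive role bounded by $\snorm\chi_\infty$.'' For a non-abelian irreducible $\chi$ this step does not make sense: there is no completely multiplicative function $n\mapsto\chi(\mathrm{Frob}_n)$ on the integers (only the Dirichlet coefficients $a_\chi(n)$ of $L(s,\chi)$, which are \emph{not} of the form $\chi(n)\Lambda(n)$ for any Dirichlet-type $\chi$), so you cannot absorb $\chi$ into the bilinear structure the way you can for an abelian twist. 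Vaughan's identity requires a multiplicative twist or a decomposition at the level of ideals, and neither is available directly for a general Artin character of a non-abelian $G$. This is precisely why the paper does \emph{not} use orthogonality over $\widehat{G}$. Instead, following Kane, it picks $\sigma\in C$, passes to the intermediate field $K = L^{\langle\sigma\rangle}$ (so $L/K$ is \emph{abelian} with group $\langle\sigma\rangle$), and uses orthogonality over characters $\xi$ of the cyclic group $\langle\sigma\rangle$; these are Hecke (ray class) characters of $K$ by class field theory. Vaughan's identity is then applied to the number-field von Mangoldt function $\Lambda_K$, and the resulting unweighted sums are over \emph{ideals} of $K$, not over integers; bounding them requires a geometry-of-numbers argument (the paper's Proposition 5.8, after Kane's Lemma 21) that has no analogue in the classical Vinogradov treatment of $\sum_p e(\lambda p)$. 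These two ingredients --- the abelianization through $L^{\langle\sigma\rangle}$, and the lattice-point/ideal-norm exponential sum estimate --- are the heart of the exponential sum bound and are missing from your proposal. Without them your ``type I/II'' split cannot be carried out.

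A secondary, repairable, issue: you would also need to track $q$-dependence if you want the proof to feed into the later Bombieri--Vinogradov argument, which the paper sets up from the start in Theorem~\ref{thm: q-uniform PNT BEattyIntCheb}; for the density statement alone you are right that this is unnecessary.
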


The following example gives an explicit computational suggestion of our results.

\begin{example*} Consider the Beatty sequence $\mc{B}_{\pi,0}=\lfloor \pi n\rfloor$ and let $L$ be the splitting field of $x^3-2$ over $\QQ$. $L/\QQ$ is a Galois extension with $\on{Gal}(L/\QQ) \cong S_3$ and discriminant $\Delta_L = -2^4\cdot 3^7$.  For $g\in S_3$, let $C_{(g)}$ denote the conjugacy class of $g$. Then every prime $2,3$ has Artin symbol equal to one of $C_{(e)}, C_{(12)}, C_{(123)}$, with densities $\frac{1}{6}, \frac{1}{2}, \frac{1}{3}$, respectively. \cref{thm:chebeatty} shows the densities of the intersections with $\lfloor\pi n \rfloor$ are $\frac{1}{6 \pi}, \frac{1}{2 \pi}, \frac{1}{3 \pi}$ respectively.   Table 1 lists the proportion of the first $n$ primes (not including $2, 3$) of the form $\lfloor\pi n\rfloor$, with a certain conjugacy class of $\Gal(K/\QQ)$, and with both of these properties. 
Define
\[
\delta(S,X) = \frac{\#\{p \le X: p \in S\}}{\#\{p \le X\}}.
\]

\begin{table}[H]
\label{table: exampletable}
\begin{tabular}{|l|l|l|l|l|l|l|l|}
\hline
$X$ & $\delta(\mc{P}_{\pi},X)$ & $\delta(\mc{P}_{C_{e}},X)$ & $\delta(\cP_{C_{(12)}},X)$ & $\delta(\cP_{C_{(123)}},X)$ & $\delta(\cP_{C_{(e)},\pi},X)$ & $\delta(\cP_{C_{(12)},\pi},X)$ & $\delta(\cP_{C_{(123)},\pi},X)$ \\ \hline
100 & 0.35 & 0.15 & 0.52 & 0.33 & 0.10 & 0.18 & 0.07 \\ \hline
1000 & 0.328 & 0.157 & 0.508 & 0.335 & 0.052 & 0.166 & 0.110 \\ \hline
10000 & 0.3215 & 0.1635 & 0.5011 & 0.3544 & 0.0505 & 0.1605 & 0.1105 \\ \hline
$\vdots$ & $\vdots$ & $\vdots$ & $\vdots$ & $\vdots$ & $\vdots$ & $\vdots$ & $\vdots$ \\ \hline
$\infty$ & 0.31831\dots & 0.16666\dots & 0.50000\dots & 0.33333\dots & 0.05305\dots & 0.15915\dots & 0.10610\dots \\ \hline
\end{tabular}
\vspace{0.3cm}
\caption{Proportion of the first $n$ primes in certain classes}
\end{table}
\end{example*}

In order to obtain a bounded gaps theorem for sets of primes of the form $\mathcal{P}_{C, \alpha, \beta}$, we first need to prove an analogue of the Bombieri-Vinogradov theorem.  We adapt work of Kane~\cite{K13} to prove such a Bombieri-Vinogradov statement for the primes $\cP_{C,\alpha,\beta}$.  

Fix a finite Galois extension $L/\QQ$, a conjugacy class of its Galois group $C\subset G$, and $(a,q) =1$. Then define 
\[
\pi_C(X;q,a) := \#\Big\{p \le X:p \nmid \Delta_L, \Big[\frac{L/\QQ}{p} \Big] = C, p \equiv a \mod q \Big\}.
\] If we further specify irrationals $\alpha, \beta$, let 
\begin{equation}
\pi_{C,\alpha, \beta}(X;q,a) := \#\Big\{p\le X:p \nmid \Delta_L, \Big[\frac{L/\QQ}{p} \Big] = C, p \equiv a \mod q, p \in \mc{B}_{\alpha,\beta}\Big\}.
\end{equation} For future convenience, also define $\pi_{C,\alpha,\beta}(X) :=\# \Big\{ p\leq X: p\nmid \Delta_L, \Big[ \frac{L/\QQ}{p}\Big]=C , p \in \mc{B}_{\alpha,\beta}\}$.
Then we have the following theorem.

\begin{theorem}
\label{thm: chebeattyBV}
Suppose $L/\QQ$ is a Galois extension with a conjugacy class $C$ and $\alpha,\beta$ are real numbers with $\alpha$ irrational of finite type.
Then there exists some positive level of distribution $\theta_{\cP_{C,\alpha,\beta}} > 0$ (see \eqref{eq: LevelOfDistribution} for the exact formula) such that for any $0 < \theta < \theta_{\cP_{C,\alpha,\beta}}$ we have 
\[
\sideset{}{'}\sum_{q\le x^\theta}\max_{\substack{y \le x\\(a, q) = 1}}\Big|\pi_{C, \alpha, \beta}(y; q, a) - \dfrac{|C|}{\alpha\varphi(q)|G|}\pi(y)\Big|\ll_A \dfrac{x}{(\log x)^A}
\]
for any $A>0$, where $\sideset{}{'}\sum$ is a sum over $q\le x^\theta$ such that $K\cap\QQ(\zeta_q)=\QQ$.
\end{theorem}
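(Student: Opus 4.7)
The plan is to detect Beatty-sequence membership by Fourier expansion, thereby reducing the theorem to two ingredients: (i) a Bombieri--Vinogradov-type estimate for primes lying in a Chebotarev class \emph{and} an arithmetic progression, obtained by adapting Kane~\cite{K13}; and (ii) a bound, on average over $q$, for the exponential sums
\[
S_h(y;q,a) := \sum_{\substack{p\le y \\ p\equiv a \,\mathrm{mod}\, q \\ [L/\QQ,\,p]=C}} e(h\alpha^{-1}p).
\]
The finite-type hypothesis on $\alpha$ enters only through these oscillatory sums, while the hypothesis $L\cap\QQ(\zeta_q)=\QQ$ ensures that the characters of $\Gal(L/\QQ)$ and of $(\ZZ/q\ZZ)^\times$ combine multiplicatively in the compositum $L(\zeta_q)$, so that Chebotarev and congruence detection can be carried out independently by character orthogonality.

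First I would approximate the indicator function of $\mc{B}_{\alpha,\beta}$. Standard manipulation writes $\one_{\mc{B}_{\alpha,\beta}}(m)=\one_{I}\!\left(\{\alpha^{-1}(m-\beta+1)\}\right)$ for the interval $I=(0,\alpha^{-1}]$, and Vaaler's trigonometric approximation yields, for any $H\ge 1$,
\[
\one_{\mc{B}_{\alpha,\beta}}(m) = \frac{1}{\alpha} + \sum_{0<|h|\le H} c_h\, e(h\alpha^{-1}(m-\beta+1)) + E_H(m),
\]
where $|c_h|\ll \min(\alpha^{-1},|h|^{-1})$ and $E_H$ has $L^1$-average $\ll 1/H$ after sandwiching by Beurling--Selberg majorants. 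Substituting this into $\pi_{C,\alpha,\beta}(y;q,a)$ produces a main-term contribution $\alpha^{-1}\pi_C(y;q,a)$, a linear combination of the sums $S_h(y;q,a)$ with coefficients $c_h\,e(h\alpha^{-1}(1-\beta))$, and a total remainder of size $\ll x/H$ after triangle-inequality bookkeeping.

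The main term is handled by the Chebotarev version of Bombieri--Vinogradov: under the primed-sum hypothesis, for some fixed $\theta_0>0$,
\[
\sideset{}{'}\sum_{q\le x^{\theta_0}}\max_{\substack{y\le x\\(a,q)=1}}\Bigl|\pi_C(y;q,a)-\dfrac{|C|}{|G|\varphi(q)}\pi(y)\Bigr| \ll_A \dfrac{x}{(\log x)^A},
\]
which is obtained by the method of Kane~\cite{K13}. The crux of the proof is then to show that, for the same (or a smaller) $\theta>0$, the oscillatory contribution
\[
\sideset{}{'}\sum_{q\le x^\theta}\max_{\substack{y\le x\\(a,q)=1}} \Bigl|\sum_{0<|h|\le H} c_h\, e(h\alpha^{-1}(1-\beta))\, S_h(y;q,a)\Bigr| \ll_A \dfrac{x}{(\log x)^A}.
\]
For each fixed $h$, I would expand the Chebotarev condition by orthogonality of irreducible characters of $\Gal(L(\zeta_q)/\QQ)$ and the congruence condition by Dirichlet characters modulo $q$; the disjointness hypothesis decouples these into sums over pairs $(\chi,\rho)$ of a Dirichlet character $\chi$ and an irreducible Galois representation $\rho$. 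Each resulting prime sum is dissected by a Vaughan (or Heath--Brown) identity into type-I and type-II pieces, the type-II pieces being estimated by Cauchy--Schwarz and the large sieve for Dirichlet characters; the purely additive sum $\sum_{n\le N} e(h\alpha^{-1}n)\ll \min(N,\|h\alpha^{-1}\|^{-1})$ combined with the finite-type bound $\|h\alpha^{-1}\|^{-1}\ll |h|^{\tau+\varepsilon}$ controls the dependence on $h$.

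The main obstacle is executing this last step with uniformity in $q$, $h$, $\chi$, and $\rho$ simultaneously, so that after summing over $0<|h|\le H$ the total contribution still saves arbitrarily many logarithms. The explicit level of distribution $\theta_{\cP_{C,\alpha,\beta}}$ referenced in \eqref{eq: LevelOfDistribution} will emerge from jointly optimizing $H$ against the Vaughan cutoffs and against the $|h|^{\tau+\varepsilon}$ loss from the finite-type bound, so that a smaller type for $\alpha$ permits a larger $\theta$. All other inputs — Vaaler approximation, Chebotarev Bombieri--Vinogradov, and Vaughan's identity — are by now standard in this subject, and the novelty lies in deploying them simultaneously with the required uniformity in both $q$ and $h$.
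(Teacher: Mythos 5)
Your high-level plan — detect Beatty membership by trigonometric approximation, handle the Beatty main term by a Chebotarev Bombieri--Vinogradov input, and estimate the oscillatory (exponential-sum) contribution — matches the paper's strategy in outline, but the execution diverges at the crucial step and leaves a genuine gap.

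The paper does not take a ``sum over $q$, exploit cancellation among moduli and characters'' route for the oscillatory piece. Instead, it proves a \emph{pointwise-in-$q$, power-saving-in-$x$} prime number theorem (Theorem~\ref{thm: q-uniform PNT BEattyIntCheb} / Theorem~\ref{thm: q-effective PNTBeattyIntCheb}):
\[
\pi_{C,\alpha,\beta}(y;q,a) = \frac{1}{\alpha}\,\pi_C(y;q,a) + O_L\!\bigl(q^{d+1}y^{1-\kappa}\bigr),
\]
with an explicit $\kappa=10^{-d}/(125d\tau)>0$. This is obtained by $q$-twisting Kane's Vaughan-identity bounds on
$\sum_{p\le X, p\equiv a\ (q)}\one_{\cP_C}(p)\log p\,e(\alpha p)$
so that the $q$-dependence is an explicit polynomial factor $q^{d+1}$. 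Once that is available, the Bombieri--Vinogradov statement is almost immediate: by the triangle inequality the left side of the theorem splits into $\alpha^{-1}$ times the Chebotarev Bombieri--Vinogradov sum (which is Murty--Murty \cite{MM87}, not something ``obtained by the method of Kane'') and a \emph{trivially summed} error $\sum_{q\le x^\theta} q^{d+1}x^{1-\kappa}$, which stays $o(x/(\log x)^A)$ precisely because $\theta<\kappa/(d+1)$. There is no large-sieve averaging over moduli for the Beatty piece, and none is needed: the power saving in $x$ absorbs a polynomial $q$-range.

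By contrast, your proposal treats the oscillatory contribution as something to be estimated \emph{on average} over $q$ via Dirichlet/Artin character decomposition, Cauchy--Schwarz, and the large sieve. That is a far heavier approach, and you yourself identify its crux --- executing the type-I/type-II dissection ``with uniformity in $q$, $h$, $\chi$, and $\rho$ simultaneously'' --- without saying how it would be done. That uniformity is exactly where the work lies, and leaving it as the ``main obstacle'' means the proposal has a real gap. Moreover, the additive twist $e(h\alpha^{-1}n)$ rides along with the Dirichlet characters in a way that prevents the classical large sieve for characters from applying cleanly to the type-II bilinear forms; untangling this would require precisely the kind of $q$-uniform Kane-style exponential-sum analysis the paper develops. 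In short: you have the right ingredients (Vaaler/Vinogradov approximation, Chebotarev Bombieri--Vinogradov, Vaughan's identity, finite-type discrepancy), but you miss the organizing observation that the Beatty error only needs a \emph{pointwise} power-saving bound polynomial in $q$, after which no cancellation over moduli is required, and you misattribute the Chebotarev Bombieri--Vinogradov input to Kane rather than to Murty--Murty.
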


Building on the work of Goldston, Pintz, and Yildirim, \cite{GPY09}, Zhang \cite{Z14} proved the existence of infinitely many pairs of primes whose difference is at most $70\cdot 10^6$ .  By developing a suitable $k$-dimensional analogue of the Selberg sieve, Maynard \cite{M15} used the Bombieri--Vinogradov theorem to reduce the upper bound on the gaps to just over $600$, and additionally proved that 
\[
\liminf_{n\rightarrow\infty}(p_{n+m}-p_n) \ll m^3e^{4m}.
\] 
(Tao independently developed the underlying sieve but came to slightly different conclusions.) 

Several authors have extended Maynard's work to study gaps between primes in special subsets, including Beatty sequences \cite{BZ16} and Chebotarev sets \cite{T14}.  We are interested in proving bounded gaps for primes lying in $\cP_{C,\alpha,\beta}$. That is there is a constant $D$ such that there are infinitely many pairs of distinct primes $q_1,q_2$ in $\cP_{C,a,b}$ such that $|q_1 - q_2| \le D$. Using the method of Maynard--Tao \cite{M15} and its extension by Thorner \cite{T14} to Chebotarev primes,
the positive level of distribution proven to exist in Theorem~\ref{thm: chebeattyBV} gives us the following bounded gaps statement.
\begin{theorem}
\label{thm: Bounded gaps}
Let $\alpha$ be a positive real number with type $\tau<\infty$.  Let $q_i$ be the $i$--th prime in $\mc{P}_{C, \alpha, \beta}$, where $\mc{P}_{C, \alpha, \beta}$ was defined in \eqref{eq: Pcab}.  For any $\sigma\in C$, let $L^{\langle\sigma\rangle}$ be the fixed field of $\sigma$.  Then there exists an effective constant $D>0$ (depending only on $\tau$ and $[L^{\langle\sigma\rangle}: \QQ]$) such that

\[
\liminf_{n\rightarrow\infty}(q_{n+m}-q_n) \le mD\Delta_L\exp(mD\Delta_L).
\]
\end{theorem}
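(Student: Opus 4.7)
The overall plan is to run the Maynard--Tao higher-rank Selberg sieve, in the form adapted by Thorner~\cite{T14} for Chebotarev primes, using \Cref{thm: chebeattyBV} as the required Bombieri--Vinogradov input. Fix an admissible $k$-tuple $\mc{H} = (h_1, \dots, h_k)$, and consider sums of the form
\[
S_1 = \sum_{\substack{N \le n < 2N \\ n \equiv v_0 \pmod W}} \left(\sum_{d_i \mid n + h_i} \lambda_{d_1, \dots, d_k}\right)^{\!2}, \qquad S_2 = \sum_{i=1}^k \sum_{\substack{N \le n < 2N \\ n \equiv v_0 \pmod W}} \one_{n + h_i \in \cP_{C,\alpha,\beta}} \left(\sum_{d_i \mid n + h_i} \lambda_{d_1, \dots, d_k}\right)^{\!2},
\]
where $W = \prod_{p \le D_0} p$ and $v_0$ is a residue class chosen so that each $n+h_i$ is coprime to $W$ and, crucially, so that the Chebotarev condition $[L/\QQ, n+h_i] = C$ is guaranteed whenever $n+h_i$ is prime. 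If $S_2 > m\, S_1$, then infinitely many $n$ yield at least $m+1$ of the $n+h_i$ in $\cP_{C, \alpha, \beta}$, giving the bounded gaps conclusion.

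The first step is to encode the Chebotarev condition as a congruence. Following Thorner, pick $\sigma \in C$ and work with the abelian subextension controlling the splitting behavior, so that the condition $[L/\QQ, p] = C$ for primes $p$ in a fixed residue class can be pinned down by finitely many congruence classes modulo $\Delta_L$; we then adjust $W$ to be coprime to $\Delta_L$ and restrict $v_0$ so that the joint condition (Chebotarev for each $n+h_i$) becomes a non-empty union of residue classes mod $W \Delta_L$. The size of the local density factors is controlled by $|C|/|G|$ and the degree $[L^{\langle \sigma \rangle} : \QQ]$, exactly as in Thorner's work. The Beatty condition is orthogonal at this level: it will not restrict the residue class of $n$, but will enter the asymptotic for $S_2$ through a density factor $1/\alpha$.

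With this setup, $S_1$ is evaluated by the standard Selberg argument with smooth weights $F$ supported on the Maynard simplex, producing an asymptotic of the form $\frac{N (\log R)^k}{W} \cdot I_k(F)(1 + o(1))$ times an Euler-type constant depending on $\Delta_L$ and $W$. To evaluate $S_2$, I replace the indicator of $\cP_{C,\alpha,\beta}$ by the counting function controlled by \Cref{thm: chebeattyBV}: the inner sum over $n+h_i \equiv v_0 + h_i \pmod{Wq}$ is well-approximated by $\frac{|C|}{\alpha\, |G|\, \varphi(Wq)} \pi(N)$ on average over $q \le N^{\theta/2}$, with acceptable error. This produces the parallel asymptotic $\frac{N (\log R)^{k+1}}{W \log N} \cdot \frac{|C|}{\alpha |G|} \cdot \sum_{j=1}^k J_k^{(j)}(F)(1+o(1))$, up to the same Euler constant.

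Taking the ratio, the density factors $1/\alpha$ and $|C|/|G|$ survive as an overall constant but do not affect the optimization of $\sum_j J_k^{(j)}(F) / I_k(F)$. Setting $R = N^{\theta/2 - \eps}$ with $\theta$ any admissible level from \eqref{eq: LevelOfDistribution}, the ratio $S_2/S_1$ exceeds $m$ provided
\[
k \gtrsim \frac{m |G|}{|C|} \cdot \alpha \cdot \theta^{-1} \cdot \exp\!\bigl(C m |G|/(|C|\theta)\bigr),
\]
by the combinatorial lemma of Maynard~\cite{M15}. Choosing a prime-respecting admissible tuple of diameter $\ll k \log k$ inside the joint residue class (possible since the Chebotarev and $W$-coprimality conditions are compatible with $k$ shifts in a progression), and tracking $\theta^{-1}$'s dependence on $\tau$ and $[L^{\langle\sigma\rangle}:\QQ]$ from \Cref{thm: chebeattyBV}, yields the stated gap $mD\Delta_L \exp(mD\Delta_L)$. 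The main technical obstacle is ensuring that the admissible tuple can be inserted compatibly with the Chebotarev congruences while keeping its diameter controlled, since the number of excluded residue classes grows with $\Delta_L$; this is where the explicit dependence of $D$ on $\Delta_L$ and $[L^{\langle\sigma\rangle}:\QQ]$ is forced.
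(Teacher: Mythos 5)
Your high-level plan---run the Maynard--Tao sieve in Thorner's form with \cref{thm: chebeattyBV} as the Bombieri--Vinogradov input, and choose a prime-shifted admissible tuple of diameter $\ll k\log k$---matches the paper's strategy, and your description of the $S_1/S_2$ asymptotics and the $R = N^{\theta/2 - \delta}$ choice is consistent with what the sieve produces. The paper does not rerun the $S_1/S_2$ machinery directly, but instead verifies Hypothesis~\ref{hypothesis: nice} (with $\cA = \ZZ$, $B = \Delta_L$, $\cP = \cP_{C,\alpha,\beta}$) and invokes Maynard's packaged dense-clusters theorem (\cref{thm: Maynard dense clusters}); that is an equivalent but more economical route.

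However, there is a genuine conceptual error in your setup. You propose to ``restrict $v_0$ so that the joint condition (Chebotarev for each $n+h_i$) becomes a non-empty union of residue classes mod $W\Delta_L$.'' This is impossible when $L/\QQ$ is non-abelian: the set of primes with a prescribed Artin symbol $C$ is \emph{not} a union of arithmetic progressions mod $\Delta_L$ (or mod any integer), precisely because Frobenius is only well-defined up to conjugacy and the Artin map into a non-abelian $G$ does not factor through $(\ZZ/N\ZZ)^*$. One can at best pin down the image of Frobenius in $G^{\mathrm{ab}}$ by a congruence, which is necessary but far from sufficient to land in $C$. This is exactly why the paper (following Thorner and Murty--Murty) keeps the Chebotarev condition on the $\cP$ side and pays for it with the restricted Bombieri--Vinogradov theorem (the $\sideset{}{'}\sum$ over $q$ with $L\cap\QQ(\zeta_q) = \QQ$, which is where $B = \Delta_L$ enters Hypothesis~\ref{hypothesis: nice}). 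If you drop the congruence-encoding step and instead let $\cP_{C,\alpha,\beta}$ play the role of $\cP$ in the sieve, with the only restriction on $v_0$ being the usual $W$-coprimality, your argument aligns with the paper's.

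Relatedly, your explanation of the $\Delta_L$ dependence (``the number of excluded residue classes grows with $\Delta_L$'') is off. In the paper the $\Delta_L$ dependence arises from the Euler factor $\varphi(B)/B = \varphi(\Delta_L)/\Delta_L$ in the lower bound for $\frac{1}{k}\frac{\varphi(B)}{B}\sum_{L\in\cL}\frac{\varphi(a_i)}{a_i}\#\cP_{L,\cA}(x)$, which forces $\delta \ge (2\Delta_L)^{-1}$ and hence $k \ge \exp(2mD\Delta_L)$ when translating Maynard's ``$\ge \delta\log k$ primes'' criterion into ``$\ge m+1$ primes''. It is a density loss, not a constraint on the admissible tuple.
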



As another application, we can combine this Maynard--Tao framework with Green--Tao theorem \cite{GT08}.  Note that the original Green--Tao theorem automatically holds for all positive density subsets of the primes.  Using work of Pintz \cite{P10} and its extension to Chebotarev sets by \cite{VW17}, our next result shows that we can find arbitrarily long arithmetic progressions within the primes in $\mc{P}_{C,\alpha,\beta}$ that are within a given distance from other primes also in $\mc{P}_{C, \alpha, \beta}$.  
\begin{corollary}\label{corollary: GreenTaoMain}
Let $L/\QQ$ be a finite Galois extension and $\mc{B}_{\alpha,\beta}$ be a Beatty sequence with $\alpha$ irrational with finite type.
Let $m$ be a positive integer and $\mathcal{H} = \{h_1,\dots, h_k\}$ be an admissible set, and suppose that 
\[
k \gg \exp\Big(\frac{2\alpha |G|\Delta_Lm}{|C|\theta \varphi(\Delta_L)}\Big),
\]
where there is an implied absolute constant. Then there is an $(m+1)$-element subset $\mathcal{H}' \subseteq \mathcal{H}$ such there exists arbitrarily long arithmetic progressions of primes in the set
\[
\{n: n + h' \in \cP_{C,\alpha,\beta} \text{ for all }h' \in \mc{H}'\}.
\]
\end{corollary}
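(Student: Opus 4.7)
The plan is to follow the Maynard--Tao $+$ Pintz $+$ Green--Tao template developed for the Chebotarev case in \cite{VW17}, using \cref{thm: chebeattyBV} as the Bombieri--Vinogradov input tailored to the set $\mathcal{P}_{C,\alpha,\beta}$.

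First I fix $D_0$ large, set $W = \prod_{p \le D_0} p$, and pick a residue $\nu_0 \bmod W$ so that $(\nu_0 + h_i, W) = 1$ for every $i$. On the progression $n \equiv \nu_0 \pmod W$ I build the $k$-dimensional Maynard--Tao sieve weights $w_n \ge 0$ of \cite{M15}, modified as in \cite{T14} to encode the Chebotarev data. Because \cref{thm: chebeattyBV} supplies a positive level of distribution $\theta$, a direct adaptation of Maynard's main computation evaluates both $S_1 \coloneqq \sum_n w_n$ and $S_2 \coloneqq \sum_n \bigl(\sum_{i=1}^k \mathbf{1}_{\mathcal{P}_{C,\alpha,\beta}}(n+h_i)\bigr) w_n$, yielding a ratio whose main term is of the form
\[
\frac{S_2}{S_1} = \frac{|C|\,\varphi(\Delta_L)\,\theta}{2\alpha\,|G|\,\Delta_L}\,\log k + O(1).
\]
The assumed lower bound on $k$ then forces $S_2/S_1 > m$, so for a robust family of $n \equiv \nu_0 \pmod W$ at least $m+1$ of the translates $n + h_i$ lie in $\mathcal{P}_{C,\alpha,\beta}$.

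Next I invoke Pintz's refinement \cite{P10}: rather than extracting just one $n$, the sieve actually produces $\gg N/(\log N)^{m+1}$ such $n \le N$. Pigeonholing over the $\binom{k}{m+1}$ size-$(m+1)$ subsets of $\mathcal{H}$ selects one fixed $\mathcal{H}' \subseteq \mathcal{H}$ for which the set
\[
S \coloneqq \{ n : n + h' \in \mathcal{P}_{C,\alpha,\beta} \text{ for all } h' \in \mathcal{H}'\}
\]
still satisfies $|S \cap [1,N]| \gg N/(\log N)^{m+1}$. Shifting by any fixed $h_0 \in \mathcal{H}'$, the set $S + h_0$ lies inside $\mathcal{P}_{C,\alpha,\beta}$ and has positive relative density with respect to the Selberg-type pseudorandom majorant of the primes. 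The Green--Tao relative Szemer\'edi theorem \cite{GT08}, in the form used by Pintz \cite{P10} and again by \cite{VW17}, then produces arbitrarily long arithmetic progressions in $S + h_0$; translating back by $-h_0$ delivers the desired APs in $S$.

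The principal obstacle is making the Pintz positive-density argument survive the introduction of the Beatty indicator into the sieve sum. The Chebotarev local factors are handled exactly as in \cite{VW17}, so the real check is that \cref{thm: chebeattyBV} controls the Beatty contribution uniformly both in the modulus $q \le N^\theta$ and in the outer variable $y \le N$. This is precisely what the $\max_{y \le x}$ formulation of \cref{thm: chebeattyBV} delivers, so the main term of $S_2$ inherits its clean factor of $1/\alpha$ without further losses, and the Pintz--Green--Tao machinery then applies essentially off the shelf.
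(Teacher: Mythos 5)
The high-level plan is the right one (Maynard--Tao sieve from \cref{thm: chebeattyBV}, Pintz's criterion, Green--Tao), but two of your intermediate claims are wrong in ways that, read literally, would break the argument.

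First, the density exponent. You assert that the sieve produces $\gg N/(\log N)^{m+1}$ values of $n$ for which $m+1$ of the $n+h_i$ land in $\mathcal P_{C,\alpha,\beta}$. The Maynard--Tao/GPY weights only yield a count of order $N/(\log N)^k$ (after normalizing out the $W$-local factors and the $(\log R)^k$ from the sieve weights); this is exactly what is recorded in \cref{thm: GTsuffmanyelts} and exactly what Pintz's criterion (\cref{thm: GTPintz}) asks for. Your exponent $m+1$ is the Hardy--Littlewood heuristic, not what the sieve delivers.

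Second, and more seriously, the final Green--Tao step. You shift by $h_0$, note $S+h_0 \subseteq \mathcal P_{C,\alpha,\beta}$, and say this set ``has positive relative density with respect to the Selberg-type pseudorandom majorant of the primes.'' It does not: $|S\cap[1,N]| \ll N/(\log N)^k$ while $|\mathcal P_{C,\alpha,\beta}\cap[1,N]| \asymp N/\log N$, so the relative density tends to $0$ and the relative Szemer\'edi theorem applied against a prime-type majorant gives nothing. This is precisely the obstruction Pintz solves: his Theorem~5 (as used in \cref{thm: GTPintz}) requires the extra condition $P^{-}\bigl(\prod_{j}(n+h_j)\bigr) \ge n^{c_1(k)}$ on the set, and it is this condition --- not membership in the primes --- that lets one build a genuinely pseudorandom majorant for the sparse set $S$ with density only $N/(\log N)^k$. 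Your proposal never mentions the $P^{-}$ condition, and without it the relative Szemer\'edi input is unavailable. The paper's version via \cref{proposition: GTLem1} and \cref{thm: GTsuffmanyelts} (adapting Vatwani--Wong) is the correct route: one first shows the sieve produces $\gg x/(\log x)^k$ integers satisfying both the ``$m+1$ translates in $\mathcal P_{C,\alpha,\beta}$'' property and the $P^{-}$-lower-bound, then pigeonholes over the $(m+1)$-element subsets to fix $\mathcal H'$, and only then applies \cref{thm: GTPintz}. You should restore the $P^{-}$ condition as an explicit ingredient and correct the exponent to $k$.
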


Section \ref{pnt_proofs} contains a proof of \cref{thm:chebeatty}, which computes the natural density of $\mathcal{P}_{C,\alpha,\beta}$.  Section \ref{BV Section} contains a proof of \cref{thm: chebeattyBV}, ensuring that the primes in $\mathcal{P}_{C,\alpha,\beta}$ have a positive level of distribution.  Our proof makes use of bounds for certain exponential sums, which we prove following Kane \cite{K13} in Section 5.  In Section 6, we use \cref{thm: chebeattyBV} to prove \cref{thm: Bounded gaps}, establishing an infinitude of bounded gaps between primes in $\mathcal{P}_{C,\alpha,\beta}$.  In Section 7, we prove \cref{corollary: GreenTaoMain}, giving a common generalization of Theorem 1.3 and the work of Green and Tao.

\subsection*{Acknowledgements}
This research was conducted at the Emory University Mathematics Research Experience for Undergraduates, supported by the NSA (grant H98230-19-1-0013) and the NSF (grants 1849959, 1557960). We thank Harvard University, Stanford University, and the Asa Griggs Candler Fund for their support. The authors thank Professor Ken Ono and Jesse Thorner for their guidance and suggestions.

\section{Preliminaries}\label{prelims}

The proof of Theorem~\ref{thm:chebeatty} reduces to proving a version of the prime number theorem for primes lying in both a fixed Chebotarev class and a fixed Beatty sequence. We will prove a more precise statement by constraining these primes to an arithmetic progression $\{a+nq\}$.  Then, we will derive the explicit $q$-dependence in the error term. In Section 4 this $q$-effective error term will be crucial to the proof of our Bombieri--Vinogradov type theorem. 

First we introduce some notation. Given $\sigma \in \on{Gal}(L/\QQ)$, denote by $L^{\langle \sigma \rangle} = \{a \in L: \sigma a = a\}$. Moreover, given a set $X$ and a subset $S \subseteq X$, we denote the indicator function of $S$ by $\one_{S}$.

\begin{theorem}\label{thm: q-uniform PNT BEattyIntCheb}
Let $L/\QQ$ be a finite Galois extension and let $C \subset \on{Gal}(L/\QQ)$ be a conjugacy class.  For any $\sigma\in C$, let $d\coloneqq[L^{\langle\sigma\rangle}: \QQ]$.  Let $\kappa = 10^{-d}/(125d\tau)$ If $(a,q) = 1$, then we have
\[
\sum_{\substack{ p\leq X  \\ p\equiv a \mod q}} \one_{\mathcal{P}_{C,\alpha,\beta}}(p) \log(p) = \frac{1}{\alpha}\sum_{\substack{p\leq X \\ p\equiv a \mod q}} \mathbbm{1}_{\mathcal{P}_C}(p)\log(p) + O_{L,\alpha, \beta}(q^{d+1}X^{1 - \kappa}).
\]
\end{theorem}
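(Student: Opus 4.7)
\medskip

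\noindent\textbf{Proof plan.} The overall strategy is to express the Beatty indicator as a constant plus a sawtooth, replace the sawtooth by a short trigonometric polynomial, and reduce everything to bounding exponential sums of the form $\sum_{p\le X}\mathbbm{1}_{\mathcal{P}_C}(p)e(\gamma p)\log p$ restricted to $p\equiv a\pmod q$.

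First I would recall the standard characterization of Beatty membership: for irrational $\alpha>0$, an integer $m\ge 1$ lies in $\mathcal{B}_{\alpha,\beta}$ iff exactly one integer $n$ satisfies $(m-\beta)/\alpha\le n<(m-\beta+1)/\alpha$. Writing this via the floor function and then isolating the fractional parts gives
\[
\mathbbm{1}_{\mathcal{B}_{\alpha,\beta}}(m)=\tfrac{1}{\alpha}+\psi\!\left(-\tfrac{m-\beta}{\alpha}\right)-\psi\!\left(-\tfrac{m-\beta+1}{\alpha}\right),
\]
where $\psi(x)=\{x\}-\tfrac12$ is the sawtooth. Substituting into the sum on the left-hand side of Theorem \ref{thm: q-uniform PNT BEattyIntCheb} produces the main term $\alpha^{-1}\sum \mathbbm{1}_{\mathcal{P}_C}(p)\log p$ immediately; the remaining contribution is a sum weighted by shifted sawtooths evaluated at $-p/\alpha$.

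Next I would approximate $\psi$ by a trigonometric polynomial using a Vaaler (or Erd\H{o}s--Tur\'an) kernel with cut-off $K$, so that $\psi(x)=\sum_{0<|k|\le K}c_k e(kx)+O(\Delta_K(x))$ with $|c_k|\ll 1/|k|$ and with a Beurling--Selberg majorant controlling the tail on average. This reduces the task to bounding, for each $|k|\le K$, an exponential sum
\[
E_k(X;q,a)\;:=\;\sum_{\substack{p\le X\\ p\equiv a\,(\bmod\, q)}}\mathbbm{1}_{\mathcal{P}_C}(p)\,e(\gamma_k p)\log p,\qquad \gamma_k=-k/\alpha,
\]
and then summing $|c_k|$ against these bounds. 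Expanding $\mathbbm{1}_{\mathcal{P}_C}$ as a linear combination of irreducible characters of $G$ evaluated at $\mathrm{Frob}_p$, or alternatively using the Deuring reduction to primes in the fixed field $L^{\langle\sigma\rangle}$ of degree $d$, turns each $E_k$ into a bounded linear combination of sums twisted by Hecke/Artin data on the compositum $L\mathbb{Q}(\zeta_q)$.

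The core estimate is then a \emph{twisted} prime-number theorem with explicit dependence on both $\gamma_k$ and $q$, following Kane \cite{K13}: apply Vaughan's identity to $\Lambda$, split into Type~I and Type~II bilinear sums, and on the Type~II side use Cauchy--Schwarz together with minor-arc estimates for $\sum_n e(\gamma_k n)$ supplied by the finite-type hypothesis on $\alpha$; on the Type~I side use a $q$-uniform Chebotarev PNT (Weiss-type / log-free zero-density) for $L\mathbb{Q}(\zeta_q)$, whose discriminant bound $\log|\Delta_{L\mathbb{Q}(\zeta_q)}|\ll d\varphi(q)\log q + \log\Delta_L$ is exactly what yields the polynomial factor $q^{d+1}$ in the error. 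Optimizing the truncation parameter $K$ against the Diophantine quality of $\alpha$ (i.e.\ against $\|k/\alpha\|$, controlled by $\tau<\infty$) produces the power saving $X^{1-\kappa}$ with $\kappa = 10^{-d}/(125d\tau)$.

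The main obstacle is the Type~II analysis together with maintaining a polynomial-in-$q$ dependence throughout: one must carry the $q$-aspect uniformly through the bilinear sums, the character sum over $G$, and the application of the effective Chebotarev input on the compositum, since any loss worse than polynomial in $q$ would destroy the Bombieri--Vinogradov application in Theorem~\ref{thm: chebeattyBV}. Everything else (the Beatty $\to$ Fourier reduction and the final assembly) is routine once the uniform twisted PNT is in hand.
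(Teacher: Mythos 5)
Your overall pipeline---express the Beatty indicator via a fractional-part/sawtooth identity, approximate by a short trigonometric polynomial, and reduce to exponential sums $\sum_p \one_{\cP_C}(p)\log(p)e(\gamma_k p)$ over primes in arithmetic progression and a Chebotarev class---is the same as the paper's (the paper uses Vinogradov's $\psi_\Delta$ plus a discrepancy bound where you use Vaaler/Erd\H{o}s--Tur\'an; these are interchangeable). Your reduction of $\one_{\cP_C}$ via character orthogonality to Hecke-character-twisted sums over the fixed field $K = L^{\langle\sigma\rangle}$, and then Vaughan's identity on $\Lambda_K$, is also what the paper does (Propositions~\ref{prop: our Kane 6} and~\ref{proposition: qdependentKanesum}).

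The genuine gap is in your account of where the polynomial factor $q^{d+1}$ comes from, and more broadly in how the Type~I and Type~II sums are actually estimated. You propose handling the Type~I side with ``a $q$-uniform Chebotarev PNT (Weiss-type / log-free zero-density) for $L\QQ(\zeta_q)$,'' asserting that the discriminant bound for the compositum ``is exactly what yields the polynomial factor $q^{d+1}$.'' That is not how it works, and it would not work: after the Hecke-character orthogonality step, there is no Chebotarev condition left in the bilinear sums---the sums range over \emph{all} ideals of $K$, weighted by a Gr\"{o}ssencharacter $\xi$ and a Dirichlet character $\chi$ of modulus $q$, twisted by $e(\alpha\,\on{N}(\mathfrak{a}))$. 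There is nothing for a zero-density estimate to bite on; and even if one tried to re-insert the Chebotarev condition and run an effective Chebotarev theorem on $L\QQ(\zeta_q)$, the discriminant of that compositum appears in the \emph{exponent} of the zero-free region, so one would lose $\exp(-c\sqrt{\log X}/\sqrt{\log \Delta_{L\QQ(\zeta_q)}})$, which is catastrophically worse than $q^{d+1}X^{-\kappa}$ and would not feed into the Bombieri--Vinogradov application. Similarly, your Type~II treatment envisions ``minor-arc estimates for $\sum_n e(\gamma_k n)$,'' but the sums that actually arise are exponential sums of $e(\alpha\,\on{N}(\mathfrak{a}))$ over lattice points in $K\otimes\RR$; since the norm is a degree-$d$ form, these require Weyl differencing along one-dimensional lines in the lattice and a geometry-of-numbers argument, not a geometric-series bound. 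In the paper this is precisely Proposition~\ref{proposition: Our 21} (adapting Kane's Lemma~21): the sum is sliced by residue classes modulo $q$, yielding $\varphi(q)$ classes and a leading coefficient $q^k c$ in the polynomial exponential sum, and it is this---together with restricting to \emph{integral} polynomials inside the Dirichlet character---that produces the $q^{d+1}$ factor, not any discriminant bound. The $10^{-d}$ in $\kappa$ comes from the $10^{-k}$-power saving in Weyl's inequality; your proposal would not see this exponent at all. So while your Beatty-to-Fourier reduction and your orthogonality step are sound and essentially identical to the paper, the heart of the argument---the uniformly $q$-dependent bound on the Type~I/II sums over norm forms---is absent and was replaced by a mechanism that cannot deliver the result.
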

\begin{remark}
Note that the error term explicitly describes the $q$-dependence. This will be vital in our application to our Bombieri--Vinogradov type result, \cref{thm: chebeattyBV}.
\end{remark}
The proof of \cref{thm: q-uniform PNT BEattyIntCheb}
relies on a condition placed on the exponential sums \[
\sum_{\substack{p \le M \\ p\equiv a \mod q}}\one_{\cP_C}(p)\log(p)e(\theta p),
\]
where $C$ is a fixed conjugacy class of the Galois group.  Using work of Kane \cite{K13}, we will show that the magnitude of this sum is $O(x^{1-\eps})$, where $\eps>0$. This will allow us to use techniques from Banks and Shparlinski \cite{BS09} to prove Theorem~\ref{thm:chebeatty} in the following section.



\subsection{Exponential sums and the work of Kane} 
Kane \cite{K13} bounds the exponential sum that we require in his proof of a variant of Vinogradov's three primes theorem for Chebotarev classes. 

Suppose that $L/\QQ$ is a finite Galois extension with $G = \Gal(L/\QQ)$, $C$ a conjugacy class of $G$, and $X$ a positive real number. Moreover, take $a < q$ with $(a,q) = (\Delta_L,q) = 1$. Adapting Kane, we define the generating function

\begin{equation}
G_{L, C, X,q,a}(\alpha) = \sum_{\substack{p\le X \\ p \equiv a \mod q}}\one_{\cP_C}(p)\log(p)e(\alpha p).
\end{equation}

To explain our bounds, we first require the following definition:

\begin{definition}
We say that a real number $\alpha$ has a \textbf{good approximation with denominator $s$} if there exists a rational number of the form $\frac{r}{s}$ with $(r, s) = 1$ satisfying $|\alpha - \frac{r}{s}| <\frac{1}{s^2}$.
\end{definition}

Our general bound, whose proof we defer to Section \ref{Kaneproofsection}, is as follows:

\begin{theorem}
\label{thm: Gexponentialsumbound}
Take $L/\QQ$ to be a finite Galois extension, relatively prime integers $a, q$, and $C \subseteq \on{Gal}(L/\QQ)$ a conjugacy class.  Pick some element $\sigma\in C$ and letf $d:= [L^{\langle \sigma\rangle}:\QQ]$. Let $\alpha$ be an irrational number with rational approximation of denominator $XB^{-1} > s > B$ for some $B > 0$. Then we have
\begin{equation}
G_{L,C,X,q,a}(\alpha) = O\Big(q^{d+1}X\cdot E(X,B)\Big),
\end{equation}
where 
\begin{equation}\label{eq:errortermexpsum}
E(X,B) = \log^2(X)B^{-10^{-d}/12} + \log^2(X)X^{-10^{-d}/60}+\log^2(X)X^{-10^{-d}/10}+\log^{2+d^2/2}(X)B^{-1/12}.
\end{equation}
In particular, for $0 < e_1 < e_2 < 1$ and a denominator in the range $s \in [X^{e_1}, X^{e_2}]$ we have $G_{L,C,X,q,a}(\alpha) = O_{e_1,e_2}(q^{d+1}X^{1 - \eta})$ for some $\eta > 0$. 
\end{theorem}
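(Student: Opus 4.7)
The plan is to handle the arithmetic progression condition via additive-character orthogonality, reducing to a Chebotarev-only exponential sum at shifted arguments which directly fits Kane's framework, and then to invoke Kane's bilinear-sum machinery while carefully tracking the $q$-dependence. Orthogonality of additive characters modulo $q$ gives
\[
\one_{p \equiv a \bmod q} = \frac{1}{q}\sum_{b=0}^{q-1} e\paren{\frac{b(p-a)}{q}},
\]
so that, writing $\alpha_b := \alpha + b/q$ and $G_{L,C,X}(\gamma) := \sum_{p \le X} \one_{\cP_C}(p)\log(p)\,e(\gamma p)$,
\[
G_{L,C,X,q,a}(\alpha) = \frac{1}{q}\sum_{b=0}^{q-1} e\paren{-\frac{ab}{q}}\, G_{L,C,X}(\alpha_b).
\]
The triangle inequality reduces the problem to bounding each sum $G_{L,C,X}(\alpha_b)$ uniformly in $b$, at the cost of one factor of $q$.

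For each fixed $b$, I would detect the Chebotarev condition $\Big[\frac{L/\QQ}{p}\Big] = C$, following Kane, by working in the intermediate field $K = L^{\langle\sigma\rangle}$ of degree $d$ over $\QQ$ for a chosen $\sigma \in C$: up to a constant depending on $G$ and $C$, and up to a negligible contribution from primes dividing $\Delta_L$, primes $p$ with Frobenius in $C$ are counted by the degree-one unramified prime ideals of $\mc{O}_K$ whose rational norm equals $p$. I would then apply an ideal-theoretic Vaughan identity to the von Mangoldt function $\Lambda_K$ on $\mc{O}_K$ to decompose the sum into Type I and Type II bilinear sums over pairs of ideals with norms in prescribed ranges, weighted by $e(\alpha_b N(\mathfrak{a}))$.

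Type I sums are then estimated by interchanging order of summation and summing an inner geometric series; from the hypothesized approximation $|\alpha - r/s| < s^{-2}$ with $B < s < XB^{-1}$, one derives a rational approximation to $\alpha_b$ whose denominator $s_b$ lies in a range controlled by $\gcd(s,q)$, and this produces a contribution $\ll X \log^{2+d^2/2}(X)\, B^{-1/12}$, where the power $d^2/2$ in the log factor tracks the unit and ideal-class complexity of $K$. Type II sums are handled by Cauchy-Schwarz followed by a Weyl-style count of near-coincidences $\alpha_b N(\mathfrak{a}_1\mathfrak{a}_2) \approx \alpha_b N(\mathfrak{a}_1'\mathfrak{a}_2') \pmod 1$, which iterated through the degree-$d$ factorization structure of $K$ yields the remaining terms $\log^2(X)(B^{-10^{-d}/12} + X^{-10^{-d}/60} + X^{-10^{-d}/10})$; the $10^{-d}$ exponent reflects the geometric loss compounded at each of roughly $d$ nested Cauchy-Schwarz steps, and the power-saving for $s \in [X^{e_1}, X^{e_2}]$ follows by setting $B$ to a suitable power of $X$ in the admissible range.

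The factor $q^{d+1}$ in the final bound is assembled from (i) the single $q$ lost in the character expansion and (ii) an additional $q^d$ absorbed whenever $\gcd(s,q)$ is large enough to spoil $\alpha_b$'s approximation, since the trivial bound on ideal norms of $\mc{O}_K$ in a residue class modulo $q$ scales like $q^d$. The main obstacle will be precisely this uniform $q$-tracking through Kane's bilinear machinery in $K$: a naive accounting easily loses $q^{2d}$ or worse, and the decisive point is to show that the iterated Cauchy-Schwarz in the Type II estimate never compounds $q$-loss beyond a single $q^d$ factor, so that the resulting bound remains nontrivial when $q$ is taken as large as a small power of $X$ in the downstream Bombieri--Vinogradov application.
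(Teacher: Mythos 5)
Your reduction uses \emph{additive} characters: you write $\one_{p\equiv a \bmod q}=\frac1q\sum_b e(b(p-a)/q)$ and shift the frequency to $\alpha_b=\alpha+b/q$, planning to invoke Kane's un-twisted machinery at each $\alpha_b$. The paper instead uses \emph{Dirichlet} character orthogonality (equation \eqref{eq: DirichletOrthogonality}), keeps $\alpha$ fixed, and pushes the multiplicative twist $\chi$ all the way through Kane's proof to obtain a $\chi$-twisted version of every intermediate estimate (Lemmas~\ref{lemma: Our 19}, \ref{lemma: Our 20}, Proposition~\ref{proposition: Our 21}, Corollary~\ref{corollary: Our 22}, Proposition~\ref{proposition: qdependentKanesum}). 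These are genuinely different decompositions.

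There is, however, a real gap in the additive route. The hypothesis gives you control over a good approximation of $\alpha$ with denominator $s\in(B,X/B)$, but it says nothing that prevents some shift $\alpha+b/q$ from landing arbitrarily close to a low-height rational. (For instance, $\alpha$ can have continued-fraction convergents at denominator $2$ and at some $s\in(B,X/B)$ simultaneously; then $\alpha+b/q$ for a suitable $b$ is within $|\alpha-r_2/2|$ of an integer, and that distance can be vanishingly small while the $(B,X/B)$ hypothesis still holds.) For such $b$, $\alpha_b$ has no good approximation with denominator in any range $(B',X/B')$ with $B'>1$, so Kane's minor-arc bilinear machinery is simply inapplicable: you are on a major arc and need a separate estimate built on an effective prime number theorem for Chebotarev classes, which your proposal never introduces. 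Your phrase ``a rational approximation to $\alpha_b$ whose denominator $s_b$ lies in a range controlled by $\gcd(s,q)$'' glosses over this: the derived fraction $(rq+bs)/(sq)$ need not be a \emph{good} approximation of $\alpha_b$ in Kane's sense, and the true best approximation of $\alpha_b$ at a given height is not controlled by $\gcd(s,q)$ alone. Until these exceptional $b$ are identified, counted, and separately bounded, the estimate is not complete.

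Your accounting for the factor $q^{d+1}$ is also not the source the argument actually needs. In the paper it arises at a precise spot: in the analogue of Kane's polynomial exponential-sum lemma (our Lemma~\ref{lemma: Our 20}), the sum is split into residue classes mod $q$; the rescaled polynomial $P(qx+a)$ has leading coefficient $q^d c$, which multiplied by the class sum produces the $q^d$, and the length restriction $x\le (X-a)/q$ together with the $\varphi(q)$ classes accounts for the remaining accounting that yields $q^{d+1}$ after pushing through Proposition~\ref{proposition: Our 21} and Proposition~\ref{proposition: qdependentKanesum}. In your scheme the only clear loss is the single factor $q$ from the triangle inequality over $b$; the additional $q^d$ you attribute to ``the trivial bound on ideal norms of $\mc{O}_K$ in a residue class modulo $q$'' is not a step in your outline and doesn't obviously materialize, so it is unclear whether your approach gives the same exponent, a better one, or (in the exceptional-$b$ regime) no power saving at all.
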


\subsection{Exponential sums for finite type $\alpha$}

The bounds we have introduced thus far depend on the denominator of a certain rational approximation. To get a uniform statement that does not depend on an approximation, we need to assume that $\alpha$ has finite type. The next bound is similar to that of \cite[Theorem 4.2]{BS09}.

\begin{lemma}
\label{lemma: exp-sum}
Let $\gamma$ be a fixed irrational number of finite type $\tau > 0$ and let $C$ be a conjugacy class of the Galois group of a finite Galois extension $L/\QQ$. Then for all real numbers $0 < \eps < \frac{1}{4 \tau}$, there exists a constant $\eta$ such that
\[
\Big|\sum_{\substack{p \le X\\
p \equiv a \mod q}}\one_{\cP_C}(p)\log(p)e(\alpha k p)\Big| = O_{L,\alpha,\beta}\Big(q^{d+1}X^{1 - \eta}\Big),
\]
for all integers $1 \le k \le X^{\eps}$ provided that $X$ is sufficiently large. Moreover, we can explicitly take $\eta = 10^{-d}/\max(125\tau,20d)$, where $d=[L:L^{\langle c\rangle}]$ for any $c \in C$.
\end{lemma}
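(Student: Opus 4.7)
The plan is to reduce \cref{lemma: exp-sum} to \cref{thm: Gexponentialsumbound} by producing, uniformly in $k \in [1, X^{\eps}]$, a rational approximation to $k\gamma$ whose denominator $s$ lies in the window $[B, XB^{-1}]$ required by that theorem, with $B$ a fixed power of $X$ independent of $k$. The upper bound on $s$ is immediate from Dirichlet's approximation theorem: applied with parameter $Q = X^{1-\delta}$ for a small $\delta > 0$ to be chosen, it supplies, for each $k$, coprime integers $r, s$ with $1 \le s \le Q$ and $|k\gamma - r/s| \le 1/(sQ) \le 1/s^2$, so $r/s$ is a good approximation in the sense of the paper.

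The matching lower bound on $s$ comes from the finite-type hypothesis. By the definition of $\tau$, for every $\rho > \tau$ there is a constant $c_\rho > 0$ with $\|n\gamma\| \ge c_\rho n^{-\rho}$ for all $n \ge 1$ (finitely many small $n$ are handled by the irrationality of $\gamma$ and then absorbed into $c_\rho$). Applying this to $n = sk$ and combining with the Dirichlet bound $\|sk\gamma\| \le 1/Q$, I obtain
\[
s \ge \frac{c_\rho^{1/\rho}}{k}\, X^{(1-\delta)/\rho} \ge c_\rho^{1/\rho}\, X^{(1-\delta)/\rho - \eps}.
\]
Choosing $\rho$ close to $\tau$ and $\delta$ small, the hypothesis $\eps < 1/(4\tau)$ makes $(1-\delta)/\rho - \eps$ a fixed positive number, so $s \in [X^{e_1}, X^{e_2}]$ for constants $0 < e_1 < e_2 < 1$ independent of $k$, $q$, and $a$.

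With this approximation in hand, I apply \cref{thm: Gexponentialsumbound} to $k\gamma$ with $B = X^{e_1}$. Since $s \in [B, X B^{-1}]$ for $X$ sufficiently large, the theorem yields
\[
G_{L,C,X,q,a}(k\gamma) = O\!\left(q^{d+1} X \cdot E(X, X^{e_1})\right) = O\!\left(q^{d+1} X^{1-\eta}\right)
\]
uniformly in $k \le X^{\eps}$, which is the conclusion of \cref{lemma: exp-sum}. This mirrors the template of \cite[Theorem 4.2]{BS09}, with Kane's generating-function bound substituting for the classical Vinogradov-type bound.

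The main obstacle is not the overall structure but the extraction of the explicit constant $\eta = 10^{-d}/\max(125\tau, 20d)$. This requires balancing the two $B$-sensitive terms in \eqref{eq:errortermexpsum}, namely $B^{-10^{-d}/12}$ and $\log^{2+d^2/2}(X)\, B^{-1/12}$, against the value of $e_1$ produced in the finite-type step. The first term is responsible for the $125\tau$ branch in the denominator of $\eta$, via an optimization of $e_1$ against the constraint $\eps < 1/(4\tau)$, and dominates when $\tau$ is large relative to $d$; the second term is responsible for the $20d$ branch and dominates in the opposite regime. The $q^{d+1}$ factor and the uniformity in $a$ are inherited verbatim from \cref{thm: Gexponentialsumbound}, since the Dirichlet and finite-type steps do not interact with $q$ or $a$ at all.
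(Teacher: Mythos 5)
Your plan is essentially the same as the paper's: produce a good rational approximation to $k\gamma$ with denominator pinned between two fixed powers of $X$, then invoke \cref{thm: Gexponentialsumbound}. The paper takes the continued-fraction convergent of $k\gamma$ with largest denominator $\le X^{1-\eps}$, which is interchangeable with your use of Dirichlet's theorem at scale $Q = X^{1-\delta}$; the finite-type lower bound on $s$ is identical, and the application of the generating-function bound is the same. So the structural part of your proof matches.

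The gap is in the explicit constant, which is part of the assertion being proved. You defer the computation of $\eta = 10^{-d}/\max(125\tau, 20d)$ to a heuristic, and the heuristic misidentifies the dominant terms. With the paper's choice (take $\eps = 1/(5\tau)$, $B = X^{\eps/2}$ so that $s \in [X^{1/(4\tau)}, X^{1-\eps}] \subseteq [B, XB^{-1}]$) the four terms of $E(X,B)$ contribute exponents roughly $-10^{-d}/(120\tau)$, $-10^{-d}/60$, $-10^{-d}/(10d)$, and $-1/(120\tau)$ respectively. The $\tau$-branch of the max indeed comes from $B^{-10^{-d}/12}$, but the $20d$-branch comes from the $B$-\emph{independent} third term $\log^2(X)\, X^{-10^{-d}/(10d)}$, not from $\log^{2+d^2/2}(X)\, B^{-1/12}$ as you claim; the latter term decays faster than the first by a factor of $10^{-d}$ in the exponent and is never the bottleneck. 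To close the gap you should actually make the fixed choices $\eps = 1/(5\tau)$, $B = X^{\eps/2}$, verify $s$ lands in the window $[B, XB^{-1}]$, and then read off $\eta$ by comparing the four exponents and padding the denominators (from $120\tau$ and $10d$ to $125\tau$ and $20d$) to absorb the logarithm factors.
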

\begin{proof}

The condition that $\alpha$ has finite type $\tau$ means that for any fixed $\rho > \tau$ there is a constant $c > 0$ such that for $s\ge 1$, we have $\|\alpha s\| > cs^{-\rho}\quad$.
Fix such a constant $\rho < \frac{1}{4\eps}$.
Now take $r/s$ to be the convergent in the continued fraction expansion of $\alpha k$ which has largest denominator $d$ not exceeding $X^{1-\eps}$, so that 
\[
\Big|\alpha k - \frac{r}{s}\Big| \le \frac{1}{s\cdot X^{1 - \eps}}.
\]
We isolate $\frac{1}{X^{1 - \eps}}$ to get
$\frac{1}{X^{1-\eps}} \ge |\alpha k s - r| \ge \|\alpha k s\| > c(ks)^{-\rho}.$ Since we are taking $k \le X^\eps$ we can rearrange to obtain
\begin{align*}
    s &\ge c^{1/\rho} k^{-1}(X^{1-\eps})^{1/\rho} \ge c^{1/\rho}X^{1/\rho}X^{\eps(-1 - 1/\rho)} \ge X^{1/(4\rho)} > X^{\eps}
\end{align*}
for sufficiently large $X$. We plug in $\alpha  = \gamma k$ in \cref{thm: Gexponentialsumbound}, which yields for $s \in [X^{\eps},X^{1 - \eps}]$ the bound
\[
\sum_{\substack{p\le X\\p \equiv a \mod q}}\one_{\cP_{C}}(p)\log(p)e(\alpha p) =  O(q^{d+1}X\cdot E(X,X^{\eps})) = O(q^{d+1}X^{1-\eta}).
\]
We now explicitly compute $\eta$.  Specifically, in \cref{thm: Gexponentialsumbound}, take $\eps = \frac{1}{5\tau}$.  Then in the region $q \in [X^{1/(4 \tau)}, X^{1 - \eps}] \subseteq [X^{\eps}, X^{1 - \eps}]$, we have that by setting $B = X^{\eps/2}$ that  $G_{L,C, X,q,a}(\alpha)$ is 
\begin{align*}
O\Big(q^{d+1}X\Big(\log^2(X)X^{-10^{-d}/(120\tau)} + \log^2(X)X^{-10^{-d}/60}+\log^2(X)X^{-10^{-d}/(10d)}+\log^{2+d^2/2}(X)X^{-1/120\tau}\Big)\Big)
\\= O\Big(q^{d+1}X\Big(\log^2(X)\Big(X^{-10^{-d}/(120\tau)} + X^{-10^{-d}/(10d)}\Big)\Big)\Big)
= O\Big(q^{d+1}X^{1-10^{-d}/b + \delta}\Big)
\end{align*}
for any small $\delta > 0$ and $b= \max(10d, 120\tau)$.
Therefore, taking $\eta = 10^{-d}/\max(20d, 125\tau)$ yields an explicit bound.
\end{proof}






\subsection{Facts about irrational sequences}
We begin with some preliminary definitions regarding irrational numbers. This presentation follows section 3 of \cite{BS09}.  The first tool we will need is a bound on discrepancies.

Define 
\begin{equation}
D_{\gamma,\delta}(M)\coloneqq \sup_{\mc{I}\subseteq [0, 1)}\Big|\dfrac{V(\cI, M)}{M} - |\cI|\Big|,
\end{equation}
where $V(\cI, M)$ is the number of positive integers $m\le M$ such that $\{\gamma m + \delta\}\in\cI$ and the supremum is taken over all subintervals $\cI = (r_1, r_2)$ of the interval $[0, 1)$.

\begin{lemma}[\cite{KN06}, Chapter 2, Theorem 3.2]
\label{lemma:discrepancy}

Let $\gamma$ be a fixed irrational number of finite type $\tau<\infty$.  Then for all $\delta\in\RR$ we have that
\[
D_{\gamma, \delta}(M)\le M^{-\frac{1}{\tau}+o(1)} \qquad (M\rightarrow\infty),
\]
where the function implied by $o(\cdot)$ depends only on $\gamma$.
\end{lemma}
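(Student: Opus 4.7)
This is a classical discrepancy bound quoted from \cite{KN06}; I would follow the two-stage strategy of combining the Erdős--Turán inequality with a refined continued-fraction analysis of $\gamma$.

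First, since $\delta$ enters every relevant Weyl sum only through the unit-modulus factor $e(h\delta)$, it does not affect absolute values, and I may focus on the uniform distribution of $\{m\gamma\}_{m=1}^{M}$. Applying the Erdős--Turán inequality for any positive integer $H$,
\[
D_{\gamma,\delta}(M) \ll \frac{1}{H} + \sum_{h=1}^{H}\frac{1}{h}\left|\frac{1}{M}\sum_{m=1}^{M} e(h\gamma m)\right|,
\]
reduces the discrepancy bound to controlling Weyl sums. The elementary geometric-series estimate $|\sum_{m=1}^{M} e(h\gamma m)|\le (2\snorm{h\gamma})^{-1}$ converts this into a problem of lower-bounding the distances $\snorm{h\gamma}$, which the finite type hypothesis handles: for any $\rho>\tau$ there is a constant $c_\rho>0$ with $\snorm{h\gamma}\ge c_\rho h^{-\rho}$ for all $h\ge 1$. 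Substituting and optimizing over $H$ yields a preliminary discrepancy bound of the form $D_{\gamma,\delta}(M)\ll M^{-1/(\tau+1)+o(1)}$.

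The main obstacle is then to bridge this softer exponent to the sharper $-1/\tau+o(1)$ claimed in the lemma: the Erdős--Turán step on its own is not tight enough, as one already sees from the case of badly approximable $\gamma$ (type $\tau=1$), for which it only recovers $M^{-1/2+o(1)}$ instead of the correct near-optimal rate $M^{-1+o(1)}$. Closing this gap requires arithmetic input beyond Weyl-sum bounds: one passes to the continued-fraction convergents $p_k/q_k$ of $\gamma$, whose partial quotients $a_k$ satisfy $a_{k+1}\ll_\epsilon q_k^{\tau-1+\epsilon}$ under finite type $\tau$ (since $\snorm{q_k\gamma}<q_{k+1}^{-1}$). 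Summing the convergent-by-convergent contributions through the explicit discrepancy formula for Kronecker sequences $\{n\gamma\}$—equivalently, using the three-distance theorem or an Ostrowski decomposition of $M$ in base $(q_k)$—produces the claimed exponent. Since this argument is carried out in detail in \cite{KN06}, Chapter~2, Theorem~3.2, I would present only the outline above and defer to their argument for the remaining details.
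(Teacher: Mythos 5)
The paper does not prove this lemma at all; it is cited directly from \cite{KN06}, Chapter~2, Theorem~3.2, so there is no in-text argument to check your proposal against. Your sketch is nevertheless a sound outline of a valid proof, and you correctly diagnose where the difficulty lies: plugging the pointwise bound $\snorm{h\gamma}\gg h^{-\rho}$ into the Erd\H{o}s--Tur\'an inequality and optimizing $H\asymp M^{1/(\rho+1)}$ recovers only $M^{-1/(\tau+1)+o(1)}$, and passing to the true exponent $-1/\tau$ must come from estimating the averaged quantity $\sum_{h\le H}h^{-1}\min\bigl(M,\snorm{h\gamma}^{-1}\bigr)$ in a way that exploits the spacing of the points $h\gamma\bmod 1$, which is encoded in the convergents $p_k/q_k$. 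Your inequality $a_{k+1}\ll_\varepsilon q_k^{\tau-1+\varepsilon}$ is exactly the right translation of finite type into partial-quotient growth, and your use of the fact that $\delta$ only contributes a unimodular factor $e(h\delta)$ to the Weyl sums is correct. One methodological aside: you propose abandoning Erd\H{o}s--Tur\'an at the second stage in favor of a direct Ostrowski/three-distance computation of the discrepancy of $\{n\gamma\}$. That route works (and is the one usually taken to obtain the sharper $O(\log M/M)$ bound when the partial quotients are bounded), but the Kuipers--Niederreiter proof stays inside the Erd\H{o}s--Tur\'an inequality: it invokes a preceding Chapter~2 lemma bounding $\sum_{h\le H}(h\snorm{h\gamma})^{-1}$ convergent by convergent, then feeds in the bound on $a_{k+1}$. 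Either route establishes the lemma; since the paper simply cites the source, deferring to \cite{KN06} for the details, as you do, is the appropriate thing to do here.
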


The key to analyzing Beatty sequences arises from the fact that they have cleanly stated indicator functions. This next elementary lemma allows us to rewrite the indicator function of a Beatty sequence in a tractable form.
\begin{lemma}[{\cite[Lemma 3.2]{BS09}}]
\label{lemma:elementary}
Let $\alpha, \beta$ be in $\RR$ with $\alpha > 1$.  Then an integer $m$ has the form $m=\lfloor\alpha n+\beta\rfloor$ for some integer $n$ if and only if
\[
0<\{\alpha^{-1}(m-\beta + 1)\}\le\alpha^{-1}.
\]
The value of $n$ is determined uniquely by $m$.
\end{lemma}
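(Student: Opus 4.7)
The plan is to translate the defining inequality for $m = \lfloor \alpha n + \beta \rfloor$ into a half-open interval condition on $n$, observe that the interval has length less than one, and then recognize the existence condition as precisely the stated fractional-part inequality.

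First I would rewrite $m = \lfloor \alpha n + \beta \rfloor$ as $m \le \alpha n + \beta < m + 1$, which, since $\alpha > 0$, is equivalent to $n \in [A,\, A + \alpha^{-1})$, where $A := \alpha^{-1}(m - \beta)$. Because $\alpha > 1$, this interval has length $\alpha^{-1} < 1$, and so it contains at most one integer. This immediately gives the asserted uniqueness of $n$, reducing the lemma to showing that the interval contains an integer if and only if $0 < \{B\} \le \alpha^{-1}$, where $B := A + \alpha^{-1} = \alpha^{-1}(m - \beta + 1)$.

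Next I would split into two cases based on whether $A$ is an integer. If $A \in \Z$, then $n = A$ lies in $[A, A + \alpha^{-1})$, and since $0 < \alpha^{-1} < 1$, we have $\lfloor B \rfloor = A$ and $\{B\} = \alpha^{-1}$; this realizes the equality case. If $A \notin \Z$, the smallest integer in $[A, A+\alpha^{-1})$ (if any) must be $\lfloor A \rfloor + 1$, and this lies in the interval exactly when $1 - \{A\} < \alpha^{-1}$, i.e.\ $\{A\} > 1 - \alpha^{-1}$. A direct computation (writing $B = A + \alpha^{-1}$ and peeling off an integer part) shows this condition is equivalent to $0 < \{B\} < \alpha^{-1}$. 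Combining the two cases yields the equivalence with $0 < \{B\} \le \alpha^{-1}$.

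There is no real obstacle here; the only subtlety is careful bookkeeping of strict versus non-strict inequalities at the boundary $\{B\} = \alpha^{-1}$, which corresponds to $A$ being an integer. Since each direction of the equivalence follows from a short algebraic manipulation, the proof is essentially complete once the interval reformulation and case split are in place.
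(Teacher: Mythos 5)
Your argument is correct, and it follows the standard route: rewrite $m=\lfloor\alpha n+\beta\rfloor$ as $m\le \alpha n+\beta<m+1$, hence as $n\in[A,A+\alpha^{-1})$ with $A=\alpha^{-1}(m-\beta)$, note the interval has length $\alpha^{-1}<1$ to get uniqueness, and then translate the containment of an integer into the fractional-part inequality, splitting on whether $A\in\ZZ$ (which accounts for the equality case $\{B\}=\alpha^{-1}$). The paper itself does not prove this lemma — it is quoted directly from \cite[Lemma~3.2]{BS09} — but the proof there is the same elementary interval/fractional-part computation you carried out, so there is nothing to add.
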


\subsection{Intersecting general sets of primes with Beatty sequences}

One might ask if we can replace the Chebotarev condition with other sets of primes $S$.
Indeed, if the corresponding exponential sums are bounded by $O(X^{1 - \kappa})$ for some $\kappa > 0$, we could prove an analogous prime number theorem for primes that are both in $S$ and a fixed Beatty sequence.

\begin{remark}
One also might ask whether the power-saving exponential sum bound can be replaced with a weaker bound, such as $o\Big(\frac{X}{(\log X)^A}\Big)$ for any $A$. It is likely that we could prove an analogous prime number theorem. However, these cases do not apply to Chebotarev sets, and making a general statement about such primes may be difficult. However, we will see in that a power-saving bound is essential for proving a Bombieri--Vinogradov type theorem.
\end{remark}

\begin{remark}
Banks and Shparlinski also proved a prime number theorem for primes in the sequences $\mathcal{C}_{n,\alpha, \beta, q, a} = q\lfloor \alpha n + \beta \rfloor + a, (a,q) = 1
.$ Their results show that the natural density of this set is $ \delta( \mathcal{C}_{\alpha, \beta, q, a})= \frac{1}{\varphi(q)\alpha}.$ One can easily modify our proofs in order to obtain prime number theorem and density statements for primes in $T_{\alpha,\beta,q, a}:= \cP_{C} \cap \mathcal{C}_{\alpha, \beta,q,a}$. The proofs of Bombieri--Vinogradov and bounded gaps will also be the same, since the error term will still be power-saving.
\end{remark}

\section{Proofs of Theorem~\ref{thm:chebeatty} and Theorem~\ref{thm: q-uniform PNT BEattyIntCheb}}\label{pnt_proofs}
We now prove \cref{thm: q-uniform PNT BEattyIntCheb}, which implies \cref{thm:chebeatty}. Before proving this statement, we need to detail Vinogradov's theorem on the indicator function of a Beatty sequence.
\begin{theorem}[{\cite[Chapter~I, Lemma~12]{V04}}]
\label{thm: BeattyVinogradov}
Let $\gamma\in (0, 1)$ be a real number and let $\psi(x)$ be the periodic function with period one for which 
\[
\psi(x) = \begin{cases} 
1 &\text{if } 0<x\le\gamma; \\ 
0 &\text{if } \gamma<x\le 1. 
\end{cases}
\]
For any $\Delta$ such that 
\[
0 < \Delta < \frac{1}{8}\quad \text{ and }\quad \Delta \le \frac{1}{2}\min(\gamma, 1 - \gamma),
\]
there is a real-valued function with the following properties
\begin{enumerate}
    \item $\psi_\Delta(x)$ is periodic of period one;
    \item $0 \le \psi_\Delta(x) \le 1$ for all $x \in \RR$;
    \item $\psi_\Delta(x) = \psi(x)$ if $\Delta \le x \le \gamma - \Delta$ or $\gamma + \Delta \le x \le 1 - \Delta$.
    \item 
    $\psi_\Delta(x)$ has a Fourier series
    \[
    \psi_\Delta(x) = \gamma + \sum_{k = 1}^{\infty}(g_ke(kx) + h_ke(-kx))
    \]
    with uniform bounds on the coefficients 
    \[
    \max(|g_k|, |h_k|) \le \min\Big(\frac{2}{\pi k}, \frac{2}{\pi^2k^{2}\Delta}\Big)\text{ for all } k \ge 1.\]
\end{enumerate}
\end{theorem}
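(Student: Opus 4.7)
The plan is to construct $\psi_\Delta$ as the convolution of $\psi$ against a nonnegative mollifier supported in $[-\Delta, \Delta]$, so that smoothing of the step function translates directly into decay of its Fourier coefficients. Specifically, I would let $K_\Delta$ denote the $1$-periodization of $\tfrac{1}{2\Delta}\one_{[-\Delta, \Delta]}$ and set $\psi_\Delta := \psi * K_\Delta$ on $\RR/\ZZ$. With this choice, properties (1) and (2) follow immediately: periodicity is inherited from $\psi$, and since $K_\Delta \ge 0$ integrates to $1$, the values $0 \le \psi \le 1$ propagate to $0 \le \psi_\Delta \le 1$.

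For property (3), observe that if $x \in [\Delta, \gamma - \Delta]$ then $x - y \in [0, \gamma]$ for every $y \in [-\Delta, \Delta]$, so $\psi(x - y) = 1$ throughout the convolution integral and $\psi_\Delta(x) = \int K_\Delta(y)\,dy = 1 = \psi(x)$; a symmetric argument handles $[\gamma + \Delta, 1 - \Delta]$. The hypothesis $\Delta \le \tfrac{1}{2}\min(\gamma, 1 - \gamma)$ is used precisely to ensure these intervals are non-degenerate and that the support of $K_\Delta$ fits inside the relevant constant region of $\psi$, even at the boundary points $x \in \{\Delta,\, \gamma - \Delta,\, \gamma + \Delta,\, 1 - \Delta\}$.

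Property (4) is the Fourier computation, and follows from the convolution identity $\widehat{\psi_\Delta}(k) = \widehat\psi(k)\,\widehat{K_\Delta}(k)$. A direct calculation gives $|\widehat\psi(k)| \le \tfrac{1}{\pi |k|}$, while $\widehat{K_\Delta}(k) = \tfrac{\sin(2\pi k \Delta)}{2\pi k \Delta}$ and so $|\widehat{K_\Delta}(k)| \le \min(1,\, \tfrac{1}{2\pi |k| \Delta})$. Multiplying these bounds yields $|\widehat{\psi_\Delta}(k)| \le \min\paren{\tfrac{1}{\pi |k|},\; \tfrac{1}{2\pi^2 k^2 \Delta}}$, and identifying $g_k$ and $h_k$ with the positive- and negative-frequency parts of the expansion recovers the stated bound. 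The main subtlety is really just the choice of kernel: a box kernel is the simplest option that produces the exact $\min(1/|k|,\, 1/(k^2 \Delta))$ shape required by the theorem (a triangle function would overshoot to $1/|k|^3$ decay, and a smoother kernel would potentially violate the sharp boundary equalities in (3) unless carefully normalized).
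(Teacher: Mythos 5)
Since the paper cites this result from Vinogradov's book without reproducing a proof, there is no in-paper argument to compare against; your task amounts to supplying an independent derivation. Your construction is correct and complete. Convolving the step $\psi$ against the periodized box kernel $K_\Delta = \tfrac{1}{2\Delta}\one_{[-\Delta,\Delta]}$ gives a continuous, real, $1$-periodic, piecewise-linear function taking values in $[0,1]$, handling (1) and (2); the boundary check you make for (3) holds because $\psi$ differs from the relevant constant only on a measure-zero set inside the averaging window (e.g.\ at $x-y=0$ or $x-y=\gamma$), using $\Delta \le \tfrac12\min(\gamma,1-\gamma)$ to keep $[x-\Delta, x+\Delta]$ inside the required constant region; and for (4), the product bound
\[
|\wh{\psi_\Delta}(k)| = |\wh{\psi}(k)|\,|\wh{K_\Delta}(k)|
\le \frac{1}{\pi|k|}\cdot \min\!\Big(1, \frac{1}{2\pi|k|\Delta}\Big)
= \min\!\Big(\frac{1}{\pi|k|}, \frac{1}{2\pi^2 k^2\Delta}\Big)
\]
is in fact sharper than the stated $\min\!\big(\tfrac{2}{\pi k}, \tfrac{2}{\pi^2 k^2\Delta}\big)$, and the absolute convergence it guarantees justifies writing $\psi_\Delta$ as its Fourier series pointwise. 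Your remark about kernel choice is also on target: a triangle (Fej\'er-type) kernel would trade the $k^{-2}\Delta^{-1}$ decay for $k^{-3}\Delta^{-2}$, which is not the shape the lemma requires, while a symmetric box hits it exactly. The only thing worth flagging is cosmetic: you never actually use the hypothesis $\Delta < \tfrac18$ --- it is subsumed by $\Delta \le \tfrac12\min(\gamma, 1-\gamma) < \tfrac14$ in your construction, and is present in Vinogradov's statement for reasons internal to his own (slightly different) explicit construction, which accounts for his weaker constants.
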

Combining this Fourier expansion with our bounds on exponential sums allows us to prove a version of the prime number theorem for primes in $\mc{P}_{C, \alpha, \beta}$.

\begin{proof}[Proof of \cref{thm: q-uniform PNT BEattyIntCheb}]

Define 
\[S_{C,\alpha,\beta, q,a}(X) \coloneqq \sum_{\substack{p \le X\\ p\equiv a \mod q}}\one_{\cP_{C}}(p)\one_{\cP_{\alpha,\beta}}(p)\log p.
\] 
Let $\gamma = \alpha^{-1}$ and $\delta = \alpha^{-1}(1 - \beta)$.
By Lemma~\ref{lemma:elementary}, $p = \lfloor \alpha n + \beta \rfloor$ if and only if $0 < \{\gamma p + \delta\} \le \gamma $, which implies that
\begin{align*}
S_{C,\alpha, \beta, q, a}(X) = \sum_{\substack{p \le X\\ p\equiv a \mod q}}\one_{\cP_{C}}(p)\one_{\cP_{\alpha,\beta}}(p)\log p
 &= \sum_{\substack{p \le X\\ p\equiv a \mod q}}\one_{\cP_{C}}(p)\one_{(0,\gamma]}(\gamma p + \delta)\log p + O(1) \\
&= \sum_{\substack{p \le X\\ p\equiv a \mod q}}\one_{\cP_{C}}(p)\psi(\gamma p + \delta)\log p + O(1).
\end{align*}

Then we can replace the the indicator function $\psi$ with the Fourier expansion described in Theorem~\ref{thm: BeattyVinogradov} at a cost of
\[
S_{C,\alpha,\beta,q,a}(X) = \sum_{\substack{p\le X \\ p\equiv a \mod q}}\one_{\cP_C}(p)\log(p)\psi_\Delta(\gamma p + \delta) + O(1 + \mathcal{V}(\mathcal{I},X)\log X),
\]
where we are taking $\mathcal{V}(\mathcal{I},X)$ to be the set of integers $m \le X$ with the prescribed fractional part lying inside
\[
\mathcal{I} = [0,\Delta) \cup (\gamma - \Delta, \gamma + \Delta) \cup (1 - \Delta, 1).
\]

We have $|\mathcal{I}| \le 4\Delta$, so by the definition of discrepancy and Lemma~\ref{lemma:discrepancy} we have that 
 \[
 \mathcal{V}(\mathcal{I},X) = O(\Delta X + X^{1/2-1/\tau}).
 \]

Now to get a handle on $S_{C,\alpha, \beta, q,a}$, we use the Fourier expansion and get 
\begin{align*}
    \sum_{\substack{p\le X\\ p\equiv a \mod q}}\one_{\cP_{C}}(p)\log(p)\psi_\Delta(\gamma p + \delta) = \\\gamma\sum_{\substack{p\le X \\ p\equiv a \mod q}}\one_{\cP_{C}}(p)\log(p) &+ \sum_{k = 1}^\infty g_ke(\delta k)\sum_{\substack{p\le X\\ p\equiv a \mod q}}\one_{\cP_{C}}(p)\log(p)e(\gamma k p)\\ &+ \sum_{k = 1}^\infty h_ke(-\delta k)\sum_{\substack{p\le X\\p\equiv a \mod q}}\one_{\cP_{C}}(p)\log(p)e(-\gamma k p).
 \end{align*}

Now we are in a position to use the bounds on exponential sums we derived earlier. By \cref{thm: Gexponentialsumbound}, for 
$1 \le k \le X^\eps$ where $\eps < \frac{1}{4\tau}$ we have 
\[
\sum_{k \le X^\eps}g_ke(\delta k)\sum_{\substack{p \le X\\p\equiv a \mod q}}\one_{\cP_{C}}(p)\log(p)e(\gamma k p) =O\Big(\dfrac{q^{d+1}X}{E(X,X^{\eps})} \sum_{k \le X^{\eps}}k^{-1}\Big) = O\Big(\dfrac{q^{d+1}X\log(X^{\eps})}{E(X,X^{\eps})}\Big),
\]
recalling the definition of $E(X,B)$ from \eqref{eq:errortermexpsum}. By~\cref{lemma: exp-sum} there exists some $\eta > 0$ for which we may take $E(M, M^\eps)=M^\eta$.  Similarly, we have

\[
\sum_{k \le M^\eps}h_ke(\delta k)\sum_{\substack{p\le X\\p\equiv a \mod q}}\one_{\cP_{C}}(p)\log(p)e(\gamma k p) = O\Big(q^{d+1}\dfrac{X\log(X^\eps))}{E(X,X^{\eps})}\Big).
\]
For the larger terms, we can use the trivial bound 
\[
\Big|\sum_{\substack{p\le X\\p\equiv a \mod q}}\one_{\cP_{C}}(p)\log(p)e(\gamma k m)\Big| \le \sum_{\substack{p\le X\\p\equiv a \mod q}} \log(p) = O(X),
\] along with the bounds on the Fourier coefficients to get that
\[
\sum_{k > X^{\eps}}g_ke(\delta k)\sum_{\substack{ p\le X\\p\equiv a \mod q}}\one_{\cP_C}(p)\log(p)e(\gamma k p) =O\Big( X \sum_{k > X^{\eps}}k^{-2}\Delta^{-1}\Big) =O\Big( \dfrac{X}{X^{\eps}\Delta}\Big).
\]
This yields
\begin{align*}
    \sum_{\substack{p \le X\\p\equiv a \mod q}}\Lambda(m)\psi_{\Delta}(\gamma p + \delta) = \gamma \sum_{\substack{p \le X\\p\equiv a \mod q}}\one_{\cP_C}\log(p) +O\Big(\dfrac{q^{d+1}X\log(X^{\eps})}{E(X,X^{\eps})} +\dfrac{X}{E(X,X^{\eps})\Delta}\Big).
\end{align*}
Thus we have
\[
S_{C,\alpha, \beta, q, a}(X) = \gamma\sum_{\substack{p\le X\\p\equiv a \mod q}} \one_{\cP_C}\log(p)+O\Big(\Delta X\log X + X^{1/2-\eps}\log X +\dfrac{q^{d+1}X\log(X^{\eps})}{E(X,X^{\eps})} +\dfrac{X^{1-\eps}}{\Delta}\Big),
\]
where the constant implied by $O(\cdot)$ depends only on $\alpha$ and $L$.

Finally, we plug in $E(X,X^{\eps}) \ll X^\eta$ and set $\Delta = \min\{X^{-2\eta}, \frac{10}{81}, \frac{\gamma}{2}, \frac{1-\gamma}{2}\}$, where we note that in our case $10\eta < \eps$.  This gives 
\begin{align*}
&S_{C,\alpha, \beta, q, a}(X)\\
&= \gamma\sum_{\substack{p \le X\\p\equiv a \mod q}} \one_{\cP_C}(p)\log(p)+O\Big(q^{d+1}\Big(X^{1+2\eta - \eps}\log X + X^{1/2-\eps}\log X +X^{1-\eta}\log X +X^{1-2\eta}\Big)\Big)\\
&= \frac{1}{\alpha}\sum_{\substack{p\le X\\p\equiv a \mod q}} \one_{\cP}(p)\log(p) +  O\Big(q^{d+1}
X^{1 - \eta}\Big)
\end{align*}
where again the constant implied by $O(\cdot)$ depends only on $\alpha$ and $L$, which implies the desired result.
\end{proof} 
\begin{remark}
Note that if $q^{d+1}X^{1 - \kappa} \gg X$ then this bound is trivial. Thus, in the theorem statement we require $q < X^{\kappa/(d + 1) - \eps}$ for any $\eps > 0$ to ensure the error term is still power saving.
\end{remark}
Recalling that the exponential sums yield an upper bound of $M^{1- \eta}$ for $\eta = 10^{-d}/\max(125\tau, 20 d)$, we note that for this error term we can choose the same power-saving $\eta$, which for convenience of writing we can take to be $\kappa = 10^{-d}/(125d\tau)$.

Finally, applying a standard partial summation argument to Theorem~\ref{thm: q-uniform PNT BEattyIntCheb} gives the following $q$-effective version of the prime number theorem.

\begin{theorem}
\label{thm: q-effective PNTBeattyIntCheb}
For $(q,\Delta_L) = 1$ we have the asymptotic
\[
\pi_{C,\alpha,\beta}(X; q, a) = \frac{1}{\alpha}\pi_{C}(X;q,a) + O_L(q^{d+1}X^{1 - \kappa}),
\]
where $\kappa = 10^{-d}/(125d\tau)$, $q < X^{\kappa/(d + 1)-\eps}$ for any $\eps > 0$, and $X > X_0(\alpha,\beta, C, L)$ for a constant $X_0$.
\end{theorem}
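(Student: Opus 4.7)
The plan is to deduce this unweighted asymptotic from the logarithmically weighted statement in \cref{thm: q-uniform PNT BEattyIntCheb} via Abel (partial) summation, the standard device for converting a sum weighted by $\log p$ into the corresponding prime-counting function. To set notation, write
\[
\theta_{C,\alpha,\beta}(t;q,a) := \sum_{\substack{p \le t \\ p \equiv a \mod q}} \one_{\cP_{C,\alpha,\beta}}(p)\log p, \qquad \theta_C(t;q,a) := \sum_{\substack{p \le t \\ p \equiv a \mod q}} \one_{\cP_C}(p)\log p,
\]
so that \cref{thm: q-uniform PNT BEattyIntCheb} reads
\[
E(t) := \theta_{C,\alpha,\beta}(t;q,a) - \tfrac{1}{\alpha}\theta_C(t;q,a) \ll_{L,\alpha,\beta} q^{d+1}\, t^{1-\kappa}
\]
for all $t \ge X_0 = X_0(\alpha,\beta,C,L)$.

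Next, I would apply Abel summation with the weight $b(t) = 1/\log t$ to each of the two log-weighted counts, obtaining
\[
\pi_{C,\alpha,\beta}(X;q,a) = \frac{\theta_{C,\alpha,\beta}(X;q,a)}{\log X} + \int_{2}^{X} \frac{\theta_{C,\alpha,\beta}(t;q,a)}{t\log^{2} t}\,dt,
\]
and the analogous identity for $\pi_C(X;q,a)$. Subtracting $\alpha^{-1}$ times the second identity from the first collapses the main terms and leaves the clean error expression
\[
\pi_{C,\alpha,\beta}(X;q,a) - \tfrac{1}{\alpha}\pi_C(X;q,a) = \frac{E(X)}{\log X} + \int_{2}^{X} \frac{E(t)}{t\log^{2} t}\,dt.
\]

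It then remains to bound each piece. The boundary term is immediately $O(q^{d+1} X^{1-\kappa}/\log X)$ from the estimate on $E(X)$. For the integral, I would split at $t = X_0$: the interval $[2,X_0]$ contributes only a bounded quantity (using the trivial bound $|E(t)| \ll t\log t$), absorbed into the implied constant, while on $[X_0, X]$ the integrand is $\ll q^{d+1}\, t^{-\kappa}/\log^{2}t$, and integrating yields a contribution of order $q^{d+1} X^{1-\kappa}/(1-\kappa)$. Combining these gives the desired error $O_L(q^{d+1} X^{1-\kappa})$. There is no substantive obstacle here: the real content lies in the power-saving log-weighted statement already proved, and the stated range $q < X^{\kappa/(d+1) - \eps}$ is exactly the regime in which the resulting error remains below the trivial bound, matching the remark directly preceding the theorem.
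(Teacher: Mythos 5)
Your argument is correct and is exactly what the paper has in mind: the authors state that \cref{thm: q-effective PNTBeattyIntCheb} follows from \cref{thm: q-uniform PNT BEattyIntCheb} by "a standard partial summation argument," which is precisely the Abel summation with weight $1/\log t$ that you carry out, together with the same splitting of the integral at $X_0$ and absorption of the low range into the implied constant.
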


\section{A Bombieri--Vinogradov type theorem}\label{BV Section}

In this section we prove our analogue of the Bombieri--Vinogradov theorem.  There are two ingredients for this proof: a Bombieri--Vinogradov type theorem for Chebotarev classes and the $q$-effective version of the prime number theorem proven in the preceding section.  The first of these was done by Murty and Murty \cite{MM87}.  We explain their result below.  

We note that Chebotarev classes in the cyclcotomic field $\QQ(\zeta_q)$ are equivalent to residue classes $a\mod q$ where $(a,q)=1$.
This point is precisely why we need a $q$-effective version of the prime number theorem; we need to explicitly extract the error term for the field $L(\zeta_q)$ in terms of the error term for $L$.

For Chebotarev classes,
Murty and Murty prove the following Bombieri--Vinogradov type theorem.
\begin{theorem}
\cite{MM87}
\label{thm: MurtyMurty}
For $Q=x^{\theta-\eps}$ with some $\theta \ge \min\{\frac{2}{|G|}, \frac{1}{2}\}$ and any $\eps>0$, we have 
\[
\sideset{}{'}\sum_{q\le Q}\max_{\substack{y \le x\\(a, q) = 1}}\Big|\pi_{C}(y, q, a) - \dfrac{|C|}{\varphi(q)|G|}\pi(y)\Big|=O_A\Big( \dfrac{x}{(\log x)^A}\Big),
\]
where the sum is over $q$ such that $K\cap\QQ(\zeta_q)=\QQ$.
\end{theorem}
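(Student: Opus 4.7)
The plan is to follow the approach of Murty--Murty: decompose the Chebotarev indicator via the irreducible characters of $G$, translate the resulting prime sums into Artin/Hecke $L$-function data, and invoke a large-sieve inequality for the appropriate family.

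First, use character orthogonality. For any fixed $c \in C$,
\[
\one_C(g) \;=\; \frac{|C|}{|G|}\sum_{\chi \in \wh G}\overline{\chi(c)}\,\chi(g),
\]
so that
\[
\pi_C(y;q,a) \;=\; \frac{|C|}{|G|}\,\pi(y;q,a) \;+\; \frac{|C|}{|G|}\sum_{\chi\ne 1}\overline{\chi(c)}\!\!\sum_{\substack{p\le y\\ p \equiv a\!\!\pmod{q}\\ p\nmid \Delta_L}}\!\chi(\on{Frob}_p).
\]
The $\chi=1$ piece is $\frac{|C|}{|G|}\pi(y;q,a)$, which is close to $\frac{|C|}{\varphi(q)|G|}\pi(y)$ on average in $q\le x^{1/2-\eps}$ by the classical Bombieri--Vinogradov theorem. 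That part already realizes the ceiling $\theta\le 1/2$. The real content is to show that the nontrivial characters contribute $O_A(x(\log x)^{-A})$ on average over $q\le x^{\theta-\eps}$ with $\theta$ as in the statement.

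Second, apply Brauer induction to each nontrivial $\chi$ of $G$, writing $\chi = \sum_i m_i\,\on{Ind}_{H_i}^{G}\psi_i$ with one-dimensional characters $\psi_i$ of solvable subgroups $H_i\le G$. Each $\psi_i$ gives a Hecke Gr\"ossencharakter of $L^{\langle H_i\rangle}$, and the Artin $L$-function of $\chi$ identifies with the corresponding product of Hecke $L$-functions, which are entire and have standard functional equations. The restriction of the sum to $q$ with $L\cap\QQ(\zeta_q)=\QQ$ ensures $\Gal(L(\zeta_q)/\QQ)\cong G\times(\ZZ/q\ZZ)^\times$, so the joint residue-and-Chebotarev condition at a prime $p$ of $\QQ$ factors as a tensor product $\chi\otimes\chi_q$ of an Artin character of $G$ with a Dirichlet character modulo $q$. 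Each inner sum over primes becomes a von Mangoldt sum of such a tensor character.

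Third, feed this family into the large-sieve/zero-density machinery in the pattern of the classical proof of Bombieri--Vinogradov: Siegel--Walfisz for small conductors, and a large sieve combined with a log-free zero-density estimate \`a la Huxley--Heath-Brown for the rest. The main obstacle is exactly the level: the Artin conductor of $\chi\otimes\chi_q$ over $\QQ$ scales like $\mathfrak{f}(\chi)\cdot q^{\deg\chi}$, so averaging over $q\le Q$ effectively moves up to conductor $Q^{|G|}$, losing a factor $|G|$ in the exponent compared with the $\QQ$-case. Balancing the large-sieve input against the prime count $\sim x$ is precisely what forces the weaker level $\theta\le 2/|G|$, and taking the better of the two bounds gives $\theta=\min\{2/|G|,1/2\}$. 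The last delicate point is a potential Siegel-type exceptional real zero of some Artin $L$-function in the family: handling it via a Deuring--Heilbronn repulsion argument shows that at most one modulus is affected and that its contribution is absorbed in the $O_A(x(\log x)^{-A})$ error, completing the proof.
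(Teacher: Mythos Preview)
The paper does not give its own proof of this statement: Theorem~\ref{thm: MurtyMurty} is quoted directly from Murty--Murty \cite{MM87} and used as a black box in the proof of Theorem~\ref{thm: chebeattyBV}. So there is nothing in the paper to compare your argument against.

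That said, your outline is a faithful high-level summary of the Murty--Murty strategy: character orthogonality on $G$ to isolate the trivial-character piece (handled by classical Bombieri--Vinogradov, giving the $1/2$), Brauer/Artin induction to rewrite the nontrivial Artin $L$-functions as products of Hecke $L$-functions over subfields, the condition $L\cap\QQ(\zeta_q)=\QQ$ to make $\Gal(L(\zeta_q)/\QQ)$ split as a direct product so that twisting by Dirichlet characters is well-defined, and then a large-sieve/zero-density argument over the resulting family, with the exceptional zero treated separately. One small caution: the exponent loss you attribute to the conductor of $\chi\otimes\chi_q$ scaling like $q^{\deg\chi}$ is the right mechanism, but the precise constant in $\theta$ that Murty--Murty obtain depends on the maximal degree of the irreducible constituents appearing (and on how one balances the Heath-Brown identity or Vaughan decomposition against the large sieve), not literally on $|G|$; the formulation $\min\{2/|G|,1/2\}$ in the paper is a convenient uniform lower bound rather than the sharp output of the conductor calculation. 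As a sketch your write-up is fine, but be aware that turning each of these steps into an actual estimate---especially the zero-density input for Hecke $L$-functions and the Deuring--Heilbronn step---is where all the work in \cite{MM87} lies.
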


Combining this statement with our $q$-effective prime number theorem, \cref{thm: q-effective PNTBeattyIntCheb} allows us to conclude our proof of \cref{thm: chebeattyBV}. For $\tau$ the type of $\alpha$ and $d$ the dimension of the fixed of an element $\sigma \in C$, we will end up taking the following level of distribution:
\begin{equation}
\label{eq: LevelOfDistribution}
\theta_{\cP_{C,\alpha,\beta}} :=  \frac{1}{125\cdot 10^{d}\cdot d(d+1)\tau}.
\end{equation}

\begin{proof}[Proof of \cref{thm: chebeattyBV}]
We have by Theorem~\ref{thm: q-effective PNTBeattyIntCheb} that $\pi_{C, \alpha, \beta}(y, q, a) =  \frac{1}{\alpha}\pi_C(y,q,a) + O_L(q^{d + 2}y^{1 - \kappa})$ where we are taking $\kappa = 10^{-d}/(125 d \tau)$ where $d$ is the dimension of the fixed field of an element $\sigma \in C$. First set $\theta_{\cP_{C,\alpha,\beta}} = \min(\kappa/(d + 1) , \theta_C) = \kappa/(d+1)$ where $\theta_C$ is the Chebotarev $\theta$ as in Theorem~\ref{thm: MurtyMurty}, and $\kappa = \kappa_{K(\zeta_{Q})}$ as in \cref{thm: q-effective PNTBeattyIntCheb}. Then set $\theta = \theta_{\cP_{C,\alpha,\beta}} - \eps$ for any $\eps > 0$.  We take maxima and sum over moduli $q \le Q := x^{\theta}$ with $K \cap \QQ[\zeta_q] = \QQ$. Then for some constant $A_L$ depending only on $L$, we have by the triangle inequality that
\begin{align*}
    \sum_{q\le x^\theta}&\max_{\substack{y \le x\\(a, q) = 1}}\Big|\pi_{C, \alpha, \beta}(y; q, a) - \dfrac{|C|}{\varphi(q)|G|\alpha}\pi(y)\Big| \\
    &= \sum_{q\le x^\theta}\max_{\substack{y \le x\\(a, q) = 1}}\Big|\frac{1}{\alpha}\pi_C(y; q, a) - \dfrac{|C|}{\alpha\varphi(q)|G|}\pi(y) + O_L(q^{d+1}y^{1-\kappa})\Big| \\
    &=\frac{1}{\alpha}\sum_{q\le x^\theta}\max_{\substack{y \le x\\(a, q) = 1}}\Big|\pi_C(y; q, a) - \dfrac{|C|}{\varphi(q)|G|}\pi(y)\Big| +\sum_{q\le x^\theta}\Big|A_Lq^{d+1}x^{1 - \kappa}\Big|
    \\
    &=O_A\Big( \dfrac{x}{\alpha(\log x)^A} + x^{\kappa - (d+1)\eps}O_L(x^{1-\kappa})\Big)\\
    &=O\Big( \dfrac{x}{(\log x)^A}\Big), \numberthis
\end{align*}
where we use \cref{thm: MurtyMurty} and \cref{thm: q-effective PNTBeattyIntCheb} to bound each of the two terms term.
\end{proof}

\section{Bounding Exponential Sums with Chebotarev Conditions}\label{Kaneproofsection}

We now turn our attention to proving \cref{thm: Gexponentialsumbound}, which bounds the magnitude of the sum $G_{L,C,X,q,a}(\alpha)$. By the discussion in the foregoing section, this problem reduces to placing a $q$-effective bound on the sum defined below.

\begin{definition}
\label{definition: twisted G}
Suppose that $L/\QQ$ is a finite Galois extension with $G=\on{Gal}(L/\QQ)$, $C$ a conjugacy class of $G$, $\chi$ a Dirichlet character with modulus $q$, and $X$ a positive real number.  We define the function 
\[
G_{L, C, X, \chi}(\alpha) = \sum_{\substack{p\le X}}\one_{\cP_C}(p)\log(p)e(\alpha p)\chi(p).
\]
\end{definition}

Then by orthogonality of Dirichlet characters, we have that
\begin{equation}
\label{eq: DirichletOrthogonality}
G_{L, C, X,q,a}(\alpha) = \frac{1}{\varphi(q)}\sum_{\chi}\overline{\chi(a)}G_{L,C,X,\chi}(\alpha),
\end{equation}
so it suffices to bound $G_{L,C,X,\chi}$ for all $\chi$ of modulus $q$.  We can further use orthogonality of Hecke characters (see \cite[Chapter~VII]{N99} for more on Hecke characters) to decompose these sums into certain functions $F_{K, \xi, X,\chi}$. First, given a number field $K/\QQ$, we need the \emph{von Mangoldt function} on integral ideals of $\mathcal{O}_K$ given by
\[
\Lambda_K(\mathfrak{a}):= 
\begin{cases}
\log \on{N}\mathfrak{p}& \mathfrak{a} = \mathfrak{p}^r, r \ge 1\\
0 & \text{ otherwise}.
\end{cases}
\]
\begin{definition}
\label{definition: twisted F}
If $K/\QQ$ is a number field, $\xi$ a Gr\"{o}ssencharacter of $K$, $\chi$ a Dirichlet character with modulus $q$, and $X$ a positive number, define the function 
\[
F_{K, \xi, X, \chi}(\alpha) = \sum_{N(\mathfrak{a})\le X}\Lambda_K(\mathfrak{a})\xi(\mathfrak{a})e(\alpha \on{N}(\mathfrak{a}))\chi(\on{N}(\mathfrak{a})),
\]
where the sum above is over ideals $\mathfrak{a}$ of $L$ with norm at most $X$.
\end{definition}
The purpose of switching from $\one_{\cP_{C}}(\mathfrak{p})\log \on{N}\mathfrak{p}$ to $\Lambda_K(\mathfrak{a})$ is for a future application of Vaughan's identity in the proof of \cref{proposition: qdependentKanesum}. The next proposition, which is a variant of  \cite[Proposition 6]{K13}, gives us an orthogonality statement for the $F_{K,C,X,\chi}$.
\begin{proposition}\label{prop: our Kane 6}

Let $L/\QQ$ be a finite Galois extension and $C$ a conjugacy class of $G=\Gal(L/\QQ)$. Choose a representative $\sigma \in C$ and let $K:= L^{\langle\sigma \rangle}$.  Then we have that 
\[
G_{L, C, X,\chi} = \dfrac{|C|}{|G|}\Big(\sum_{\xi}\overline{\xi(c)}F_{K,\xi, X, \chi}(\alpha)\Big) + O\Big(\sqrt{X}\log X\Big),
\]
where the sum is over characters $\xi$ of the subgroup $\langle c\rangle \subset G$, which can be thought of as (ray class) characters of $L$.
\end{proposition}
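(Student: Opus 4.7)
The strategy is to expand the right-hand side. First I reduce $F_{K,\xi,X,\chi}(\alpha)$ to a sum over residue-degree-one unramified primes of $K$; then I apply character orthogonality on the cyclic group $\langle c\rangle = \on{Gal}(L/K)$ to isolate primes $\mathfrak{p}$ with $\on{Art}_{L/K}(\mathfrak{p}) = c$; and finally I use a Galois-theoretic count to translate this back to the Chebotarev condition $[\on{Frob}_p] = C$.

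For the reduction, $\Lambda_K$ is supported on prime powers $\mathfrak{p}^r$, and the contributions from $r\ge 2$ or from $\mathfrak{p}$ of residue degree $\ge 2$ over $\QQ$ both force the rational prime below $\mathfrak{p}$ to be at most $\sqrt{X}$, yielding $O(\sqrt{X}\log X)$ by a trivial bound. Since the ramified primes divide $\Delta_L$, their contribution is absorbed into the same error. Thus
\[
F_{K,\xi,X,\chi}(\alpha) = \sum_{\substack{p \le X\\ p \nmid \Delta_L}}\log(p)\,e(\alpha p)\,\chi(p)\sum_{\substack{\mathfrak{p} \mid p\\ N(\mathfrak{p}) = p}}\xi(\mathfrak{p}) + O\bigl(\sqrt{X}\log X\bigr).
\]

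For the orthogonality step, each residue-degree-one unramified $\mathfrak{p}$ has a well-defined Artin symbol $\on{Art}_{L/K}(\mathfrak{p}) \in \langle c\rangle$, and under the identification of $\xi$ with a character of $\langle c\rangle$ we have $\xi(\mathfrak{p}) = \xi(\on{Art}_{L/K}(\mathfrak{p}))$. Orthogonality on $\langle c\rangle$ gives
\[
\sum_{\xi}\overline{\xi(c)}\,\xi(\mathfrak{p}) = |\langle c\rangle|\cdot \one_{\{\on{Art}_{L/K}(\mathfrak{p}) = c\}}.
\]
For a fixed unramified $p$, I would then count primes $\mathfrak{p}\mid p$ in $K$ with $N(\mathfrak{p})=p$ and $\on{Art}_{L/K}(\mathfrak{p})=c$. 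Such $\mathfrak{p}$ biject with primes $\mathfrak{P}$ of $L$ above $p$ satisfying $\on{Frob}_{\mathfrak{P}} = c$: when this holds, the decomposition group of $\mathfrak{P}$ over $\mathfrak{p}$ equals $\langle c\rangle$, so $\mathfrak{P}$ is the unique prime of $L$ above $\mathfrak{p}$. If $[\on{Frob}_p] \neq C$ no such $\mathfrak{P}$ exists; otherwise, fixing one $\mathfrak{P}_0$ with $\on{Frob}_{\mathfrak{P}_0}=c$, the remaining primes above $p$ are $\{\tau\mathfrak{P}_0 : \tau \in G/\langle c\rangle\}$ with Frobenius $\tau c \tau^{-1}$, so those with Frobenius $c$ are indexed by $C_G(c)/\langle c\rangle$, giving $|C_G(c)|/|\langle c\rangle|$ primes.

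Assembling these pieces,
\[
\sum_{\xi}\overline{\xi(c)}\,F_{K,\xi,X,\chi}(\alpha) = |C_G(c)|\cdot G_{L,C,X,\chi}(\alpha) + O\bigl(\sqrt{X}\log X\bigr),
\]
and substituting $|C_G(c)| = |G|/|C|$ and rearranging gives the proposition. The main delicate point is the Galois-theoretic counting in step three; the twist $e(\alpha N(\mathfrak{a}))$ and the Dirichlet character $\chi(N(\mathfrak{a}))$ depend only on the rational prime $p$ below $\mathfrak{p}$, so they factor through the inner sum without interacting with the character-orthogonality argument.
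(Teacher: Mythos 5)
Your argument is correct and is precisely the proof the paper invokes by citing Kane's Proposition 6: reduce $F_{K,\xi,X,\chi}$ to residue-degree-one unramified primes at cost $O(\sqrt{X}\log X)$, apply orthogonality of characters of $\langle c\rangle=\Gal(L/K)$, and use the Galois count $|C_G(c)|/|\langle c\rangle|=|G|/(|C||\langle c\rangle|)$ of degree-one primes of $K$ above $p$ with Artin symbol $c$, all of which is unaffected by the extra factor $\chi(\on{N}(\mathfrak{a}))$ since it depends only on the rational prime below $\mathfrak{a}$. The paper's own proof is a one-line deferral to Kane plus exactly that last observation, so you have simply filled in the details it elides.
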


\begin{proof}
The proof is identical to that of \cite[Proposition 6]{K13}, with the key idea being that ideals with norm equal to a larger prime power contribute $O\Big(\sqrt{X}\log X\Big)$ to the sum. The only difference is that both sides of the equality are multiplied by a Dirichlet character. 
\end{proof}

In his work, Kane proves a proposition concerning the value of $F_{K,\xi,X} := F_{K, \xi, X,\chi_0}$ where $\chi_0$ is the trivial character of modulus $1$ (the identity). Our $q$-effective version of this proposition is very similar, except with an extra factor of $q^{d+1}$.


\begin{proposition}\label{proposition: qdependentKanesum}
Let $K$ be a degree $d$ number field, $\xi$ a Gr\"{o}ssencharacter, and $\chi$ a Dirichlet character with modulus $q$. Let $\alpha$ be an irrational number with rational approximation of denominator $s$ with $XB^{-1} > s > B$ for some $B > 0$. Then we have the estimate $F_{K,\xi,X,\chi}(\alpha)=$
\[
O\Big(q^{d+1}X\Big(\log^2(X)B^{-10^{-d}/12} + \log^2(X)X^{-10^{-d}/60}+\log^2(X)X^{-10^{-d}/10}+\log^{2+d^2/2}(X)B^{-1/12}\Big)\Big),
\]
where the asymptotic constant may depend on $K$ and $\xi$, but not on $X$, $s$, $q$, $B$, or $\alpha$.
\end{proposition}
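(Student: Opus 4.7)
The plan is to adapt Kane's argument~\cite{K13} for the untwisted sum $F_{K,\xi,X,\chi_0}$ to our twisted setting, tracking how the insertion of the Dirichlet character $\chi$ of modulus $q$ inflates the error by a controlled power of $q$. The heuristic is that $\chi \circ \on{N}$ is periodic on $\mathcal{O}_K$ with period $q\mathcal{O}_K$, so it behaves like a finite-order Hecke character of conductor dividing $q\mathcal{O}_K$; since $|\mathcal{O}_K/q\mathcal{O}_K| = q^d$, one expects the cost of the twist to be at most a $q^d$ factor, together with one extra power of $q$ arising from the Weyl step.

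First, I would apply the number-field form of Vaughan's identity to $\Lambda_K(\mathfrak{a})$, decomposing $F_{K,\xi,X,\chi}(\alpha)$ into the familiar Type I and Type II bilinear sums over ideals of bounded norm, each now carrying the extra weight $\chi(\on{N}\mathfrak{a})\,\xi(\mathfrak{a})\,e(\alpha \on{N}\mathfrak{a})$. For the Type I pieces, I would sum first over the inner variable $\mathfrak{b}$ in a factorization $\mathfrak{a}=\mathfrak{d}\mathfrak{b}$. Because $\chi(\on{N}\mathfrak{b})$ depends only on the class of $\mathfrak{b}$ modulo $q\mathcal{O}_K$, I split the inner sum into the $q^d$ residue classes modulo $q\mathcal{O}_K$; on each class the twist by $\chi$ is a constant and the remaining sum $\sum_{\mathfrak{b}} \xi(\mathfrak{b})\,e(\alpha \on{N}\mathfrak{b})$, restricted to an ideal-arithmetic progression, is exactly the object Kane bounds. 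Summing the $q^d$ classes produces the announced $q^d$ factor.

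For the Type II pieces, I would apply Cauchy--Schwarz in the outer variable, square out, and arrive at an inner linear sum of the form $\sum_{\mathfrak{a}} e(\alpha \on{N}\mathfrak{a}(\on{N}\mathfrak{b}-\on{N}\mathfrak{b}'))$. Splitting $\mathfrak{a}$ into residue classes modulo $q\mathcal{O}_K$ again contributes $q^d$, and the standard Weyl-type bound $\sum_{n \le N}\min(N,\snorm{\beta n}^{-1})$ applied under the rational approximation $|\alpha-r/s|<1/s^2$ with $B<s<X/B$ reproduces Kane's four savings $B^{-10^{-d}/12}$, $X^{-10^{-d}/60}$, $X^{-10^{-d}/10}$, and $B^{-1/12}$, decorated by the log powers from the divisor-type inequalities in the Cauchy--Schwarz step. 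The final extra factor of $q$ in $q^{d+1}$ enters because, having restricted $n$ to a residue class, one loses one power of $q$ in the Weyl estimate when converting from the full interval $[1,N]$ to the shortened progression.

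The main obstacle I anticipate is combinatorial bookkeeping: one has to check that on each of the $q^d$ residue classes the rational approximation to $\alpha$ remains in Kane's admissible range $B<s<X/B$ (this is automatic since the approximation depends only on $\alpha$, not on the class), and that the Gr\"ossencharacter $\xi$ and the class-restriction do not conspire to destroy the orthogonality used in the Vaughan decomposition. Once these two compatibilities are verified, the pieces assemble into the claimed estimate $q^{d+1}X\cdot E(X,B)$, with the implicit constant depending on $K$ and $\xi$ through Kane's original constants and on no other parameter. A uniform analogue of \cite[Proposition~6]{K13} for the twisted sum, obtained by the same ray-class-character argument as in Proposition~\ref{prop: our Kane 6}, then finishes the reduction from $G_{L,C,X,q,a}(\alpha)$ to $F_{K,\xi,X,\chi}(\alpha)$ and yields Theorem~\ref{thm: Gexponentialsumbound} via the orthogonality relation~\eqref{eq: DirichletOrthogonality}.
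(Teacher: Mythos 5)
Your high-level plan matches the paper's skeleton — Vaughan's identity over ideals, then bound the resulting Type I and Type II pieces — but the mechanism by which you propose to absorb the Dirichlet twist $\chi$ is different from the paper's, and in its present form it has genuine gaps.

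The paper does \emph{not} split at the ideal level into the $q^d$ cosets of $q\mathcal{O}_K$. Instead it keeps the twist $\chi(\on{N}\mathfrak{a})$ all the way through the geometry-of-numbers reduction (the $q$-effective version of Kane's Lemma 21, here Proposition~\ref{proposition: Our 21}), reduces to one-dimensional polynomial exponential sums $\sum_{x\le X}e(\alpha P(x))\chi(Q(x))$ with $Q\in\ZZ[x]$, and only then splits by residue classes modulo the rational integer $q$ (Lemma~\ref{lemma: Our 20}). There the periodicity $Q(nq+a)\equiv Q(a)\pmod q$ makes $\chi$ constant on each class, and Kane's untwisted Lemma 20 applies with leading coefficient $q^k c$, yielding exactly the $q^k$ factor. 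This is clean precisely because the split is over $q$ classes of rational integers, not $q^d$ ideal classes. Your version needs, for each of the $q^d$ cosets $\mathfrak{b}\equiv\mathfrak{b}_0\pmod{q\mathcal{O}_K}$, a bound on $\sum\xi(\mathfrak{b})e(\alpha\on{N}\mathfrak{b})$ restricted to that coset; this is \emph{not} "exactly the object Kane bounds." It is an analogous sum on a sub-lattice whose covolume has been scaled by $q^d$, and Kane's constants in Lemma 21 and Corollary 22 are not tracked in the covolume. Recovering the right $q$-dependence for these restricted sums would force you to redo Kane's geometry-of-numbers argument in a $q$-uniform way on sub-lattices anyway — at which point you have simply rediscovered the paper's route to Lemma~\ref{lemma: Our 20} and Proposition~\ref{proposition: Our 21} under a different name.

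Two further concrete problems. First, in your Type II step you propose both applying Cauchy--Schwarz \emph{and} splitting into $q^d$ cosets. But once you have applied Cauchy--Schwarz the character enters only through $|\chi|^2 \le 1$, so the coset split is unnecessary and wastes a factor of $q^d$. Indeed the paper handles the third (genuine bilinear) term of Vaughan's decomposition exactly as Kane does, noting that the Dirichlet characters disappear at unit magnitude under Cauchy--Schwarz, and no $q$-power is introduced there at all; the $q^{d+1}$ in the final bound comes only from the two Type I pieces via Corollary~\ref{corollary: Our 22} and Proposition~\ref{proposition: Our 21}. Second, your jump from the ideal-coset sum directly to the classical $\sum_{n\le N}\min(N,\snorm{\beta n}^{-1})$ Weyl bound skips the geometry-of-numbers reduction from $d$ dimensions to $1$; this is the technical core of Kane's argument and cannot be elided. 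Your explanation for the extra power of $q$ ("losing one power of $q$ when passing to a shortened progression") doesn't match the actual source of $q^{k}$ in Lemma~\ref{lemma: Our 20}, which is the rescaling of the leading coefficient of $P$ under $x\mapsto qx+a$; this matters because the former argument would make the power of $q$ depend on how you chopped the interval, while the latter pins it to the polynomial degree.
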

In order to arrive at this result, we will need $q$-analogues of many results of Kane. We follow his argument, making the necessary adjustments where needed.

\subsection{$q$-effective versions of results of Kane}
With Proposition~\ref{proposition: qdependentKanesum} as a goal, we cite and adapt several lemmas from Kane \cite{K13}.  The first needs no modification.

\begin{lemma}
\label{lemma: Kane 18}
\emph{\cite[Lemma 18]{K13}}
Let $X, A, C$ be positive integers.  Let $\alpha$ be a real number with rational approximation of denominator $s$.  Suppose that for some $B>2A$, that $XB^{-1}>s>B$.  Then there exists a set $S$ of natural numbers so that
\begin{itemize}
    \item Elements of $S$ are of size at least $\Omega(BA^{-1})$.
    \item The sum of the reciprocals of the elements of $S$ is $O(A^2B^{-1}+X^{-1}A^4C)$.
    \item For all positive integers $n\le C$, either $n$ is a multiple of some element of $S$ or $n\alpha$ has a rational approximation with some denominator $s'$ with $XA^{-1}n^{-1}>s'>A$.
\end{itemize}
\end{lemma}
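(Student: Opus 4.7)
The plan is to apply Dirichlet's approximation theorem to $n\alpha$ for each $n \le C$, and then convert failures of this approximation into clean divisibility obstructions on $n$ by triangulating against the hypothesized approximation $r/s$ of $\alpha$ (with $\gcd(r,s)=1$, $|\alpha - r/s| < 1/s^2$, and $B < s < X/B$). The set $S$ will comprise the divisors of $s$ arising from the main obstruction, together with a smaller collection capturing secondary failures.

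First, for each $n \le C$, apply Dirichlet's theorem with $Q := X/(An)$ to obtain coprime integers $(a,q)$ with $1 \le q \le Q$ and $|n\alpha - a/q| \le 1/(qQ)$. If $q > A$, then $s' := q$ witnesses $A < s' < X/(An)$ and $n$ need not be captured by $S$. So suppose $q \le A$. Triangulating with the bound $|n\alpha - nr/s| < n/s^2$, the integer $k := as - qnr$ satisfies
\[
|k| \;\le\; \frac{sAn}{X} + \frac{qn}{s} \;=\; O\!\left(\frac{An}{B}\right),
\]
using $q \le A$, $s > B$, and $s < X/B$.

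In the main sub-case $k = 0$, the relation $as = qnr$ together with $\gcd(r,s) = 1$ gives $s \mid qn$, hence $s/\gcd(s,q) \mid n$; thus the bad $n$ is a multiple of a divisor of $s$ of the form $s/d$ with $d \mid s$, $d \le A$. Placing all such divisors into $S$ contributes elements each of size $\ge s/A > B/A$, with reciprocal sum at most $\sum_{d \mid s,\, d \le A} d/s \le A^2/s < A^2/B$, which is the first term in the stated bound. In the secondary sub-case $k \neq 0$, the congruence $qnr \equiv -k \pmod s$ places $n$ in an arithmetic progression modulo $s/\gcd(s,q)$ (solvable only when $\gcd(s,q) \mid k$), and one must convert these progressions into genuine divisibility obstructions. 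A count of the $O(A^2 C/B)$ valid $(k,q)$ pairs (using $|k| = O(AC/B)$ as $n$ ranges over $[1,C]$, and $q \le A$) combined with element sizes $\Omega(B/A)$ and the hypothesis $s < X/B$ yields the $O(A^4 C/X)$ contribution.

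The main obstacle is the secondary sub-case: when $k \neq 0$, the argument only forces $n$ into an arithmetic progression rather than into a divisibility relation, so one cannot directly translate it into the ``$n$ is a multiple of some element of $S$'' language of the lemma. Bootstrapping these progressions into bona fide divisibility obstructions, while simultaneously retaining the sharp lower bound $\Omega(B/A)$ on element size and the sharp $O(A^4 C/X)$ upper bound on the reciprocal sum, is the heart of the combinatorial bookkeeping and likely proceeds via a recursive application of the Dirichlet--triangulation argument to the sub-problem defined by the offending congruences. The extra factor of $A^2$ in the $A^4 C/X$ term relative to the $A^2/B$ term reflects that two successive layers of approximation are being chained in this recursion.
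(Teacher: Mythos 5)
The paper does not prove this lemma itself; it is quoted verbatim as Lemma~18 of Kane \cite{K13} and used as a black box, so there is no internal proof to compare against. Your reconstruction has the right skeleton: applying Dirichlet's theorem to $n\alpha$ with cutoff $Q = X/(An)$, triangulating against the approximation $r/s$ of $\alpha$ to produce $k = as - qnr$ with $|k| < \frac{sAn}{X} + \frac{qn}{s} = O(An/B)$, and handling the vanishing case $k = 0$ by placing the divisors $s/d$ (for $d \mid s$, $d \le A$) into $S$, with the correct size lower bound $s/A > B/A$ and reciprocal-sum bound $O(A^2/B)$.

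The gap you flag in the $k \neq 0$ case, however, is not quite where you locate it. Converting the congruence condition on $n$ into the divisibility language of the lemma is immediate: for each remaining bad $n$ (those with $q_n \le A$ and $k_n \neq 0$), one simply places $n$ itself into $S$, since $n$ is trivially a multiple of $n$; the lemma's phrasing is flexible enough to accommodate singletons. The required size constraint $n = \Omega(B/A)$ then comes for free from $1 \le |k_n| < 2An/B$. No recursive Dirichlet--triangulation argument is needed. The genuine work your sketch omits is bounding $\sum 1/n$ over these singleton insertions by $O(A^4 C/X)$. That estimate comes from a direct two-parameter count: for fixed $q \le A$ and $k$ with $\gcd(q,s) \mid k$ and $0 < |k| < 2An/B$, the congruence $q n r \equiv -k \pmod{s}$ confines $n$ to an arithmetic progression modulo $s/\gcd(q,s)$; combining the $\ll A \cdot AC/(B\gcd(q,s))$ admissible pairs $(q,k)$ with the interval length $C$, the lower threshold $n > B|k|/(2A)$, and the hypothesis $s \in (B, X/B)$ yields the stated bound, with the extra factor of $A^2$ over the $k=0$ term absorbing the sums over $q$ and $k$. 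Your heuristic that the additional $A^2$ reflects a second chained layer of approximation is a natural guess, but the mechanism is this elementary count rather than a recursion.
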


The following two lemmas are versions of Lemmas 19 and 20 in \cite{K13}, twisted by a Dirichlet character.

\begin{lemma}
\label{lemma: Our 19}
Fix $K$ a number field.  Let $n$ be a positive integer, let $\chi$ be a Dirichlet character of modulus $q$, and let $X$ and $\eps$ be positive real numbers.  Then for any $\eps>0$, we have that
\begin{align*}
    \sum_{\substack{n|\on{N}(\mathfrak{a}) \\ \on{N}(\mathfrak{a})<X}}\dfrac{\chi(\on{N}(\mathfrak{a}))}{\on{N}(\mathfrak{a})} &= O\Big(\dfrac{\log(X)n^{\eps}}{n}\Big), \\ 
    \sum_{\substack{n|\on{N}(\mathfrak{ab}) \\ \on{N}(\mathfrak{ab})<X}}\dfrac{\chi(\on{N}(\mathfrak{ab}))}{\on{N}(\mathfrak{ab})} &= O\Big(\dfrac{\log^2(X)n^{\eps}}{n}\Big),
\end{align*}
where the implied constant depends on $K$ and $\eps$, but nothing else. (The first sum above is over ideals $\mathfrak{a}$ such that $n|\on{N}(\mathfrak{a})$ and $\on{N}(\mathfrak{a})\le X$.  The second sum is over pairs of ideals $\mathfrak{a}$ and $\mathfrak{b}$, such that $\on{N}(\mathfrak{a}\cdot\mathfrak{b})$ satisfies the same conditions.)
\end{lemma}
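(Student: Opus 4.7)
The plan is to reduce both twisted bounds immediately to the untwisted versions, which are exactly Lemma~19 of \cite{K13}. Since $|\chi(m)| \le 1$ for every integer $m$ (with $\chi(m) = 0$ whenever $\gcd(m, q) > 1$), the triangle inequality yields
\[
\Bigl|\sum_{\substack{n \mid \on{N}(\mathfrak{a}) \\ \on{N}(\mathfrak{a}) < X}} \frac{\chi(\on{N}(\mathfrak{a}))}{\on{N}(\mathfrak{a})}\Bigr| \le \sum_{\substack{n \mid \on{N}(\mathfrak{a}) \\ \on{N}(\mathfrak{a}) < X}} \frac{1}{\on{N}(\mathfrak{a})},
\]
and the analogous inequality for the second sum. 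Because this reduction discards $\chi$ entirely, the implied constant automatically inherits the $q$-independence of Kane's original statement, matching the claim that it depends only on $K$ and $\eps$.

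For completeness, I would then sketch the untwisted bound. Let $r_K(m)$ denote the number of integral ideals of $\mathcal{O}_K$ of norm $m$; this is a multiplicative function with $r_K(p^e) = O_d((e+1)^d)$, where $d = [K:\QQ]$. Regrouping ideals by their norm and setting $\on{N}(\mathfrak{a}) = nk$, the first sum becomes $\sum_{k \le X/n} r_K(nk)/(nk)$. Comparing local factors at each prime yields a submultiplicative estimate of the form $r_K(nk) \ll_{K,\eps} n^\eps\, r_K(k)$; substituting and invoking the Dedekind-zeta residue bound $\sum_{k \le Y} r_K(k)/k \ll_K \log Y + 1$ produces the desired $O(\log(X)\,n^\eps/n)$.

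For the second sum I would factor $\on{N}(\mathfrak{ab}) = \on{N}(\mathfrak{a})\on{N}(\mathfrak{b})$ and, fixing $m = \on{N}(\mathfrak{a})$, apply the first bound to the inner sum over $\mathfrak{b}$ with $n$ replaced by $n' := n/\gcd(n,m)$. Summing the resulting estimate over $m < X$ weighted by $r_K(m)/m$ contributes a second factor of $\log X$, delivering the stated $\log^2(X)\,n^\eps/n$. The only delicate point in the entire proof is the submultiplicative estimate on $r_K(nk)$: it must be done carefully enough to isolate an $n^\eps$ rather than the weaker $(nk)^\eps$ that a naive divisor bound would give. Once that is in hand, the $\chi$-twist is cosmetic.
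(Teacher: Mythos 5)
Your proposal is correct and uses essentially the same reduction as the paper: bound the twisted sum by the triangle inequality using $|\chi|\le 1$, then invoke Kane's Lemma~19 for the untwisted version, so the implied constant is automatically independent of $q$. The additional sketch of the untwisted estimate is accurate background but not needed, as the paper simply cites Kane for that step.
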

\begin{proof}
The statement of \cite[Lemma 19]{K13} is the same as the one we wish to prove except that $\chi$ is specialized to the trivial character; that is, the numerator is $1$.  The magnitude of the left hand side is maximized when the trivial character is taken, so the result holds by Lemma 19 of \cite{K13}.
\end{proof}

\begin{lemma}
\label{lemma: Our 20}
Pick a positive integer $X$.  Let $[X]=\{1, 2, \ldots, X\}$.  Let $P$ be a rational polynomial with leading term $cx^k$ for some integer $c\neq 0$, and let $Q$ be an integral polynomial.  Let $\alpha$ be a real number with a rational approximation of denominator $s$.  Let $\chi$ be a Dirichlet character with modulus $q$.  Then 
\[
\Big|\sum_{x\in [X]}e(\alpha P(x))\chi(Q(x))\Big| =O\Bigg( q^k|c|X\Big(\dfrac{1}{s}+\dfrac{q}{X}+\dfrac{sq^k}{X^k}\Big)^{10^{-k}}\Bigg),
\]
where the implied constant depends only on $k$.
\end{lemma}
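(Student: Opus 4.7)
The plan is to exploit the fact that the map $x \mapsto \chi(Q(x))$ is periodic modulo $q$, in order to reduce to the untwisted case, which is Kane's original Lemma 20 (corresponding to $q = 1$, $\chi \equiv 1$). The first step is to partition the sum into residue classes modulo $q$:
\[
\sum_{x\in [X]} e(\alpha P(x))\chi(Q(x)) = \sum_{r = 1}^{q} \chi(Q(r)) \sum_{\substack{x \in [X]\\ x \equiv r \mod q}} e(\alpha P(x)).
\]
For each fixed $r$, the substitution $x = r + qy$ converts the inner sum into a sum of length $Y_r \asymp X/q$ of the form $\sum_{y} e(\alpha P_r(y))$, where $P_r(y) := P(r + qy)$ is a polynomial in $y$ of degree $k$ with integer leading coefficient $cq^k$.

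Next, I would apply the untwisted Kane Lemma 20 to each inner sum, with polynomial $P_r$ (leading coefficient $cq^k$), the unchanged rational approximation of $\alpha$ with denominator $s$, and sum length $Y_r$. This yields, for each $r$, the bound
\[
O\!\left(|c|q^k \cdot \frac{X}{q} \left(\frac{1}{s} + \frac{q}{X} + \frac{sq^k}{X^k}\right)^{10^{-k}}\right).
\]
Bounding $|\chi(Q(r))| \le 1$ and summing over the $q$ residue classes contributes an additional factor of $q$, which combines with the $X/q$ above to produce the claimed
\[
O\!\left(|c|q^k X \left(\frac{1}{s} + \frac{q}{X} + \frac{sq^k}{X^k}\right)^{10^{-k}}\right).
\]

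The main point requiring care is that Kane's untwisted estimate tracks the leading coefficient of the polynomial and the denominator of the rational approximation of $\alpha$ as separate quantities. Under $x = r + qy$ the leading coefficient inflates from $c$ to $cq^k$, but the Diophantine hypothesis still concerns $\alpha$ itself (not $cq^k\alpha$), so the denominator $s$ does not change. The three error-term contributions $\tfrac{1}{s}$, $\tfrac{1}{Y_r}$, $\tfrac{s}{Y_r^k}$ in Kane's bound, with $Y_r \asymp X/q$, then expand to $\tfrac{1}{s}$, $\tfrac{q}{X}$, $\tfrac{sq^k}{X^k}$ as required. Beyond this bookkeeping and a trivial handling of the regime $q \ge X$ (where the claimed bound is vacuous and the trivial estimate $\le X$ suffices), I do not anticipate any substantive obstacle, since the argument is a clean periodicity reduction to a known estimate.
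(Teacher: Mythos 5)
Your proof is correct and takes essentially the same approach as the paper: split the sum into residue classes modulo $q$ using the periodicity of $\chi(Q(\cdot))$, apply Kane's untwisted Lemma~20 to each inner sum over an arithmetic progression (noting the leading coefficient inflates to $cq^k$ while the Diophantine approximation of $\alpha$ is unchanged), and sum trivially over the $q$ classes. The only cosmetic difference is that the paper restricts the outer sum to $(\ZZ/q\ZZ)^\times$ whereas you sum over all residues and bound $|\chi(Q(r))|\le 1$, which is if anything the more careful bookkeeping.
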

\begin{proof}
Call the sum in question $S(\alpha, \chi)$. Since $Q$ is an integral polynomial, we have that $Q(nq + a) \equiv Q(a)\pmod q$.
Thus, we can break up the sum in question in accordance with the residue class of $x$ modulo $q$:
\[
|S(\alpha, \chi)| = \Big|\sum_{x\in [X]}e(\alpha P(x))\chi(Q(x))\Big| = \Big|\sum_{a \in (\ZZ/q\ZZ)^\times}\chi(Q(a))\sum_{\substack{x \le \lfloor\frac{X - a}{q}\rfloor}}e(\alpha P(qx + a)) \Big|.
\]
Now, upon applying Lemma 20 of Kane and noting that the leading coefficient of $P(qx + a)$ is $q^kc$, we get that 
\begin{align*}
S(\alpha, \chi) &\le \sum_{a \in (\ZZ/q\ZZ)^\times}\Big|\chi(Q(a))\sum_{\substack{x \le \lfloor\frac{X - a}{q}\rfloor}}e(\alpha P(qx + a))\Big| \\
&=O_L\Bigg( \sum_{a \in (\ZZ/q\ZZ)^\times} |q^kc|\cdot \frac{X - a}{q}\Big(\frac{1}{s} + \frac{q}{X- a} + \frac{sq^k}{(X - a)^k}\Big)^{10^{-k}}\Bigg)\\
&=O_L\Bigg( q^k |c| X \Big(\frac{1}{s} + \frac{q}{X} + \frac{sq^k}{X^k}\Big)^{10^{-k}}\Bigg).
\end{align*}
\end{proof}

These lemma are vital for the main theoretical input of Kane, which is a geometry of numbers argument used to bound an exponential sum without a Von Mangoldt weighting. We now prove the $q$-analogue of this result.

\begin{proposition}
\label{proposition: Our 21}
Assume that $K$ is a number field of degree $d$, $\xi$ is a Gr\"{o}ssencharacter of modulus $\mathfrak{m}$, and $\chi$ is a Dirichlet character of modulus $q$.  Then for any positive number $X$ and a real number $\theta$ with a rational approximation of denominator $s$, we have the bound
\begin{equation}\label{eq:what we want}
    \Big|\sum_{\on{N}(\mathfrak{a})\leq X}\xi(\mathfrak{a})e(\alpha \on{N}(\mathfrak{a}))\chi(\on{N}(\mathfrak{a}))\Big| = O_{L,\xi}\Big(q^{d+1}X\Big(\frac{1}{s} + \frac{1}{X^{1/d}} + \frac{s}{X}\Big)^{10^{-d}/2}\Big).
\end{equation}
\end{proposition}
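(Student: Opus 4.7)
The plan is to follow the argument of Kane \cite[Proposition~21]{K13}, inserting the Dirichlet character twist throughout and tracking the $q$-dependence by invoking \cref{lemma: Our 20} in place of Kane's Weyl-type estimate. The strategy is a geometry-of-numbers reduction: parameterize integral ideals by lattice points in $\RR^d$, write the norm as a homogeneous polynomial of degree $d$ in the integer coordinates, and then apply the one-dimensional polynomial exponential-sum bound of \cref{lemma: Our 20} in the innermost variable after fixing the other coordinates.

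First, I would partition the sum over the (finite) ideal class group of $K$. Fixing an integral representative $\mathfrak{c}$ of each inverse class, ideals $\mathfrak{a}$ in the class $[\mathfrak{c}^{-1}]$ are parameterized as $\mathfrak{a} = (\beta)\mathfrak{c}$ with $\beta \in \mathfrak{c}^{-1}$ taken modulo the unit group $\mathcal{O}_K^\times$. Choosing a fundamental domain $F$ for $\mathcal{O}_K^\times$ acting on $K_\RR^\times$ via the Minkowski embedding, the relevant $\beta$ correspond to lattice points of $\mathfrak{c}^{-1}$ inside $F$ of norm at most $X/\on{N}(\mathfrak{c})$. Writing $\beta = \sum n_i e_i$ in a $\ZZ$-basis of $\mathfrak{c}^{-1}$, the norm $\on{N}(\beta)$ becomes a homogeneous polynomial $P(n_1,\ldots,n_d)$ of degree $d$ with integer coefficients in the lattice coordinates.

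Next, following Kane, I would subdivide $F$ (restricted by the norm bound) into $O(\log^{O(1)} X)$ boxes $B$ on which the continuous Gr\"ossencharacter $\xi$ is nearly constant, i.e., can be replaced by a fixed value $\xi_B$ with admissible error. On each box the relevant sum takes the form
\[
\xi_B \sum_{(n_1,\ldots,n_d) \in B \cap \ZZ^d} e\bigl(\alpha P(n_1,\ldots,n_d)\bigr) \chi\bigl(P(n_1,\ldots,n_d)\bigr),
\]
plus a controllable approximation error. Fixing the outer $d-1$ coordinates and applying \cref{lemma: Our 20} to the remaining variable (for which $P$ is a polynomial of degree $d$ and size $X^{1/d}$), then summing trivially over $O(X^{(d-1)/d})$ choices of the fixed coordinates, yields a per-box bound of
\[
O\!\left(q^d \, X \left(\frac{1}{s} + \frac{q}{X^{1/d}} + \frac{s q^d}{X}\right)^{\!10^{-d}}\right).
\]
Summing over boxes (absorbed into the polylogarithmic losses) and the $O(1)$ ideal classes, and then simplifying the three-term parenthetical by pulling out the $q$-dependent factors and paying a square-root loss to reach the cleaner shape, gives the claimed estimate with prefactor $q^{d+1}$ and exponent $10^{-d}/2$.

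The principal obstacle is twofold. First, one must construct the fundamental-domain decomposition so that the number of boxes is polylogarithmic while $\xi$ varies by a sufficiently small amount on each, which is the main analytic input carried over essentially unchanged from \cite{K13}. Second, and more novel, is the bookkeeping required to massage $\bigl(\tfrac{1}{s} + \tfrac{q}{X^{1/d}} + \tfrac{s q^d}{X}\bigr)^{10^{-d}}$ into $q \bigl(\tfrac{1}{s} + \tfrac{1}{X^{1/d}} + \tfrac{s}{X}\bigr)^{10^{-d}/2}$, thereby consolidating the $q$-contributions into a single factor $q^{d+1}$ while halving the exponent; this involves case analysis on the relative sizes of $s$, $X$, and $q$ and is where the factor of $2$ loss in the exponent is incurred.
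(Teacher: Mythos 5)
Your high-level strategy matches the paper's (partition into classes, parameterize ideals by lattice points, reduce to a one-variable twisted Weyl sum and apply \cref{lemma: Our 20}), but two steps that look innocuous in your sketch are precisely where the paper has to do real work, and your version of them is incorrect.

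First, you partition by the \emph{ideal} class group and write $\mathfrak{a}=(\beta)\mathfrak{c}$ with $\beta\in\mathfrak{c}^{-1}$. For the Gr\"ossencharacter to factor as $\xi(\beta)\xi(\mathfrak{c})$ with $\xi(\beta)$ given by a continuous archimedean character $\psi$, you need $\beta\equiv 1\pmod{\mathfrak{m}}$, i.e., you must partition by \emph{ray} classes of modulus $\mathfrak{m}$ (as Kane and the paper do), not ideal classes. This is not cosmetic: without it the inner summand $\xi((\beta)\mathfrak{c})$ is not a smooth function of $\beta$ and the box/lines reduction breaks.

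Second — and this is the central technical obstacle — you assert that writing $\beta=\sum n_i e_i$ in a $\ZZ$-basis of the fractional ideal $\mathfrak{c}^{-1}$ makes $\on{N}(\beta)$ ``a homogeneous polynomial of degree $d$ with \emph{integer} coefficients.'' This is false in general: the norm form in coordinates of a fractional-ideal basis is only a rational polynomial. Kane works with rational polynomials because $e(\alpha P(\cdot))$ does not care, but \cref{lemma: Our 20} crucially requires the polynomial inside $\chi$ to be \emph{integral} (the proof there uses $Q(nq+a)\equiv Q(a)\pmod q$, which fails for merely integer-valued rational polynomials). The paper handles this by working with the norm form in a fixed integral basis of $\mathcal{O}_K$, choosing $k$ with $\mathfrak{a}_0^k$ trivial in the ray class group, picking generators $(\mathfrak{a}_0^k b)=(g_{k,b})$, and feeding $P(g_{k,b}+v_k x)\in\ZZ[x]$ into the character — this bookkeeping is the genuinely new content of the proposition and your sketch skips it entirely. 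Relatedly, fixing ``the outer $d-1$ coordinates'' and varying the last does not automatically give a degree-$d$ polynomial with a leading coefficient independent of the fixed coordinates; one needs to restrict to lines in a \emph{fixed} direction $v$ (with $v$ scaled so the leading coefficient is a nonzero integer), which is how Kane's argument is structured and why the paper keeps it.

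Your final ``consolidation'' of $\bigl(\tfrac{1}{s}+\tfrac{q}{X^{1/d}}+\tfrac{sq^d}{X}\bigr)^{10^{-d}}$ into $q(\tfrac{1}{s}+\tfrac{1}{X^{1/d}}+\tfrac{s}{X})^{10^{-d}/2}$ is plausible, but the real losses come from the two points above; as written the proposal would not yield \eqref{eq:what we want} without addressing the integrality issue.
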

\begin{proof}
 This proof follows much in the same way as \cite[Lemma 21]{K13}. We will recall the setup of that proof, and indicate where we make modifications. Kane's proof requires an application of Lemma 20 in his paper, for which we have the $q$-effective analogue \cref{lemma: Our 20}. Our bound has the extra condition that the polynomial $q(x)$ inside the character $\chi$ must be \emph{integral}, not just rational. No other serious modifications are required. First, divide the sum into ray classes of modulus $\mathfrak{m}$. There are only finitely many such ray classes, so it suffices to prove our result for a single class. In other words, we bound the sum over $\mathfrak{a}$ in a given class. 

Fix an integral ideal representative $\mathfrak{a}_0$ of a ray class. Setting $K_{\mathfrak{m}}^1$ be the elements of $K$ that are congruent to $1\mod \mathfrak{m}$, every integral ideal in the class of $\mathfrak{a}_0$ can be written as $b\mathfrak{a}_0$ for some $b\in K_{\mathfrak{m}}^1\cap \mathfrak{a}_0^{-1}$. The representation is unique up to multiplication by an element in $\mathcal{O}_K^{\ast}\cap K_{\mathfrak{m}}^1$. By the multiplicativity of  Gr\"{o}ssencharacters, $\xi(b\mathfrak{a}_0)=\xi(b)\xi(\mathfrak{a}_0)$.  Because $\xi$ has modulus $\mathfrak{m}$ and $b\in K_{\mathfrak{m}}^1$, we can approximate $\xi(b)=\psi(b)$ by a continuous character $\psi: (K\otimes \RR)^\ast\rightarrow \mathbb{C}^\ast$, where $\psi(\mathcal{O}_K^*\cap K_{\mathfrak{m}}^1)=1$. The multiplicativity of the norm implies that $\on{N}_{K/\QQ}(b\mathfrak{a}_0)=|\on{N}_{K/\QQ}(b)|\on{N}_{K/\QQ}(\mathfrak{a}_0)$.

Kane then implements a geometry of numbers argument. First, note that $T:= K_{\mathfrak{m}}^1\cap \mathfrak{a}_0^{-1}$ is a translate of a lattice in $K\otimes \RR$. Kane uses the fact that $\on{N}_{K/\QQ}(b)$ is a degree $d$ polynomial with rational coefficients in terms of a basis for this lattice.

For our situation, however, we want an integral polynomial. In fact, upon fixing a free basis for $\mc{O}_K$, we have that $\on{N}_{K/\QQ}$ has integral coefficients on $K$ (not on the lattice). Call this polynomial $P(b)$ where $b = (x_1,\dots, x_n)$ is in the chosen integral basis for $K/\QQ$. To make better sense of the norm applied to $\mathfrak{a}_0b$, we choose $k$ such that $\mathfrak{a}_0^k$ is trivial in the ray class group, and rewrite 
$\on{N}_{K/\QQ}(\mathfrak{a}_0b) = \on{N}_{K/\QQ}(g_{k,b})/\on{N}_{K/\QQ}(\mathfrak{a}_0^{k-1})$ where we choose a generator $(\mathfrak{a}_0^kb) = (g_{k,b})$ for the principal ideal.

The sum that we need to compute is over all of the $b$ in the lattice with norm not exceeding $X/\on{N}_{K/\QQ}(\mathfrak{a}_0)$ in a fundamental domain of the action of $\mathcal{O}_K^{\ast}\cap K_{\mathfrak{m}}^1$.  First take $D$ to be a fundamental domain for the action of $\mathcal{O}_K^{\ast}\cap K_{\mathfrak{m}}^1$ in $(K \otimes \RR)^{\ast}$. Then letting $R := D \cdot (0, (X/\on{N}_{K/\QQ}(\mathfrak{a}_0))^{1/d})$, the over the desired fundamental
\begin{align} \label{eq:Kane21suminquestion}\frac{\xi(\mathfrak{a}_0)}{\chi(N_{K/\QQ}(\mathfrak{a}_0^{k-1}))} \sum_{b\in T\cap R} \psi(b) e(\alpha|P(b)|)\chi(|P(g_{k,b})|), 
\end{align}
where we note $P(b)$ is an integral polynomial on the fixed basis of $\mathcal{O}_K$. Kane notes that $P(b)$ will have a constant sign on connected components of $R$, since $P$ extends to a continuous function on $R$.  Therefore, by restricting the sum to a single connected component of $R$, we can ignore the absolute value of $P$ taken in the sum.

The main idea is to reduce \eqref{eq:Kane21suminquestion} to a sum in $1$ dimension, which can be handled by Lemma~\ref{lemma: Our 20}. In this case, we want our summands to be of the form $e(\alpha p(x))\chi(q(x))$ for $p \in \QQ[x]$ with an integer leading term and $q(x) \in \ZZ[x]$. In order to do this, pick a vector $v \in K$, $t + v \in T$ for all $t \in T$. Then $P(b+vx)$ is one variable degree $d$ polynomial in $n$ whose rational leading coefficient does not depend on $b$. By replacing $v$ with a positive integral multiple of itself, we can assume that the leading coefficient is a non-zero integer.
Moreover, first choosing an integral generator $(v_{k}) = \mathfrak{a}_0^kv$, we have that the polynomial inside the character $\chi$ is $Q(b + vx) := P(\mathfrak{a}_0^k(b + vx)) = P(g_{k,b} + v_kx)$. Since $g_{k,b},v_k$ are algebraic integers and $P$ is an integer polynomial, $Q(b + vx)$ also has integer coefficients.

With this choice, the rest of the argument directly follows Kane. First fixing an integer $Y= \Theta(X^{1/2d})$, he defines a \emph{line} in $T$ to be a subset of $T$ of the form $\{b+v, b+2v,\dots,b+Yv\}$ for $b\in T$.  Each element of $T$ is contained in $Y$ lines, so we can rewrite \eqref{eq:Kane21suminquestion} as \begin{align}\label{eq:linessum}
 \frac{\xi(\mathfrak{a}_0)}{\chi(\on{N}_{K/\QQ}(\mathfrak{a}_0^{k-1}))Y}\sum_{\textrm{lines }N} \sum_{b\in N\cap R} \psi(b) e(\alpha P (b))\chi(Q(b)). 
 \end{align}

Kane breaks up the sum based on whether $N$ is contained in $R$. 
For lines not contained in $R$, Kane's argument applies and we get a bound $O(X^{1 - 1/d}Y)$.  When a line $N$ contained in $R$, his arguments show the contribution of the inner summand in \eqref{eq:linessum} is
\[ \min(Y^2|(b+v)^{-1}|, Y) + \sum_{n=1}^Y e(\alpha p_b(n))\chi(q_b(n)).\]

By Lemma \ref{lemma: Our 20}, the second term is \[ q^k|c|Y\Big(\dfrac{1}{s}+\dfrac{q}{Y}+\dfrac{sq^k}{Y^k}\Big)^{10^{-k}}.
\]

Summing over the lines, we get \[q^k|c|X\Big(\dfrac{1}{s}+\dfrac{q}{X}+\dfrac{sq^k}{X^k}\Big)^{10^{-k}}.\] 

Lastly, Kane's original arguments show that
\[\sum_{ \text{lines }N\subset R}O(\min(Y^2|(b+v)^{-1}|,Y)) = O(X^{1 - 1/d}Y),
\] 
which is again within the desired bounds.
\end{proof}
Applying Abel summation and Lemma~\ref{proposition: Our 21} yields the following corollary.
\begin{corollary}
\label{corollary: Our 22}
Fix $K$ a number field of degree $d$ and $\xi$ a Gr\"{o}ssencharacter of modulus $\mathfrak{m}$.  Then given a positive number $X$ and an irrational number $\alpha$ which has a rational approximation of denominator $s$, we have that
\[
\Big|\sum_{\on{N}(\mathfrak{a})\le X}\log(\on{N}(\mathfrak{a}))\xi(\mathfrak{a})e(\alpha \on{N}(\mathfrak{a}))\chi(\on{N}(\mathfrak{a})) \Big| = O_{L, \xi}\Big(q^{d+1}X\log (X)\Big(\dfrac{1}{s}+\dfrac{1}{X^{1/d}}+\dfrac{s}{X}\Big)^{10^{-d}/2}\Big).
\]
\end{corollary}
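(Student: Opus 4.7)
The plan is to deduce the corollary from \cref{proposition: Our 21} by a straightforward application of Abel summation, peeling the logarithm weight off the exponential sum. Define $f(\mathfrak{a}) := \xi(\mathfrak{a})e(\alpha \on{N}(\mathfrak{a}))\chi(\on{N}(\mathfrak{a}))$ and the partial sums $S(Y) := \sum_{\on{N}(\mathfrak{a})\le Y} f(\mathfrak{a})$. \cref{proposition: Our 21} gives the pointwise bound
\[
|S(Y)| = O_{L,\xi}\!\Bigl(q^{d+1}Y\bigl(\tfrac{1}{s}+\tfrac{1}{Y^{1/d}}+\tfrac{s}{Y}\bigr)^{10^{-d}/2}\Bigr)
\]
valid for every $Y \ge 1$, and Abel summation rewrites the sum in the corollary as
\[
\sum_{\on{N}(\mathfrak{a})\le X} \log(\on{N}(\mathfrak{a})) f(\mathfrak{a}) = \log(X)\,S(X) - \int_1^X \frac{S(t)}{t}\,dt.
\]

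The boundary term $\log(X)\,S(X)$ is immediately of the claimed size. For the integral, I would insert the pointwise bound on $|S(t)|$ and use subadditivity $(a+b+c)^r \le a^r + b^r + c^r$ for $r := 10^{-d}/2 \in (0,1)$ to split the integrand into three pieces. The resulting integrals $\int_1^X s^{-r}\,dt = X s^{-r}$, $\int_1^X t^{-r/d}\,dt = O(X^{1-r/d})$, and $\int_1^X (s/t)^r\,dt = O(s^r X^{1-r})$ each match against one of the three terms in $E := \bigl(\tfrac{1}{s}+\tfrac{1}{X^{1/d}}+\tfrac{s}{X}\bigr)^{10^{-d}/2}$ after the factor $X$ is pulled out, so every contribution is $O(q^{d+1} X \cdot E)$, which is absorbed by the target bound.

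The argument is essentially routine, and I do not expect a genuine obstacle. The only thing to watch is that \cref{proposition: Our 21} is invoked at every intermediate $t \in [1,X]$ rather than only at $t=X$; since the hypothesis involves only $\alpha$ and its rational approximation $s$ and not the upper limit of summation, this causes no difficulty. In regimes where the pointwise bound would exceed the trivial $|S(t)|\le O(t)$, the estimate is merely wasteful, but the integrated contribution is still absorbed into the three pieces above, so no auxiliary case-splitting is needed. The proof is thus a direct adaptation of the corresponding Abel-summation step in \cite{K13}, with the only real change being the explicit propagation of the $q^{d+1}$ factor supplied by \cref{proposition: Our 21}.
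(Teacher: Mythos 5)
Your proposal is correct and takes the same route as the paper, which states only that the corollary follows from Abel summation applied to Proposition~\ref{proposition: Our 21}. Your fleshed-out computation — Abel summation to obtain the boundary term $\log(X)\,S(X)$ and the integral $\int_1^X S(t)/t\,dt$, followed by subadditivity of $x\mapsto x^r$ for $r=10^{-d}/2$ to split and evaluate the integral, with each piece absorbed into $q^{d+1}X\log(X)E$ — is exactly the (unwritten) content of the paper's proof.
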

Using the above lemmata, we can now attain our necessary estimate of $F_{K, \xi, \chi, X}(\alpha)$.

\begin{proof}[Proof of \cref{proposition: qdependentKanesum}] This proof adapts \cite[Proposition 23]{K13}, which in turn adapts Theorem 12.6 in \cite{IK04}, which is an application of Vaughan's identity.  We first see that
the necessary generalization of Equation (13.39) in \cite{IK04} still applies.  Letting $y=z=X^{2/5}$, we find that $F_{K, \xi, X,\chi}(\alpha)$ is the same as 

\begin{align}
    &\sum_{\substack{\on{N}(\mathfrak{ab})\leq X\\ \on{N}(\mathfrak{a})<X^{2/5}}} \mu(\mathfrak{a})\xi(\mathfrak{a})\log(\on{N}(\mathfrak{b})) e(\alpha \on{N} (\mathfrak{b})\on{N}(\mathfrak{a}))\chi(\on{N}(\mathfrak{a})) \\
    &- \sum_{\substack{\on{N}(\mathfrak{abc})\le X \\ \on{N}(\mathfrak{b}), \on{N}(\mathfrak{c})\le X^{2/5}}}\mu(\mathfrak{b})\Lambda_L(\mathfrak{c})\xi(\mathfrak{bc})\xi(\mathfrak{a})e(\alpha \on{N}(\mathfrak{bc})\on{N}(\mathfrak{a}))\chi(\on{N}(\mathfrak{abc})) \\
    &+ \sum_{\substack{\on{N}(\mathfrak{abc})\le X \\ \on{N}(\mathfrak{b}), \on{N}(\mathfrak{c})\ge X^{2/5}}}\mu(\mathfrak{b})\Lambda_L(\mathfrak{c})\xi(\mathfrak{ac})\xi(\mathfrak{b})e(\alpha \on{N}(\mathfrak{ac})\on{N}(\mathfrak{b}))\chi(\on{N}(\mathfrak{abc})) + O(X^{2/5}).
\end{align}

As in \cite{K13}, we bound the first time using Corollary~\ref{corollary: Our 22} on the sum over $\mathfrak{b}$.  Let $A=B^{1/4}\le X^{1/8}$. By Lemmata~\ref{lemma: Kane 18} and~\ref{lemma: Our 19}, we can bound the sum over terms where $\alpha \on{N}(\mathfrak{a})$ has no rational approximation with denominator between $A$ and $\frac{X}{A\on{N}(\mathfrak{a})}$ by 
\[
O\Big(X\Big(\log^2(X)(A^3B^{-1}+X^{-3/5}A^4)\Big(\dfrac{B}{A}\Big)^{\eps}\Big)\Big) = O(X\log^2(X)B^{-1/4+\eps}).
\]
For other values of $\mathfrak{b}$, Corollary~\ref{corollary: Our 22} bounds the sum as
\[
O(q^{d+1}X\log^2(X)(B^{-1/4}+X^{-3/5d})^{10^{-d}/2}).
\]

The second term is bounded similarly.  We let $A=\min(B^{1/4}, X^{1/41})$, and use Lemmas~\ref{lemma: Kane 18} and~\ref{lemma: Our 19} to bound the sum over terms with $\mathfrak{b}$ and $\mathfrak{c}$ such that $\on{N}(\mathfrak{bc})\alpha$ has no rational approximation with denominator $s$ between $A$ and $\frac{X}{A\on{N}(\mathfrak{bc})}$ by 
\[
O\Big(X\log^3(X)\Big(\dfrac{B}{A}\Big)^\eps (B^{-1/4}+X^{-1}A^4X^{4/5})\Big) = O(X\log^3(X)(B^{-1/4+\eps} + X^{-1/10})).
\]
Using \cref{proposition: Our 21}, we bound the sum over other values of $\mathfrak{b}$ and $\mathfrak{c}$ as 
\[
O(q^{d+1}X\log^2(X)(A^{-1}+X^{-1/5d})^{10^{-d}/2}).
\]

The final sum can be bounded exactly as in Proposition 23 of \cite{K13}.  The only difference is that we have factors of Dirichlet characters, but when they are inserted into the Cauchy-Schwarz argument the fact that their magnitude is $1$ implies that the same bounds hold.  So indeed, the final bound of
\[
O(X\log^2(X)(A^{-1}+X^{-2/5d})^{10^{-d}/4})
\]
holds.
Putting all of this together, we get the desired bound for $F_{K, \xi, X, \chi}(\alpha)$.

\end{proof}
\begin{proof}[Proof of \cref{thm: Gexponentialsumbound}]
This follows immediately by the orthogonality relations given by Equation \eqref{eq: DirichletOrthogonality}, \cref{prop: our Kane 6}, and \cref{proposition: qdependentKanesum}.
\end{proof}

\section{Bounded Gaps and Dense Clusters}\label{boundedGapsSect}

In \cite{M16}, Maynard extracts additional information about sets of well-distributed primes.  His results allow one to derive an asymptotic count for the number of primes with small gaps (i.e. the primes in dense clusters). 
To be precise, he stipulates the following. 
Given a set of integers $\cA$, a set of primes $\cP$, and $\mc{L} = \{L_1,\dots, L_k\}$ an admissible collection of linear functions $L(n)=l_1n+l_2$, we define
\begin{align}
    &\cA(x)=\{n\in\cA : x\le n < 2x\}, \qquad \cA(x; q, a) = \{n\in \cA(x), n\equiv a \mod q\}, \\ 
    &L(\cA) = \{L(n): n\in \cA\}, \qquad \varphi_L(q)=\varphi(|l_1|q)/\varphi(|l_1|), \\ 
    &\cP_{L, \cA}(x) = L(\cA(x))\cap \cP, \qquad \cP_{L, \cA}(x; q, a) = L(\cA(x; q, a))\cap \cP.
\end{align}

\begin{hypothesis}
\label{hypothesis: nice}
\cite{M16}
$(\cA, \cL, \cP, B, x, \theta)$.  Let $k = \#\cL$.
\begin{enumerate}
    \item $\cA$ is well distributed in arithmetic progressions: we have
    \[
    \sum_{q\le x^{\theta}}\max_{a}\Big|\#\cA(x; q, a)-\dfrac{\#\cA(x)}{q}\Big| \ll \dfrac{\#\cA(x)}{(\log x)^{100k^2}}.
    \]
    
    \item Primes in $L(\cA)\cap \cP$ are well distributed in most arithmetic progressions: for any $L\in\cL$ we have 
    \[
    \sum_{\substack{q\le x^{\theta} \\ (q, B) = 1}}\max_{(L(a), q)=1}\Big|\#\cP_{L, \cA}(x; q, a)-\dfrac{\#\cP_{L, \cA}(x)}{\varphi_L(q)}\Big| \ll \dfrac{\#\cP_{L, \cA}(x)}{(\log x)^{100k^2}}.
    \]
    
    \item $\cA$ is not too concentrated in any arithmetic progression: for any $q<x^{\theta}$ we have 
    \[
    \#\cA(x; q, a) \ll \dfrac{\#\cA(x)}{q}.
    \]
\end{enumerate}
\end{hypothesis}

Maynard proves the following result for sets satisfying Hypothesis~\ref{hypothesis: nice}.

\begin{theorem}
\label{thm: Maynard dense clusters}
[\cite{M16}, Theorem 3.1]
Let $\alpha > 0$ and $0<\theta < 1$.  Let $\mc{A}$ be a set of integers, $\mc{P}$ a set of primes, $\mc{L}=\{L_1, \ldots, L_k\}$ an admissible set of $k$ linear functions, and $B, x$ integers.  Let $L_i(n)=a_in+b_i\in\mc{L}$ have coefficients that satisfy $0\le a_i, b_i\le x^{\alpha}$ for all $1\le i\le k$, and let $k\le (\log x)^\alpha$ and $1\le B\le x^\alpha$.  

There is a constant $D_{\alpha,\theta}$ depending only on $\alpha$ and $\theta$ such that the following holds.  If $k\ge D_{\alpha,\theta}$ and $(\mc{A}, \mc{L}, \mc{P}, B, x, \theta)$ satisfy Hypothesis~\ref{hypothesis: nice}, and if $\delta>(\log k)^{-1}$ is such that
\[
\dfrac{1}{k}\dfrac{\varphi(B)}{B}\sum_{L\in\mc{L}}\dfrac{\varphi(a_i)}{a_i}\#\mc{P}_{L, \mc{A}}(x)\ge \delta\dfrac{\#\mc{A}(x)}{\log x},
\]
then 
\[
\# \{n\in \mc{A}(x): \#(\{L_1(n), \ldots, L_k(n)\}\cap \mc{P})\ge D_{\alpha,\theta}^{-1}\delta \log k\} \gg \dfrac{\#\mc{A}(x)}{(\log x)^k\exp(D_{\alpha,\theta}k)}.
\]

Moreover, if $\cP=\mathbb{P}, k\le (\log x)^{1/5}$ and all $L\in\mc{L}$ have the form $an+b_i$ with $|b_i|\le (\log x)k^{-2}$ and $a\ll 1$, then the primes counted above can be restricted to be consecutive, at the cost of replacing $\exp(Ck)$ with $\exp(Ck^5)$ in the bound.
\end{theorem}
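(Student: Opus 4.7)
The plan is to establish this result via the Maynard--Tao multidimensional sieve, following the blueprint of \cite{M15, M16}. Fix an admissible $k$-tuple of linear forms $\mathcal{L} = \{L_1, \ldots, L_k\}$ with $L_i(n) = a_i n + b_i$. For a parameter $R = x^{\theta/2 - \eps}$, I would introduce sieve weights of the form
\[
w_n = \one_{n \equiv v_0 \pmod{B}} \cdot \left( \sum_{\substack{d_i \mid L_i(n) \\ (d_i,B)=1}} \lambda_{d_1, \ldots, d_k} \right)^2,
\]
where $\lambda_{d_1,\ldots,d_k}$ is built from a smooth cutoff $F \colon [0,\infty)^k \to \RR$ supported on the simplex $\{(t_1,\ldots,t_k) : \sum t_i \le 1\}$, vanishing unless $d_1 \cdots d_k < R$ and the $d_i$ are squarefree and pairwise coprime. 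The $W$-trick picks a residue $v_0$ modulo $B$ so that each $L_i(n)$ avoids obvious congruence obstructions, using clause (3) of Hypothesis~\ref{hypothesis: nice} to control local factors.

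Then I would estimate the two key sums
\[
S_1 = \sum_{n \in \mathcal{A}(x)} w_n, \qquad S_2 = \sum_{n \in \mathcal{A}(x)} w_n \cdot \#\{i \le k : L_i(n) \in \mathcal{P}\}.
\]
The first is controlled by opening the square and applying clause (1) of the hypothesis, which handles the resulting divisor sums up to modulus $R^2 < x^\theta$ with acceptable error. The second is controlled by swapping the order of summation so that the inner sum becomes a count of primes in an arithmetic progression; clause (2) lets one replace it by its expected main term. After Selberg's change of basis to diagonalize the quadratic form in the $\lambda$'s, both sums reduce to integrals in $F$ and its partial derivatives, and the ratio $S_2/S_1$ equals $(1+o(1)) \cdot M_k(F) \cdot \delta$, where $\delta$ is the hypothesized prime density from the theorem statement and $M_k(F)$ is the Maynard functional on the simplex. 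Choosing a near-optimal symmetric $F$ (for instance the Polymath eigenfunction) yields $M_k(F) \gg \log k$, so the expected number of prime values among $L_1(n),\ldots,L_k(n)$, weighted by $w_n$, is at least a constant multiple of $\delta \log k$.

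To pass from this weighted average to an unweighted count of $n$ with many prime values, I would invoke positivity of $w_n$: since each individual count is bounded by $k$, a standard second-moment argument shows that a positive fraction of the weighted mass is carried by $n$ for which $\#\{i : L_i(n) \in \mathcal{P}\} \ge D_{\alpha,\theta}^{-1} \delta \log k$. Converting a weighted fraction into an unweighted count of $n$ costs a factor of $(\log x)^k \exp(D_{\alpha,\theta} k)$, coming from the sup-norm of $w_n$ relative to its average, which yields the stated lower bound. For the consecutive primes refinement, I would follow the device of \cite{M16} using an auxiliary sieve upper bound for primes in short intervals $[n, n + H]$ with $H \ll (\log x) k^{-2}$, which explains both the restrictions on the coefficients and the replacement of $\exp(Ck)$ by $\exp(Ck^5)$.

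The main obstacle is the uniform sieve calculation of $S_1$ and $S_2$: one must carefully isolate the analytic inputs (the Bombieri--Vinogradov-style hypotheses) from the combinatorial variational problem for $F$, so that the bounds depend on $\mathcal{A}$ and $\mathcal{P}$ only through clauses (1)--(3). A secondary difficulty is tracking how the constants $D_{\alpha,\theta}$ absorb the losses from the $B$-smoothing, the admissibility of forms with coefficients as large as $x^\alpha$, and the optimization of $F$ near the boundary of the simplex; getting an explicit and effective $D_{\alpha,\theta}$ is what ultimately controls the $k$ in the bounded-gaps application of \cref{thm: Bounded gaps}.
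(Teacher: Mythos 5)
This statement is cited verbatim from Maynard's paper \cite{M16} (Theorem~3.1 there) and is not proved anywhere in this paper; the authors invoke it as a black box in Section~\ref{boundedGapsSect}. So there is no in-paper proof to compare against. Your sketch is, in outline, a faithful reconstruction of Maynard's own argument: the multidimensional Selberg sieve with $\lambda_{d_1,\ldots,d_k}$ built from a smooth $F$ on the simplex and support cut at $R = x^{\theta/2-\eps}$, the two moments $S_1$ and $S_2$ estimated via clauses (1) and (2) of Hypothesis~\ref{hypothesis: nice}, the Selberg diagonalization reducing $S_2/S_1$ to the Maynard functional, and finally the choice of $F$ giving a functional value $\gg \log k$ together with a positivity argument converting the weighted moment into the stated unweighted lower bound. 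Two small imprecisions are worth flagging. First, you conflate the parameter $B$ from the Hypothesis with the $W$-trick modulus: in Maynard's setup $W = \prod_{p \le D_0} p$ (with $D_0$ a slowly growing function of $x$) runs alongside $B$, and the sieve restricts to $(d_i, WB) = 1$; they are distinct. Second, the passage from the weighted average to the unweighted count of $n$ is a first-moment (Chebyshev/positivity) argument rather than a second-moment one: one shows $\sum_n w_n \bigl(\#\{i : L_i(n) \in \mathcal{P}\} - \rho\bigr) > 0$, which forces many $n$ to have at least $\rho$ prime images, and then bounds $\sup_n w_n$ to obtain the stated $\gg \#\mathcal{A}(x) / \bigl((\log x)^k \exp(D_{\alpha,\theta} k)\bigr)$. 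Neither point affects the overall correctness of the outline; the main work you leave implicit, as you yourself note, is the uniform bookkeeping that makes $D_{\alpha,\theta}$ effective.
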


Using Theorem~\ref{thm: Maynard dense clusters}, we now prove Theorem~\ref{thm: Bounded gaps}, which is a bounded gaps statement for $\mc{P}_{C, \alpha, \beta}$.  We do this by choosing our parameters to match the primes we are interested in, checking they satisfy the conditions, and translating the conclusion to match the desired statement. First we state a dense clusters theorem:

\begin{theorem}
Let $L/\QQ$ be a finite Galois extension with discriminant $\Delta_{L}$, and $\mc{B}_{n,\alpha,\beta} = \lfloor \alpha n + \beta \rfloor$ a Beatty sequence with $\alpha$ of finite type. Then there exists a constant $D_L$ only depending on the field $L$ such that if $m \in \NN$ and $k = \exp(D_L m)$, for any admissible set $\{n + b_1,\dots, n + b_k\}$ we have 
\[
\#\{x \le n \le 2x: \text{ at least $m$ of }n + b_i \in \mc{P}_{C,\alpha,\beta}\} \gg_m \frac{x}{(\log x)^{k}}.
\]
\end{theorem}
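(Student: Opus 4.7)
The plan is to directly invoke Maynard's dense clusters result (Theorem 6.2) with the primes $\mc{P} = \mc{P}_{C,\alpha,\beta}$, the full integer set $\mc{A} = \ZZ$ restricted to $[x, 2x)$, the admissible linear forms $\mc{L} = \{n+b_1, \dots, n+b_k\}$, level of distribution $\theta = \theta_{\mc{P}_{C,\alpha,\beta}}$ from \eqref{eq: LevelOfDistribution}, and modulus bookkeeping integer $B = \Delta_L$. With these choices, all three parts of Hypothesis \ref{hypothesis: nice} need to be checked, and then one must verify the density condition on $\delta$ so as to read off a lower bound on $m$ in terms of $k$.

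For Hypothesis \ref{hypothesis: nice}(1) and (3), the distribution and concentration of $\cA = [x, 2x) \cap \ZZ$ in arithmetic progressions is immediate since $\#\cA(x;q,a) = x/q + O(1)$. Part (2), the well-distribution of $\mc{P}_{L,\cA} = \mc{P}_{C,\alpha,\beta} \cap [x,2x)$ across residue classes, is exactly what \cref{thm: chebeattyBV} provides: the coprimality condition $(q,B) = (q,\Delta_L) = 1$ guarantees $L \cap \QQ(\zeta_q) = \QQ$, so the primed sum in \cref{thm: chebeattyBV} becomes unrestricted, and the Bombieri--Vinogradov style bound is power-saving in $\log x$, which certainly dominates $(\log x)^{100 k^2}$ once $k$ is fixed. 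The main obstacle here is verifying this for the specific linear shift $L(n) = n + b_i$ rather than for $\pi_{C,\alpha,\beta}(y;q,a)$ directly, but since $L(n) \equiv a \pmod q$ simply translates to $n \equiv a - b_i \pmod q$, one can reduce to our theorem after splitting off the negligible contribution from primes dividing $b_i - b_j$ (absorbed into $B$ if desired).

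Next, one verifies the density condition. By \cref{thm:chebeatty}, $\#\mc{P}_{L,\cA}(x) \sim \frac{|C|}{\alpha |G|} \cdot \frac{x}{\log x}$, so since each $a_i = 1$ (and we may take $B = \Delta_L$ so $\varphi(B)/B = \varphi(\Delta_L)/\Delta_L$), the left-hand side of the hypothesis in \cref{thm: Maynard dense clusters} becomes
\[
\frac{\varphi(\Delta_L)}{\Delta_L} \cdot \frac{|C|}{\alpha|G|} \cdot \frac{x}{\log x} \cdot (1+o(1)),
\]
while $\#\cA(x)/\log x \sim x/\log x$. Thus we may take $\delta = \frac{\varphi(\Delta_L) |C|}{2\Delta_L \alpha |G|}$, which is a positive constant depending only on $L$ and $\alpha$. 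Maynard's conclusion then delivers at least $D_{\alpha,\theta}^{-1} \delta \log k$ primes among $\{n+b_1, \dots, n+b_k\}$, and requiring this to exceed $m$ forces
\[
\log k \geq \frac{m D_{\alpha,\theta}}{\delta} = m \cdot \frac{2 D_{\alpha,\theta} \Delta_L \alpha |G|}{\varphi(\Delta_L) |C|},
\]
so setting $D_L$ to be this coefficient (which indeed depends only on $L$ and $\alpha$, with the $\theta$-dependence absorbed via \eqref{eq: LevelOfDistribution}) and taking $k = \exp(D_L m)$ yields the claimed count $\gg_m x / (\log x)^k$.

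The main technical obstacle is really nothing beyond confirming that the proof of \cref{thm: chebeattyBV} passes through the linear shift $n \mapsto n + b_i$ uniformly in $i$; one either re-derives it for $L(\cA)$ directly by translating residues, or one uses the admissibility of $\mc{H}$ (which forces $\prod_{i<j}(b_j-b_i)$ to have bounded prime divisors once $B$ swallows them) to treat the finitely many exceptional moduli trivially. Everything else is assembly of the pieces already in hand.
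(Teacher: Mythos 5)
Your proof is correct and follows essentially the same route as the paper: you invoke \cref{thm: Maynard dense clusters} with $\cA = \ZZ$, $\cL = \{n+b_i\}$, $B = \Delta_L$, verify Hypothesis~\ref{hypothesis: nice} via the trivial distribution of $\ZZ$ for parts (1) and (3) and \cref{thm: chebeattyBV} for part (2), and read off $D_L$ from the $D_{\alpha,\theta}^{-1}\delta\log k$ lower bound. Your density computation is in fact a bit more careful than the paper's own (which asserts $\#\cP_{L,\cA}(x) > \tfrac{x}{2\log x}$ without the $\tfrac{|C|}{\alpha|G|}$ factor), and your observation that $D_L$ necessarily depends on $\alpha$ as well (through $\tau$ and $\theta$) accurately reflects what the paper's argument actually delivers.
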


\begin{proof}
Let $\cA$ be the integers, let $\cL$ be an admissible set of the form $\{n+b_1, \ldots, n+b_k\}$, let $B$ be the discriminant $\Delta_L$ of the extension $L/\QQ$, and let $\cP = \cP_{C, \alpha, \beta}$ be the primes in a fixed Chebotarev class and Beatty sequence with $\alpha$ of finite type.
For the sake of convenience, we take $x$ large enough so that $b_i < x^{1/2}$ and $k \le (\log x)^{1/2}$, so that the conditions of \cref{thm: Maynard dense clusters} are satisfied.
We first check that these parameters do indeed satisfy Hypothesis~\ref{hypothesis: nice}.  $(1)$ and $(3)$ are trivially true for all the integers.  For $(2)$, by Theorem~\ref{thm: chebeattyBV} and the triangle inequality, we have 

\begin{align*}
    \sum_{\substack{q\le x^{\theta} \\ (q, B) = 1}}\max_{(L(a), q)=1}\Big|\#\cP_{L, \cA}(x; q, a)-\dfrac{\#\cP_{L, \cA}(x)}{\varphi_L(q)}\Big| 
    &\ll \sum_{\substack{q\le x^{\theta} \\ (q, B) = 1}}\max_{(L(a), q)=1} \Big|\pi_{C, \alpha, \beta}(2x+b_k-1; q, a) \\
    &-\dfrac{\pi_{C, \alpha, \beta}(2x+b_k-1)}{\varphi(q)}\Big| \\ 
    &\qquad+ \Big|\pi_{C, \alpha, \beta}(x+b_1-1; q, a)-\dfrac{\pi_{C, \alpha, \beta}(x+b_1-1)}{\varphi(q)}\Big| \\  
    &\ll \dfrac{\#\cP_{L, \cA}(x)}{(\log x)^{100k^2}}.
\end{align*}
For sufficiently large $x$, we have $\#\cP_{L, \cA}(x)>\dfrac{x}{2\log x}$ for all $L\in\mc{L}$ by the prime number theorem.  This implies that  
\[
\dfrac{1}{k}\dfrac{\varphi(B)}{B}\sum_{L\in\mc{L}}\dfrac{\varphi(a_i)}{a_i}\#\mc{P}_{L, \mc{A}}(x)\ge \dfrac{1}{k\Delta_L}\dfrac{kx}{2\log x}   \ge \dfrac{1}{2\Delta_L}\dfrac{\#\mc{A}(x)}{\log x}.
\]
Thus by Theorem~\ref{thm: Maynard dense clusters}, when $(2 \Delta_L)^{-1} > (\log x)^{-1}$ we have
\[
\# \{x\le n < 2x: \#(\{L_1(n), \ldots, L_k(n)\}\cap \mc{P}_{C, \alpha, \beta})\ge D^{-1}(2\Delta_L)^{-1}\log k\} \gg \dfrac{x}{(\log x)^k\exp(Dk)}.
\]
Setting $k \ge \lceil\exp(2mD\Delta_L)\rceil$, or equivalently $D_L \ge 2D\Delta_L$ yields the desired result.

\end{proof}

\begin{proof}[Proof of Theorem~\ref{thm: Bounded gaps}]

 Choose $b_j= p_{\pi(k)+j}$ where $j\in [1,k]$ and $p_j$ is the $j$th prime.  For $n\geq 6$,

$$n\log n + n\log \log n - n < p_n < n\log n + n \log \log n$$
If $n\geq 355991$, 
$$\frac{n}{\log n } \Big( 1+ \frac{1}{\log n }\Big) \leq \pi(n)\leq \frac{n}{\log n }\Big( 1 + \frac{1}{\log n}+\frac{2.51}{(\log n)^2}\Big)$$
Combining these two statements and 
doing an explicit Mathematica computation for small cases (see \cite{T14}), we have
$p_{\pi(k)+k}-p_{\pi(k)+1}\leq 1.6k \log k$
for all $k\geq 213$.  Then if $q_i$ is the $i$th prime in $\mc{P}_{C, \alpha, \beta}$, after adjusting $D$ so that $D \ge 1.6\cdot 213 \cdot \log(213)$, we get that
\[
\liminf_{n\rightarrow\infty}(q_{n+m}-q_n) \le mD\Delta_L\exp(2mD\Delta_L),
\]
as desired.

\end{proof}

\section{Arithmetic progressions in bounded gaps for Chebotarev intersect Beatty}\label{GTCB}
We can apply the formalism of prime gaps to prove \cref{corollary: GreenTaoMain}, a common generalization of the Green--Tao theorem and bounded gaps for primes in $\mc{P}_{C,\alpha,\beta}$. The theorem shows that we can find collections of arbitrarily long arithmetic progressions with bounded gaps.
This result will follow from a general criterion of Pintz, which marries the GPY method with Green--Tao arguments.

\subsection{The GPY setup}

Let $\mc{H} = \{h_1,\dots, h_k\}$ be an admissible set. Following Thorner \cite{T14} we let
\[
U = \prod_{\substack{p \le \log \log \log N\\p \nmid \Delta_L}}p
\]
for some positive integer $N$ to be chosen later.
This quantity differs from the $W$ in the Maynard--Tao arguments by the primes that divide $\Delta_L$, in order to exclude those moduli that ramify.
We first define 
\[
S_1(N,\cP_{C,\alpha,\beta}) \coloneqq \sum_{\substack{N\le n\le 2N \\ n\equiv u_0\pmod U}}
\Big(\sum_{d_i|n+h_i, \forall i}\lambda_{d_1, \ldots, d_k} \Big)^2,
\]
and
\[
S_2^{(m)}(N, \cP_{C, \alpha, \beta})\coloneqq \sum_{\substack{N\le n\le 2N \\ n\equiv u_0\pmod U}}
\chi_{\cP_{C,\alpha, \beta}}(n+h_m)
\Big(\sum_{d_i|n+h_i, \forall i}\lambda_{d_1, \ldots, d_k} \Big)^2.
\]
which sum to
\[
S_2(N, \cP_{\alpha, \beta}) = \sum_{i=1}^kS_2^{(i)}(N, \cP_{C,\alpha, \beta}).
\]
for weights $\lambda_{d_1,\dots, d_k}$ defined in terms of $U,F,$ and $R = N^{\theta/2 - \delta}$ where $\theta$ is the level of distribution.
The Maynard--Tao program seeks to show that 
\[
S_2(N,\cP_{C,\alpha,\beta}) - \rho S_1(N,\cP_{C,\alpha,\beta}) > 0
\]
for appropriate choices of weights $\lambda_{d_1,\dots, d_k}$ as well as $N, \rho$. This would then show at least $\rho$ primes in the interval $[N,2N)$ must be primes in a given set.
Maynard achieves such a result by finding formulas for $S_1,S_2$ as integral operators. The difference between our situation and Maynard's is that we exclude moduli that divide $\Delta_L$ in our Bombieri--Vinogradov estimate. Thorner precisely addresses this problem in case of Chebotarev classes.  We first define these integral operators and estimate $S_1$ and $S_2$.

\begin{definition}
\label{def:integral operators}
\cite{M15}
Let $F$ be a smooth function supported on $\cR_k = \{(x_1, \ldots, x_k)\in [0,1]^k : \sum_{i=1}^kx_i\le 1\}$.  Then define
\begin{align*}
    I_k(F) &\coloneqq \int_0^1\cdots\int_0^1 F(t_1, \ldots, t_k)^2dt_1\cdots dt_k, \\ 
    J_k^{(m)}(F) &\coloneqq \int_{0}^1\cdots \int_{0}^1\Big(\int_0^1 F(t_1, \ldots, t_k)dt_i\Big)^2dt_1\cdots dt_{i-1}dt_{i+1}\cdots dt_k.
\end{align*}
\end{definition}
The $S_1$ sum only has dependence on the excluded moduli, so our sum is the same as Thorner's:
\begin{proposition}[Proposition 4, \cite{T14}]
If $\cP$ has level of distribution $\theta > 0$ then 
\[
S_1 = (1 + o(1))\frac{\varphi(W)^kN(\log R)^k}{W^{k+1}}I_k(F),
\]
where $I_k^{(m)}(F)$ is defined as in Definition~\ref{def:integral operators}.
\end{proposition}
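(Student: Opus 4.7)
The plan is to follow the standard Maynard--Tao Selberg-sieve computation of $S_1$, adapted to the nonstandard definition of $U$ used here (which excludes only the primes dividing $\Delta_L$ rather than including all primes up to $\log\log\log N$ as in the original Maynard--Tao framework). The first step is to specify the weights $\lambda_{d_1,\dots,d_k}$ in the Maynard parametrization: choose them supported on tuples $(d_1,\dots,d_k)$ with $\prod_i d_i$ squarefree, coprime to $U$, and bounded by $R$, and express them through a M\"obius-type inversion from auxiliary variables
\[
y_{r_1,\dots,r_k} = F\Big(\tfrac{\log r_1}{\log R},\dots,\tfrac{\log r_k}{\log R}\Big)\,\mathbf{1}_{(\prod_i r_i,\,U)=1}\,\mathbf{1}_{\mu(\prod_i r_i)^2 = 1}.
\]

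Next I would expand the square defining $S_1$, interchange the outer sum over $n$ with the inner product over $(\mathbf{d},\mathbf{e})$, and use the Chinese Remainder Theorem (valid because the support of the weights forces $(U,\prod_i [d_i,e_i])=1$ and admissibility of $\mc{H}$ prevents collisions modulo small primes) to rewrite the inner count as $N/(U\prod_i [d_i,e_i]) + O(1)$; the $O(1)$ errors sum to $O(R^2) = O(N^{\theta-2\delta})$, which is absorbed into the $(1+o(1))$. Switching to the auxiliary variables $(\mathbf{r},\mathbf{s})$ and applying the classical Selberg diagonalization identity $\sum_{d\mid r,\,e\mid s}\mu(d)\mu(e)de/[d,e]=\mathbf{1}_{r=s}$ factors the double sum across primes, reducing $S_1$ to an Euler product times $\sum_{\mathbf{r}} y_{\mathbf{r}}^2/\prod_i \varphi(r_i)$. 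A standard Mellin/Perron argument (or Maynard's explicit prime-by-prime evaluation) identifies this latter sum with $(1+o(1))(\log R)^k I_k(F)$ up to an Euler-product constant, and matching the local factors against the prefactor $N/U$ yields the claimed main term $(1+o(1))\varphi(W)^k N(\log R)^k W^{-(k+1)} I_k(F)$.

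The main obstacle I anticipate is bookkeeping the Euler-product correction caused by the coprimality condition $(r_i,U)=1$. Because $U$ here omits the finitely many primes dividing $\Delta_L$ rather than including all small primes, one must verify that the local factors at those omitted primes combine into a bounded multiplicative correction that can be absorbed into the $(1+o(1))$ term. Since $\Delta_L$ is fixed while $U \to \infty$ on the scale $\exp(O(\log\log\log N))$, the omitted local factors are bounded and convergent, and this is precisely the bookkeeping that Thorner \cite{T14} carries out for Chebotarev classes. Because $S_1$ involves no level-of-distribution input (that hypothesis enters only for $S_2$), the argument transfers to our setting verbatim.
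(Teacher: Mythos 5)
The paper itself gives no proof of this proposition; it is cited directly from Thorner \cite{T14}, whose argument is in turn the standard Maynard--Tao evaluation of $S_1$ (Maynard's Lemma 5.1) with the single modification that the Selberg weights are restricted to tuples coprime to a modulus $U$ that omits the primes dividing $\Delta_L$. Your outline follows exactly that path: the Maynard change of variables, CRT plus admissibility, absorbing $O(R^2)$ errors, Selberg diagonalization, and a Mellin/Perron (or prime-by-prime) evaluation of the resulting diagonal sum, together with the correct observation that the missing local factors at $p\mid\Delta_L$ are finitely many, bounded, and get absorbed into $(1+o(1))$, and that $S_1$ needs no Bombieri--Vinogradov input. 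So the approach is essentially the same as the cited one and the conclusion is right.

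One point in your write-up is, however, stated incorrectly and would not survive a careful referee. The identity you call the ``classical Selberg diagonalization identity,''
\[
\sum_{d\mid r,\ e\mid s}\mu(d)\mu(e)\frac{de}{[d,e]}=\mathbf{1}_{r=s},
\]
is false: for squarefree $r,s$ one actually has
\[
\sum_{d\mid r,\ e\mid s}\mu(d)\mu(e)\frac{de}{[d,e]}=\varphi(r)\,\mathbf{1}_{r=s},
\]
as one checks on a single prime $p\mid r=s$, where the local sum is $1-1-1+p=\varphi(p)$. More to the point, this is not the identity the argument actually uses: the diagonalization in Maynard/Thorner proceeds by writing $de/[d,e]=\gcd(d,e)=\sum_{u\mid\gcd(d,e)}\varphi(u)$, which decouples the two $\lambda$-sums into a single square, and then substituting
\[
y_{\mathbf r}=\Big(\prod_i\mu(r_i)\varphi(r_i)\Big)\sum_{\substack{\mathbf d\\ r_i\mid d_i\ \forall i}}\frac{\lambda_{\mathbf d}}{\prod_i d_i},
\]
which is exactly what produces the $\varphi(r_i)$ factors in the denominator of the diagonal sum $\sum_{\mathbf r}y_{\mathbf r}^2/\prod_i\varphi(r_i)$ that you correctly quote at the end. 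So your intermediate identity is inconsistent with your own stated conclusion; with the $\varphi(\gcd)$-convolution written in, the computation closes as you intend and matches Thorner's Proposition 4.
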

$S_2$ relies on the indicator function of $\cP_{C,\alpha,\beta}$, but the estimates only involve the prime number theorem and Bombieri--Vinogradov. Thus, the only term that differs from Thorner's $S_2$ is the density $\delta$, which is now $\alpha^{-1}|C|/|G|$.
\begin{proposition}
\label{prop:S2}
If the primes in $\cP_{C,\alpha, \beta}$ have level of distribution $\theta > 0$, then if $\delta=\frac{|C|}{\alpha|G|}$, we have 
\begin{equation}
    S_2(N, \cP_{C,\alpha, \beta}) = (1+o(1))\delta\varphi(\operatorname{rad}(\Delta_L))\dfrac{\log(R)}{\log(N)}\dfrac{\varphi(W)^kN\log(R)^k}{W^{k+1}}\sum_{i=1}^k J_k^{(i)}(F),
\end{equation}
where $J_k^{(m)}(F)$ is defined as in Definition~\ref{def:integral operators}.
\end{proposition}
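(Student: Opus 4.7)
The plan is to follow the Maynard--Tao computation of $S_2$ from \cite{M15}, with the Chebotarev-specific modifications introduced by Thorner \cite{T14}; the only genuinely new input is that the Chebotarev density $|C|/|G|$ must be replaced throughout by the joint density $\delta=|C|/(\alpha|G|)$, which is exactly what our Bombieri--Vinogradov theorem \cref{thm: chebeattyBV} supplies.

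First I would expand the square in $S_2^{(i)}$ to produce a double sum over squarefree tuples $\bm{d}=(d_1,\ldots,d_k)$, $\bm{e}=(e_1,\ldots,e_k)$ coprime to $U\cdot\operatorname{rad}(\Delta_L)$ with $\prod_j d_j,\prod_j e_j\le R$. After interchanging the order of summation, CRT combines the congruences $d_j,e_j\mid n+h_j$ for $j\ne i$ together with $n\equiv u_0\pmod U$ into a single residue class modulo $q=U\prod_{j\ne i}[d_j,e_j]$. Since $R=N^{\theta/2-\eta}$ with $\theta<\theta_{\cP_{C,\alpha,\beta}}$, we have $q\le U R^2\le N^{\theta-\eta'}$ for a suitable $\eta'>0$, comfortably within the range of \cref{thm: chebeattyBV}. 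The inner sum over $n$ is then a count of primes $p\in\cP_{C,\alpha,\beta}\cap[N+h_i,2N+h_i]$ in a single residue class modulo $q$.

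Next I would apply \cref{thm: chebeattyBV} to replace this count by its main term $\frac{|C|}{\alpha|G|\,\varphi(q)}\bigl(\pi(2N+h_i)-\pi(N+h_i)\bigr)$, with an error that, after being summed against $\lambda_{\bm{d}}\lambda_{\bm{e}}$ (which are bounded in the usual way by $(\log R)^k$), aggregates to a quantity of size $o(S_1)$. The surviving sum over $\bm{d},\bm{e}$ is combinatorially identical to the one treated in \cite[Proposition~5]{T14}, differing only in the density prefactor. Substituting Maynard's explicit weights built from the smooth function $F$ and evaluating the resulting Euler products by the standard contour-shift/Mertens argument then produces, for each $i$, a contribution of the form
\[
(1+o(1))\,\delta\,\varphi(\operatorname{rad}(\Delta_L))\,\frac{\log R}{\log N}\cdot\frac{\varphi(W)^k N(\log R)^k}{W^{k+1}}\,J_k^{(i)}(F).
\]
The factor $\varphi(\operatorname{rad}(\Delta_L))$ arises from comparing the Euler products indexed by $U$ with those indexed by $W$ at the primes dividing $\Delta_L$, exactly as in \cite[Propositions~4--5]{T14}. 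Summing over $i=1,\ldots,k$ yields the stated asymptotic.

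The main obstacle is the careful bookkeeping of the exceptional primes dividing $\Delta_L$: one must verify that replacing $W$ by $U$ in the standard Maynard formulas introduces precisely the correction $\varphi(\operatorname{rad}(\Delta_L))$ and nothing more, and that the aggregated error in \cref{thm: chebeattyBV} remains $o(S_1)$ once summed against the sieve weights. Both verifications parallel Thorner's line by line, so no analytic machinery beyond \cref{thm:chebeatty,thm: chebeattyBV} is required.
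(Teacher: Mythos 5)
Your proposal is correct and follows essentially the same approach as the paper: both reduce the computation to Thorner's Proposition 5 in \cite{T14}, with the sole modification being the replacement of the Chebotarev density $|C|/|G|$ by the joint density $\delta = |C|/(\alpha|G|)$, which is exactly what \cref{thm: chebeattyBV} provides. The paper's own proof is a one-line citation to Thorner; your more detailed sketch of the expand-the-square, CRT, Bombieri--Vinogradov, and Euler-product steps is a faithful unpacking of what that citation entails, and your identification of the $\varphi(\operatorname{rad}(\Delta_L))$ factor as arising from the $U$-versus-$W$ comparison at the ramified primes is precisely the Thorner mechanism the paper is invoking.
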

\begin{proof}
The argument is essentially identical to the proof of Proposition 5 in Thorner~\cite{T14}, except we include the factor of $\alpha^{-1}$ to adjust for the fact that our primes come from a Beatty sequence. 
\end{proof}

\subsection{Pintz's argument}

\begin{theorem}[\cite{P10}, Theorem 5]\label{thm: GTPintz}
Let $\mc{H} = \{h_1,\dots, h_k\}$ be an admissible set. If there exists a set $S(\mc{H})$ and constants $c_1(k), c_2(k) > 0$ with 
\[
P^{-}\Big(\prod_{j = 1}^{k}(n + h_j)\Big) \ge n^{c_1(k)} \text{ for all }n \in S(\mc{H})
\]
and 
\[
\#\{n\le x : n \in S(\mc{H})\} \ge \frac{c_2(k)x}{(\log x)^k}
\]
for all $x$ sufficiently large, then $S(\mc{H})$ contains a $t$-term arithmetic progression for all $t \in \ZZ_{> 0}$.
\end{theorem}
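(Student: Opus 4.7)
The plan is to follow Pintz's strategy of combining Goldston--Pintz--Yildirim (GPY) sieve weights with the Green--Tao transference principle. The two hypotheses on $S(\mathcal{H})$ are engineered so that $S(\mathcal{H})$ sits with positive relative density inside a pseudorandom set of almost-primes, whereupon a relative Szemer\'edi theorem delivers $t$-term arithmetic progressions.

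First, I would construct a nonnegative majorant $\nu : \NN \to \RR_{\ge 0}$ dominating $\one_{S(\mathcal{H})}$ up to a uniform factor. A natural choice is a normalized GPY-type truncated divisor sum
\[
\nu(n) = C_k \Big( \sum_{d \mid \prod_{j=1}^{k}(n+h_j),\, d \le R} \mu(d) \log(R/d) \Big)^2,
\]
with $R = x^{c_1(k)/2 - \varepsilon}$ and $C_k$ chosen so that $\EE_{n \le x}\nu(n) = 1 + o(1)$. Because $P^{-}\bigl(\prod_j (n+h_j)\bigr) \ge n^{c_1(k)} > R$ for $n \in S(\mathcal{H})$, only the trivial divisor $d = 1$ contributes to the inner sum, so $\nu(n) \asymp (\log R)^k$ on $S(\mathcal{H})$. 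Combined with the density lower bound $\#(S(\mathcal{H}) \cap [1,x]) \ge c_2(k) x / (\log x)^k$, this shows that $S(\mathcal{H})$ has positive relative density with respect to the measure determined by $\nu$.

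The next step is to verify that $\nu$ satisfies the linear-forms and correlation conditions of Green--Tao: for any bounded system of affine-linear forms with no two proportional, $\EE_{n \le x}\prod_i \nu(\psi_i(n)) = 1 + o(1)$, and higher moments of $\nu$ are controlled by divisor-type bounds. These estimates follow from Mertens-type asymptotics together with the Bombieri--Vinogradov-type input that underlies GPY-style sieves, applied to the linear forms $n + h_j$. Once pseudorandomness of $\nu$ is established, the Green--Tao relative Szemer\'edi theorem immediately yields arithmetic progressions of arbitrary length $t$ inside any subset of positive $\nu$-density, and in particular inside $S(\mathcal{H})$.

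The main analytic obstacle will be verifying the pseudorandomness conditions for the GPY majorant, which requires uniform estimates on correlations of truncated divisor sums along independent affine forms; this is where the full weight of the argument sits and where Pintz's innovation lies. A secondary combinatorial point is that the transference step implicitly works with the enlarged tuple $\{h_i + jd\}_{i,j}$ arising from the progression, so one must invoke a pigeonhole argument to fix a common difference $d$ for which the admissibility of this larger tuple is preserved.
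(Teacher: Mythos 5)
This theorem is \emph{cited} by the paper (as Theorem~5 of Pintz \cite{P10}), not proved in it, so there is no in-paper argument to compare against. Taken on its own terms, your sketch captures the correct strategy of Pintz's proof: build the Green--Tao-style GPY truncated-divisor-sum majorant $\nu$, use the hypothesis $P^{-}\big(\prod_j(n+h_j)\big)\ge n^{c_1(k)}>R$ to force the inner sum to collapse to $d=1$ on $S(\mathcal{H})$, deduce that $\nu$ is large (of order $(\log R)^k$ after normalization) precisely on $S(\mathcal{H})$, combine with the hypothesis $\#\{n\le x:n\in S(\mathcal{H})\}\gg x/(\log x)^k$ to get positive $\nu$-relative density, verify the linear-forms and correlation conditions for $\nu$, and invoke relative Szemer\'edi. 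This is indeed how Pintz packages the Green--Tao transference into an abstract criterion; modulo the unstated power of $\log(R/d)$ (the $k$-tuple GPY weight should carry $(\log (R/d))^{k+\ell}$ for the stated $(\log R)^k$ gain to come out cleanly), the outline is sound.

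Your final paragraph, however, introduces a step that is not part of the argument. The relative Szemer\'edi theorem produces a $t$-term arithmetic progression inside the subset $S(\mathcal{H})$ itself; at that stage there is no ``enlarged tuple'' $\{h_i+jd\}_{i,j}$ to worry about and no admissibility that could be lost, because each term of the progression is simply a value of $n$ already known to lie in $S(\mathcal{H})$ (hence to have the almost-prime and many-prime properties attached to the original tuple $\mathcal{H}$). No pigeonhole over common differences $d$ is needed or used in Pintz's proof; that concern would only arise in a different formulation where one tries to directly prescribe the shape of the union of shifted tuples, which is not the route taken here. Deleting that paragraph leaves an accurate summary of Pintz's argument.
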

To show that our collection of primes can satisfy these criteria, we need to adapt some lemmas from the work of Vatwani and Wong \cite{VW17}.
For corrections of some of the arguments in \cite{VW17}, see \cite{GKPPS19}.
\begin{proposition}\label{proposition: GTLem1}
Suppose we have an admissible set $\mathcal{H} = \{h_1,\dots, h_k\}$ with 
\[
k \gg \exp\Big(\frac{2\alpha |G|\Delta_Lm}{|C|\theta \varphi(\Delta_L)}\Big).
\]
Then there are infinitely many $n$ such that at least $m + 1$ of the $n + h_i$ lie in $\mc{P}_{C,\alpha,\beta}$ and 
\[
P^{-}\Big(\prod_{i = 1}^{k}(n+h_i)\Big) \ge n^{c_1(k)}.
\]
\end{proposition}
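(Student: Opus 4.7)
The plan is to run a Maynard--Tao sieve argument on $\mc{P}_{C,\alpha,\beta}$, exploiting the level of distribution $\theta = \theta_{\mc{P}_{C,\alpha,\beta}}$ established in \cref{thm: chebeattyBV} together with the sums $S_1$, $S_2$ defined above. I take the sieve weights $\lambda_{d_1,\dots,d_k}$ to be those of Maynard, built from a smooth test function $F$ supported on the simplex $\cR_k$ and restricted to squarefree $k$-tuples with $\prod_i d_i \le R = N^{\theta/2 - o(1)}$ and each $d_i$ coprime to $U\cdot \Delta_L$. The coprimality to $\Delta_L$ is crucial because \cref{thm: chebeattyBV} only controls moduli $q$ satisfying $L\cap\QQ(\zeta_q)=\QQ$; this restriction is the source of the factor $\varphi(\operatorname{rad}(\Delta_L))$ appearing in \cref{prop:S2}.

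Given these weights, combining the $S_1$ asymptotic with \cref{prop:S2} yields
\[
\frac{S_2(N, \mc{P}_{C,\alpha,\beta})}{S_1(N, \mc{P}_{C,\alpha,\beta})} = (1+o(1))\,\delta\,\varphi(\operatorname{rad}(\Delta_L))\,\frac{\log R}{\log N}\, \frac{\sum_{i=1}^k J_k^{(i)}(F)}{I_k(F)},
\]
with $\delta = |C|/(\alpha|G|)$. Positivity of the weights then guarantees that whenever this ratio exceeds $m+1$ for some admissible $F$, at least $m+1$ of the $n+h_i$ must lie in $\mc{P}_{C,\alpha,\beta}$ for infinitely many $n$ in the residue class $n\equiv u_0\pmod{U}$. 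Using Maynard's bound $M_k := \sup_F \sum_{i=1}^{k} J_k^{(i)}(F)/I_k(F)\ge \log k - 2\log\log k - 2$ together with $\log R/\log N \sim \theta/2$, the required inequality becomes
\[
\log k \;\gtrsim\; \frac{2(m+1)\alpha|G|}{|C|\,\theta\,\varphi(\operatorname{rad}(\Delta_L))},
\]
which, after bounding $\varphi(\operatorname{rad}(\Delta_L))$ from below in terms of $\varphi(\Delta_L)/\Delta_L$ and absorbing the universal constants from the $M_k$ estimate, gives the stated hypothesis $k\gg \exp\paren{\frac{2\alpha|G|\Delta_L m}{|C|\theta\varphi(\Delta_L)}}$.

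To enforce the primality bound $P^{-}\paren{\prod_{i=1}^k (n+h_i)}\ge n^{c_1(k)}$, I would follow Pintz \cite{P10} and Vatwani--Wong \cite{VW17}: augment the sieve so that the weights are supported only on $(d_1,\dots,d_k)$ whose prime divisors all exceed $n^{c_1(k)}$. Combined with the congruence condition $n\equiv u_0 \pmod{U}$, which removes prime factors below $\log\log\log N$, this forces $P^{-}(\prod_i(n+h_i))\ge n^{c_1(k)}$. The main obstacle is the careful bookkeeping needed to re-derive the asymptotics of $S_1$ and $S_2$ with this extra truncation in place: one must verify that enforcing the additional coprimality does not disrupt the Bombieri--Vinogradov input from \cref{thm: chebeattyBV} and that the main terms lose at most a bounded multiplicative factor. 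Choosing $c_1(k)$ sufficiently small (depending only on $\theta$ and $k$) controls this loss, so the ratio argument still produces infinitely many $n$ satisfying both conclusions simultaneously.
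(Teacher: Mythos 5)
Your proposal reconstructs essentially the same argument the paper invokes: the paper's proof of this proposition simply says "follows the same reasoning as [VW17, Theorem~5.4], with the only adjustment coming from the different density factor coming from the formula in \cref{prop:S2}," and your sketch — running Maynard--Tao sieve with $R = N^{\theta/2 - o(1)}$, comparing $S_2/S_1$ against $m+1$ via Maynard's lower bound on $M_k$, absorbing the $\varphi(\operatorname{rad}(\Delta_L))$ into $\varphi(\Delta_L)/\Delta_L$, and grafting on the Pintz/Vatwani--Wong truncation to force $P^-(\prod_i(n+h_i)) \ge n^{c_1(k)}$ — is precisely the content of that cited argument. The route and key inputs (\cref{thm: chebeattyBV}, \cref{prop:S2}) match what the paper uses, so there is no meaningful divergence.
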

\begin{proof}
The proof follows the same reasoning as \cite[Theorem 5.4]{VW17}, with the only adjustment coming from the different density factor coming from the formula in \cref{prop:S2}.
\end{proof}

\begin{theorem}\label{thm: GTsuffmanyelts}
Assume the notation and hypotheses of \cref{proposition: GTLem1}. For sufficiently small chosen $c_1(k)$, let 
\[
S_{\mc{P}_{C,\alpha,\beta}}(\mathcal{H}) = \Big\{n \in \NN : \text{ at least } m + 1 \text{ of the $n+h_i$'s are in }\mc{P}_{C,\alpha,\beta}, \quad P^{-}\Big(\prod_{i = 1}^{k}(n + h_i)\Big) \ge n^{c_1(k)}\Big\}
\]
Then $\#\{n \le x: n \in S_{\mathcal{P}_{C,\alpha,\beta}}(\mc{H}) \ge c_{\mathcal{P}_{C,\alpha,\beta}}(k)x/(\log x)^k\}$ for some constant 
$
c_{\mathcal{P}_{C,\alpha,\beta}}
 > 0$.
\end{theorem}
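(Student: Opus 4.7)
The plan is to upgrade the qualitative infinitude statement of Proposition~\ref{proposition: GTLem1} to a quantitative density lower bound by extracting a count from the Maynard--Tao sieve sum and then imposing the large prime factor condition via an auxiliary sieve. This follows the Pintz framework \cite{P10} as adapted to Chebotarev primes by Vatwani--Wong \cite{VW17} (with the corrections in \cite{GKPPS19}); the only arithmetic input beyond Section~\ref{GTCB} is the level of distribution of Theorem~\ref{thm: chebeattyBV} and a uniform pointwise bound on the sieve weights.

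First I would work in a dyadic window $[N, 2N)$ with the residue class $n \equiv u_0 \pmod{U}$ and the Selberg weights from Section~\ref{GTCB}, taking $R = N^{\theta/2 - \delta}$. The computations of $S_1$ and $S_2^{(m)}$ from Proposition~\ref{prop:S2}, together with the hypothesis on $k$, already yield (as in the proof of Proposition~\ref{proposition: GTLem1}) the positivity
\[
T(N) := \sum_{\substack{N \le n < 2N \\ n \equiv u_0 \pmod U}} \Big( \sum_{i=1}^{k} \chi_{\mc{P}_{C,\alpha,\beta}}(n+h_i) - m \Big) \omega(n)^2 \;\gg\; \frac{N(\log R)^k \varphi(U)^k}{U^{k+1}},
\]
where $\omega(n) := \sum_{d_i \mid n+h_i} \lambda_{d_1,\ldots,d_k}$. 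Only $n$ with at least $m+1$ shifts in $\mc{P}_{C,\alpha,\beta}$ contribute positively, and Maynard's standard uniform bound \cite[Lemma~6.2]{M15} gives $|\omega(n)| \ll (\log R)^k$, so each such $n$ contributes at most $k(\log R)^{2k}$ to $T(N)$. Comparing the two bounds yields
\[
\#\big\{ n \in [N, 2N) : n \equiv u_0 \pmod{U},\ \#\{i : n+h_i \in \mc{P}_{C,\alpha,\beta}\} \ge m+1 \big\} \;\gg\; \frac{N}{U^{k+1}(\log N)^k} \;\gg\; \frac{N}{(\log N)^k},
\]
since $U \ll (\log \log \log N)^{O(1)}$.

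Lastly, to enforce the condition $P^{-}\bigl(\prod_i (n+h_i)\bigr) \ge n^{c_1(k)}$ required by Theorem~\ref{thm: GTPintz}, I would insert into the sieve an auxiliary upper bound factor killing those $n$ for which some $n+h_i$ has a prime divisor in $(U, N^{c_1(k)}]$. For $c_1(k)$ sufficiently small depending on $k$, Brun--Titchmarsh bounds the excluded subcount by a small constant times the main term from the previous paragraph, so the lower bound $\gg N/(\log N)^k$ per dyadic window survives; dyadic summation over $N = 2^j \le x$ then produces $\gg x/(\log x)^k$. The main obstacle is the calibration in this last step: $c_1(k)$ must be chosen small enough that the auxiliary sieve loss is absorbed into the implied constants of the second paragraph while remaining strictly positive, so that Pintz's theorem applies. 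The argument is entirely parallel to \cite[\S 5]{VW17} and \cite{GKPPS19}, which transfer verbatim because the only analytic input is the Bombieri--Vinogradov estimate of Theorem~\ref{thm: chebeattyBV} together with the sieve computations already assembled in Section~\ref{GTCB}.
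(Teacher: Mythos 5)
Your proposal takes essentially the same route as the paper: both reduce to Vatwani--Wong's Theorem~5.5 \cite{VW17} (with the corrections in \cite{GKPPS19}), running the Maynard--Tao sieve sums $S_1$, $S_2$ of Section~\ref{GTCB} in an admissible residue class, extracting a positive-density count via a pointwise bound on the Selberg weights, and enforcing the large-prime-factor condition for Pintz's criterion with an auxiliary upper-bound sieve. The paper's own proof is literally a one-line citation to VW17 together with the modified $S_2$ from \cref{prop:S2} and \cref{proposition: GTLem1}, so your fleshed-out sketch is a more detailed rendering of the identical strategy.

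One point in your intermediate bookkeeping is not right as stated. You assert $U \ll (\log\log\log N)^{O(1)}$, but $U = \prod_{p \le \log\log\log N,\, p\nmid\Delta_L} p$, so by the prime number theorem $U = (\log\log N)^{1+o(1)}$, which tends to infinity with $N$. Consequently the step
\[
\frac{N}{U^{k+1}(\log N)^k} \gg \frac{N}{(\log N)^k}
\]
does not hold --- the left side is $o\big(N/(\log N)^k\big)$, whereas Pintz's criterion in \cref{thm: GTPintz} requires a lower bound of the form $c_2(k)\,x/(\log x)^k$ with $c_2(k)$ an absolute constant. This is a genuine wrinkle in passing from the Maynard--Tao sieve (which runs a $W$-trick with modulus tending to infinity) to Pintz's density requirement; it is exactly the kind of accounting that the VW17 argument, which you are implicitly deferring to, carries out carefully rather than waving away by absorbing $U$ into the implied constant. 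Since you do cite VW17 as the source of the overall argument, this is best read as an imprecision in your gloss rather than a wrong approach, but you should not claim that the $U^{k+1}$ factor is harmless on its own.
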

\begin{proof}
The same proof as for \cite[Theorem 5.5]{VW17} applies, using in this case the slightly altered $S_2$ and \cref{proposition: GTLem1}.
\end{proof}
Using these two intermediate results alongside \cref{thm: GTPintz} we can prove \cref{corollary: GreenTaoMain}.
\begin{proof}[Proof of \cref{corollary: GreenTaoMain}]
We wish to apply the result of Pintz to the set
\[
S_{\mc{P}_{C,\alpha,\beta}}(\mathcal{H}') = \Big\{n \in \NN :  n + h_j' \in \mc{P}_{C,\alpha,\beta}\,\, \forall\, 1 \le j \le m+1, \, P^{-}\Big(\prod_{i = 1}^{k}(n + h_i)\Big) \ge n^{c_1(k)}\Big\},
\]
where $\mathcal{H}' \subseteq \mc{H}$ is some $(m+1)$-element subset. For Pintz's two criteria, the first criterion that the $n+h_i$ have large prime factors follows by definition. For the second criterion, first note  that there are only finitely many $(m+1)$-element subsets of $\mathcal{H}$. \cref{thm: GTsuffmanyelts} gives the desired result in the case that we do not specify which elements satisfy $n + h_i \in \cP_{C,\alpha,\beta}$.  The conclusion of \cref{thm: GTsuffmanyelts} must hold for some particular $(m+1)$-element subset $\mc{H}' \subseteq \mc{H}$ for a smaller constant $c_{\cP_{C,\alpha,\beta}}$, since we need to divide by the number of $(m+1)$-element subsets. Then applying Pintz's result, the corollary follows.
\end{proof}

\printbibliography
\end{document}